\documentclass[12pt,a4paper]{article}

\newif\ifrelease
\releasetrue

\ifrelease
	\newcommand{\MARK}[1]{}
\else
	\newcommand{\MARK}[1]{{\color{red}#1}}
	\usepackage[notref,notcite]{showkeys}
	\renewcommand{\showkeyslabelformat}[1]%
   		{{\!\!{\color{blue}\tiny\sffamily#1}\!\!}}
\fi 

\newcommand{\drop}[1]{}

\usepackage[margin=25.4mm]{geometry} 
\usepackage[utf8]{inputenc}  
\usepackage{xcolor,scrtime,graphicx,pinlabel}
\usepackage{amsmath,amsfonts,relsize}
\usepackage{mathrsfs}
\usepackage{amsthm}
\usepackage{booktabs}
\usepackage{tikz}
	\usetikzlibrary{decorations.pathmorphing,calc}
\usepackage{float}   

\newcommand{\pdelta}{\bm{\updelta}}  

\usepackage{enumitem}
  \setlist{noitemsep}

\usepackage{dsfont}
\newcommand{\bONE}{\mathds{1}}

\newtheorem*{answer}{Basic Answer}

\newtheorem{theorem}{Theorem}[section]
\newtheorem{lemma}[theorem]{Lemma}
\newtheorem{definition}[theorem]{Definition}

\newtheorem{remark}[theorem]{Remark}

\newtheorem{Ftheorem}[theorem]{Formal Theorem}

\theoremstyle{definition}

\usepackage{thmtools}
\declaretheoremstyle[notefont=\bfseries,
spacebelow=\bigskipamount]{boldnote}
\declaretheoremstyle[%
spacebelow=\bigskipamount]{normalnote}
\declaretheoremstyle[spacebelow=\bigskipamount]{default}

\declaretheorem[style=normalnote,name=Definition,numberlike=theorem,qed={\lower-0.3ex\hbox{$\square$}}]{definition}

\declaretheorem[style=normalnote,name=Remark,numberlike=theorem,qed={\lower-0.3ex\hbox{$\square$}}]{remark}

\usepackage{textcomp}
\theoremstyle{definition}

\usepackage{mdframed}
\theoremstyle{definition}
\newtheorem{examplex}{Example}

\newenvironment{Ex}
{\pushQED{\qed}%
	\begin{mdframed}[
	innertopmargin=-1pt,
	linewidth=1pt,linecolor=gray!50]\examplex}%
{\popQED\endexamplex\end{mdframed}}

\newcommand{\ti}{{\times}}

\usepackage[rm={oldstyle=false}]{cfr-lm} 
\usepackage{fixcmex} 
\usepackage{bold-extra} 
\usepackage{xspace}
\newcommand\generic{\textsc{Generic}\xspace}
\let\Generic\generic

\numberwithin{equation}{section}
\usepackage{bm} 

\usepackage{upgreek}

\let\ds\displaystyle
\let\wh\widehat
\def\R{{\mathbb R}} \def\N{{\mathbb N}}

\def\dd{\;\!\mathrm{d}} 
\def\ee{\mathrm{e}}     

\newcommand{\Lebesgue}{\mathscr L}
\newcommand{\RelEnt}{\calH}
\DeclareMathOperator\Prob{Prob}
\newcommand{\e}{\varepsilon}
\renewcommand\div{\mathop{\mathrm{div}}\nolimits}
\newcommand{\ProbMeas}{\calP}
\newcommand{\Expectation}{\bbE}
\renewcommand{\Re}{\mathop{\mathrm{Re}}}
\DeclareMathOperator{\supp}{supp}
\newcommand{\wt}{\widetilde}
\DeclareMathOperator{\dom}{dom}

\DeclareFontFamily{U}{mathx}{}
\DeclareFontShape{U}{mathx}{m}{n}{<-> mathx10}{}
\DeclareSymbolFont{mathx}{U}{mathx}{m}{n}
\DeclareMathAccent{\widecheck}{0}{mathx}{"71}

\usepackage{mathtools} 
\DeclarePairedDelimiter{\abs}{\lvert}{\rvert}

\DeclarePairedDelimiter{\bra}{(}{)}
\DeclarePairedDelimiter{\pra}{[}{]}
\let\set\undefined
\DeclarePairedDelimiter{\set}{\{}{\}}

\usepackage{ifthen}
\newlength{\leftstackrelawd}
\newlength{\leftstackrelbwd}
\def\leftstackrel#1#2{\settowidth{\leftstackrelawd}%
{${{}^{#1}}$}\settowidth{\leftstackrelbwd}{$#2$}%
\addtolength{\leftstackrelawd}{-\leftstackrelbwd}%
\leavevmode\ifthenelse{\lengthtest{\leftstackrelawd>0pt}}%
{\kern-.5\leftstackrelawd}{}\mathrel{\mathop{#2}\limits^{#1}}}

 \def\calE{{\mathcal E}} \def\calF{{\mathcal F}}
 \def\calH{{\mathcal H}}

\def\calP{{\mathcal P}}  \def\calR{{\mathcal R}}
\def\calS{{\mathcal S}}  
  \def\calX{{\mathcal X}}
\def\calY{{\mathcal Y}} 
  
\def\rmd{{\mathrm d}}

\def\rmD{{\mathrm D}}

\def\sfp{{\mathsf p}} \def\sfq{{\mathsf q}}

 \def\sfZ{{\mathsf Z}}

 \def\bbE{{\mathbb E}}

\def\bbS{{\mathbb S}}

  \def\bfX{{\mathbf X}} 
 \def\bfZ{{\mathbf Z}}

\def\scrJ{{\mathscr  J}}

\def\scrS{{\mathscr  S}}

\usepackage[hyphens]{url}
\usepackage[colorlinks=true, linkcolor=black, citecolor=black, urlcolor=black,hypertexnames=false]{hyperref}

\begin{document}

\title{Why does entropy drive evolution equations?}

\author{Mark A. Peletier}


\maketitle

\begin{abstract}
	`Entropy' appears as driving force in many different evolution equations, both deterministic and stochastic, and in these equations this `entropy' also takes different forms. We show how all these examples can  be understood as different instances of a common principle: Entropy drives evolutions because it characterizes the invariant measure of an underlying stochastic process. This interpretation explains the appearance of entropy, the different forms that entropy takes in these equations, and how entropy `drives' these evolution equations.
	We illustrate this common structure with examples from stochastic processes, gradient flows, and \Generic systems. 
	
	\medskip
	\noindent
	\textbf{Keywords:} Entropy, gradient flows, GENERIC systems, coarse-graining, large deviations, stochastic processes, invariant measures, dissipation, Onsager operator

	\medskip\noindent
	\textbf{MSC classification: }
35Q84, 
37L05, 
60F10, 
60H10, 
60K35, 
82C31, 
70H99, 
\end{abstract}

\section{Introduction}
\label{se:central_question}

\subsection{The basic question}
\label{ss:basic-question}
This paper starts from the observation that many evolution equations are `driven' by something that people call `entropy'---we give examples below. The aim of this paper is to discuss exactly what this means:  what it means to be called an `entropy',  and why this object `drives' some evolution.

\medskip

We first give a mathematical definition of `driving an evolution'. In this paper we use the \textsl{metriplectic} or \textsl{\Generic} framework, which is an axiomatic description of a class of evolution equations that includes both Hamiltonian systems and gradient flows. We succinctly describe this framework here, with details relegated to Appendix~\ref{s:generic}, and we illustrate the framework in the examples below.

Given a state space $\bfZ$ with states $z\in \bfZ$, metriplectic or \Generic evolution equations are of the form
\begin{equation}
	\label{eq:GENERIC}
	\dot z = J(z) \rmD \calE(z) + K(z) \rmD \calS(z).
\end{equation}
Here $\calE,\calS:\bfZ\to \R$ are functionals, $\rmD$ indicates differentiation, and for each $z$, $J(z)$ and~$K(z)$ are linear operators satisfying (using matrix notation)
\begin{equation}
	\label{eq:JK-symmetry-intro}
	J(z)^\top = -J(z)\qquad  \text{and} \qquad K(z)^\top = K(z)\geq0.
\end{equation}
The operator $J$ is commonly called the \emph{Poisson} operator, and $K$ the \emph{Onsager} operator. 
In addition, the following \emph{non-interaction conditions} are required to hold:
\begin{equation}
	\label{eq:NIC-intro}
	J(z)\rmD \calS(z) = 0 \qquad \text{and} \qquad K(z)\rmD \calE(z) = 0.
\end{equation}

In the \Generic and metriplectic literature the functionals $\calE$ and $\calS$ are generally called `energy' and `entropy'. The symmetry and non-interaction properties of $J$ and $K$ imply that along the evolution~\eqref{eq:GENERIC} the functional~$\calE$ is conserved, and $\calS$ increases:
\begin{align*}
	\frac{\dd}{\dd t} \calE(z(t)) = \rmD \calE \cdot J\rmD \calE + \rmD \calE \cdot K\rmD\calS \ \stackrel{(\ref{eq:JK-symmetry-intro},\ref{eq:NIC-intro})}  = \ 0,\\
	\frac{\dd}{\dd t} \calS(z(t)) = \rmD \calS \cdot J\rmD \calE + \rmD \calS \cdot K\rmD\calS \ \stackrel{(\ref{eq:JK-symmetry-intro},\ref{eq:NIC-intro})}\geq \ 0.
\end{align*}
A special case is that of a \emph{gradient flow}\footnote{Note that gradient flows come with different signs; we follow the Physics convention, in which an entropy tends to increase.}, which is a \generic system in which $\calE\equiv 0$:
\begin{equation}
	\label{eq:GF}
	\dot z =  K(z) \rmD \calS(z).
\end{equation}
In the context of this paper, therefore, we say that in an equation of the form~\eqref{eq:GENERIC} the functionals $\calE$ and $\calS$ \emph{drive} the evolution, and in particular we focus on the  driving by~$\calS$.

\bigskip
We now return to the main question of this paper, 
\begin{quote}
	\emph{Q0. Why does `entropy' appear as driver in many evolution equations?}
\end{quote}
or more precisely, 
\begin{quote}
\emph{Q0. Why are there many evolution equations of the form~\eqref{eq:GENERIC} or~\eqref{eq:GF} in which people call $\calS$ an `entropy'?}
\end{quote}

Part of the difficulty here is that `entropy' tends to be a confusing concept. There is no single definition, only many different instances, in contrast to e.g.\ a vector space or a measurable function. It also does not help that things called `entropy' appear in many different forms and roles in the literature. 
Nonetheless, `entropy' is often \emph{said} to drive evolution equations, as illustrated by a Google Scholar search for `gradient flow of the entropy' that  produces more than 250 hits. 

\newcounter{entropynumber}
\renewcommand\theentropynumber{(\roman{entropynumber})}
\newcommand\num{\refstepcounter{entropynumber}\theentropynumber}
\begin{table}[t]
	\small
	\centering
\begin{tabular}{rlll}
\toprule
&Formula & Context & See \\
\midrule
\num&$\calS(Q,P,e) = \beta e$ & Damped 	 oscillator & Ex.~\ref{ex:damped-harmonic-oscillator}, Sec.~\ref{ss:CG-CGinvmeas-entropy-ExA}\\[\jot]
\num&$\calS(Q,P,\theta) = \log \theta$ & Damped 	 oscillator & Ex.~\ref{ex:damped-harmonic-oscillator}, \cite{JungelStefanelliTrussardi21}\\
\num\label{ent:diffusion}&$\ds\calS(\rho) = -\int\rho(x)\log \rho(x)\dd x$  & Diffusion & Ex.~\ref{ex:JKO}, Sec~\ref{ss:relative-entropies-Sanov}\\
\num\label{ent:FP}&$\ds\calS(\rho) = -\int\rho(x)\bra[\big]{\log \rho(x) + V(x)}\dd x$  & Fokker-Planck equation & Ex.~\ref{ex:JKO}, Sec~\ref{ss:relative-entropies-Sanov}\\
\num\label{ent:ZRP}&$\ds\calS(\rho) = -\int s(\rho(x))\dd x$, \ $s$ non-explicit  & Zero-range process & Ex.~\ref{ex:DSZ}, Sec.~\ref{ss:interacting-particles}\\
\num\label{ent:hard-rods}&$\ds\calS(\rho) = -\int \rho(x) \log \frac{\rho(x)}{1-a\rho(x)}\dd x$  &  {Hard-rod} system & Ex.~\ref{ex:DSZ}, Sec.~\ref{ss:interacting-particles}\\
\num\label{ent:heat-conduction}&$\ds\calS(\rho) = \int \log \rho(x)\dd x$  & Heat conduction & Ex.~\ref{ex:DSZ}, Sec.~\ref{ss:interacting-particles}\\
\num&$\ds\calS(y) =  \log \int \pdelta[\Phi(x)-y](\rmd x)$ & Coarse-graining & Sec.~\ref{s:core-example}, \eqref{eqdef:CG-example-S}\\
\bottomrule
\end{tabular}
\caption{A number of different formulas for entropies that drive evolution equations. See the referenced sections for details.}
\label{table:different-entropies}
\end{table}

\bigskip
The aim of this paper is to formulate an answer to Question Q0 above, and we will naturally touch on various  related questions such as
\begin{enumerate}[label=Q\arabic*]
\item \label{q:what-is-entropy-from-modelling-perspective}
What does it mean to be called an `entropy'? 
How should one interpret this from a modelling perspective?
\item \label{q:why-entropy-in-so-many-forms}
Why does `entropy' come in so many different functional forms (see Table~\ref{table:different-entropies})?
\item\label{subq:increasing} Does `entropy' always increase along an evolution? 
\item \label{q:modelling-interpretation-K} Given that the operator $K$ characterizes \emph{how} $\calS$ drives the evolution in~\eqref{eq:GENERIC}, what is the modelling interpretation of $K$?
\item\label{q:interp-E-J} In the same vein, what is the modelling interpretation of $\calE$ and $J$?
\item \label{q:unique-char-of-GENERIC} 
Does the equation uniquely characterize $\calE$, $\calS$, $J$, and $K$?
\end{enumerate}

\subsection{Example: Damped oscillator}
\label{ss:damped-oscillator-intro}

We now discuss a first example of a system that is driven by an entropy in this sense.

\medskip

\begin{Ex}[The damped harmonic oscillator as a \generic system]

\label{ex:damped-harmonic-oscillator}

The sim\-plest example of a \generic system~\eqref{eq:GENERIC} is the damped harmonic oscillator~\cite{Ottinger18}. Here the state variable is $(Q,P,e)\in \R^3$, which describes the position $Q$ and momentum~$P$ of an object of mass $m$ connected to a spring with constant $k$ and a damper with constant~$\gamma$; the variable $e$ has the interpretation of an energy, as we discuss below. 

\begin{center}
\begin{tikzpicture}[scale=1.2,line cap=round]

\draw[thick] (0,0.2) -- (0,1.8);
\foreach \y in {0.2, 0.4, 0.6,...,1.8} {
  \draw[thick] (-0.2,\y) -- (0,\y+0.2);
}

\draw[thick] (0,1) -- (1,1);
\draw[thick] (1,0.5) -- (1,1.5);

\begin{scope}[xshift=1cm]
\draw[thick] (0,1.5) -- (0.3,1.5);
\draw[thick,decorate,decoration={aspect=0.4, segment length=3mm, amplitude=2mm,coil}] 
  (0.3,1.5) -- (1.7,1.5) node[midway, above, shift={(0,0.25)}] {$k$};
  \draw[thick] (1.7,1.5) -- (2,1.5) coordinate (springR);

\draw[thick] (0,0.5) -- (0.7,0.5); 

\draw[thick] (0.7,0.3) -- (1.3,0.3) node[below, midway, shift={(0,-0.05)}] {$\gamma$};     
\draw[thick] (0.7,0.7) -- (1.3,0.7);     
\draw[thick] (0.7,0.3) -- (0.7,0.7);     

\draw[thick] (1.0,0.32) -- (1.0,0.68);   
\draw[thick] (1.0,0.5) -- (2,0.5);       
\draw[thick] (1.0,0.5) -- (2,0.5) coordinate (A); 
\draw[thick] (1.0,0.3) -- (1.0,0.7);

\draw[thick] (springR) -- (A);
\draw[thick] ($ (A)!0.5!(springR) $) -- (3,1);

\begin{scope}[xshift=1cm]
\draw[thick,fill=gray!20] (2,0.7) rectangle (3,1.3);
\node at (2.5,1) {$m$};

\draw[->,thick] (3.2,1) -- (3.9,1)node[midway,above] {$Q$} ;
\end{scope}

\end{scope}
\end{tikzpicture}
\end{center}
The variables $Q$, $P$, and $e$  satisfy the evolution equations
\begin{equation}
	\label{eq:ODE-harmonic-oscillator}
	\dot Q = \frac Pm , \qquad 
	\dot P = - kQ - \gamma \frac Pm,\qquad \text{and}\qquad
	\dot e = \gamma \frac{P^2}{m^2}.
\end{equation}
We recoginize the first equation as the relationship between velocity $\dot Q$ and momentum~$P$, and the second equation as the force balance connecting the momentum change $\dot P$ to the total force $-kQ -\gamma P/m$. This force consists of  a conservative restoring force $-kQ$, corresponding to the elastic energy $kQ^2/2$ of a spring, and a dissipative force $-\gamma P/m$ corresponding to linear friction with parameter $\gamma>0$. 

The friction term $-\gamma P/m$ causes the total mechanical energy to decay along a solution of~\eqref{eq:ODE-harmonic-oscillator}:
\[
\frac{d}{dt} \bra*{\frac{P(t)^2}{2m} + \frac k2 Q(t)^2 } 
= \frac Pm \bra*{- kQ - \gamma \frac Pm} + kQ \, \frac Pm 
= -\gamma \frac{P^2}{m^2}\leq0.
\]
This observation allows us to interpret $e$, with equation $\dot e = \gamma P^2/m^2$,  as the total amount of mechanical energy that has been `lost' due to friction since the start of the evolution. Put differently, if we define a `total energy'
\[
\calE(Q,P,e) := \frac{P^2}{2m} + \frac k2 Q^2 + e, 
\]
then this total energy is conserved:
\[
\frac{\dd}{\dd t} \calE\bra[\big]{Q(t),P(t),e(t)}
=  0.
\]

We can now write equation~\eqref{eq:ODE-harmonic-oscillator} in the form of a \generic equation~\eqref{eq:GENERIC} as follows:
\begin{equation}
	\label{eq:Generic-ODE-damped-harmonic-oscillator}
\frac{\dd}{\dd t} \begin{pmatrix}
	Q \\ P \\e
\end{pmatrix}
= 
\underbrace{\begin{pmatrix}
	0 & 1 & 0 \\ 
	-1& 0 & 0 \\
	0 & 0 & 0
\end{pmatrix}}_{J}
\underbrace{\begin{pmatrix}
	kQ \\ P/m \\ 1
\end{pmatrix}}_{\rmD\calE}
+
\underbrace{\frac\gamma\beta \begin{pmatrix}
	0 &    0 & 0\\
	0 &    1 & -P/m\\
	0 & -P/m & P^2/m^2\\
\end{pmatrix}}_{K}
\underbrace{\begin{pmatrix}
	0 \\ 0 \\ \beta 
\end{pmatrix}}_{\rmD \calS}.
\end{equation}
Here $\beta>0$ is a parameter that we discuss in Section~\ref{s:damped-pendulum}, and we have made a choice for the `entropy' $\calS$,
\[
\calS(Q,P,e) := \beta e.
\]
These choices for $\calE$, $\calS$, $J$, and $K$ indeed satisfy the requirements~\eqref{eq:JK-symmetry-intro} and~\eqref{eq:NIC-intro}. As already remarked above we therefore find that $\calS$ increases along an evolution:
\[
\frac{\dd}{\dd t} \calS\bra[\big]{Q(t),P(t),e(t)} = \beta  \dot e 
= \beta \gamma \frac{P^2}{m^2}\geq  0.\qedhere
\]
This behaviour can also be recognized in a numerical simulation:
\begin{figure}[H]
\begin{center}
	\labellist
	\pinlabel {\footnotesize $e$} [b] at 541 173 
	\pinlabel {\footnotesize $Q$} [tr] at 52 220
	\pinlabel {\footnotesize $P$} [r] at 58 43
	\endlabellist
\includegraphics[width=0.9\hsize]{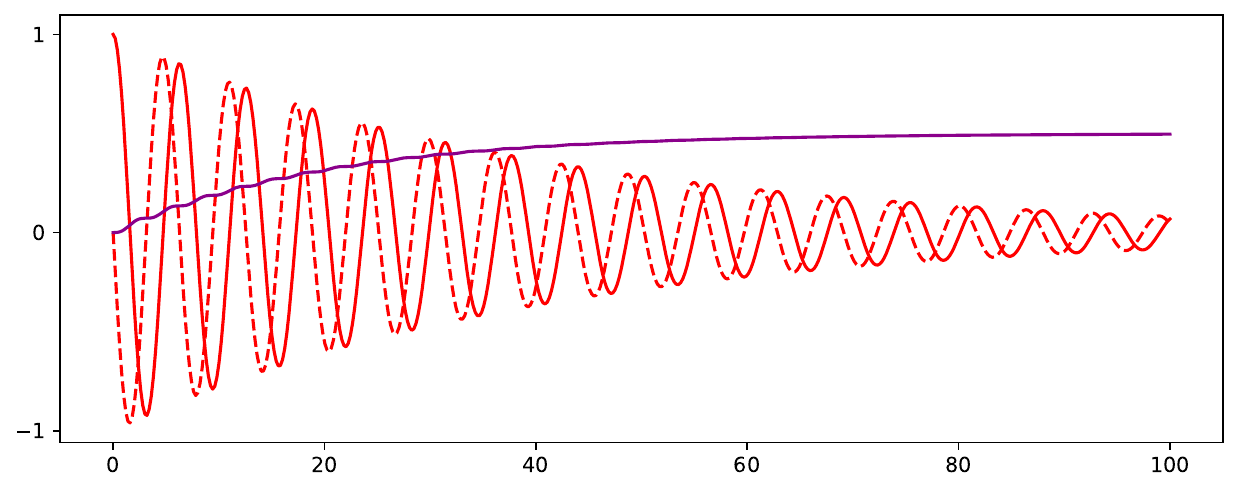}
\end{center}
\caption{Simulation of equation~\eqref{eq:ODE-harmonic-oscillator}. Note how $e$ is monotonically increasing, and the two other variables converge to zero. As $t\to\infty$, all energy is transferred from the mechanical components $P^2/2m+kQ^2/2$ to $e$, while keeping the sum $\calE$ constant.}
\end{figure}

\medskip
See~\cite{JungelStefanelliTrussardi21} for a version of the damped oscillator with varying temperature~$\theta$, and where entropy has the functional form $\calS(Q,P,\theta) = \log \theta$.
\end{Ex}

\bigskip
\noindent 
This example illustrates the basic question of this paper. The choice $\calS = \beta e$ allows us to write the equation~\eqref{eq:ODE-harmonic-oscillator} in the \generic form~\eqref{eq:GENERIC}. Comparing these two  equations shows us that $\calS$ generates the friction terms parametrized by $\gamma$ in~\eqref{eq:ODE-harmonic-oscillator}. This leads to more questions of the type we started with, such as: Why is \emph{friction} driven by something called `entropy'? Is the lost energy $e$ stored somewhere? Why is $\calS(Q,P,e) = \beta e$ the `right' formula?  What is $\beta$? Why is $\calS$ a function of this lost energy $e$?
We will return to all these questions at the end of Section~\ref{s:damped-pendulum}.

\subsection{Other examples}
\label{ss:intro-other-examples}

The question Q0, `what does it mean to be an entropy?' becomes even more interesting from noting that there are many other systems that are similarly driven by something called `entropy', and that these entropies have a range of different functional forms. 

\begin{Ex}[Diffusion as gradient flow of entropy]
\label{ex:JKO}
Jordan, Kinderlehrer, and Otto \cite{JordanKinderlehrerOtto97,JordanKinderlehrerOtto98} show that the Fokker-Planck equation
\begin{equation}
	\label{eq:FPeq}
	\partial_t \rho = \Delta \rho + \div \pra[\big]{\rho \nabla V} \qquad\text{in } \R^d
\end{equation}
can be written as $\dot \rho = K(\rho) \rmD \calS(\rho)$ (ie. \eqref{eq:GENERIC} with $\calE=0$) for 
\begin{equation}
	\label{eqdef:S-K-JKO}
	\calS(\rho) := -\int_{\R^d} \bra[\big]{\rho(x)\log \rho(x) + \rho(x) V(x) }\, \dd x ,
	\qquad K(\rho)\xi  := -\div \pra[\big]{\rho \nabla \xi} \text{ for }\xi:\R^d\to\R.
\end{equation}
Therefore the evolution equation~\eqref{eq:FPeq} can be considered to be driven by $\calS$ in~\eqref{eqdef:S-K-JKO}. The first term in  $\calS$ is commonly called the \emph{Gibbs-Boltzmann entropy} of $\rho$ (e.g.~\cite{JordanKinderlehrerOtto98}).

Since the publication of~\cite{JordanKinderlehrerOtto98} this setup has been generalized in many different ways, e.g.\ including interaction terms~\cite{CarrilloMcCannVillani03}, higher-order diffusion~\cite{GianazzaSavareToscani09,MatthesRott22}, nonlinear diffusion (see Example~\ref{ex:DSZ} next), and others. 
\end{Ex}

\begin{Ex}[Nonlinear diffusion as gradient flow of various entropies]
\label{ex:DSZ}
Otto~\cite{Otto01} shows that for certain $\phi:[0,\infty)\to[0,\infty)$ the nonlinear diffusion equation for a density $\rho = \rho(t,x):[0,\infty)\times \R^d \to [0,\infty)$,
\begin{equation}
	\label{eq:nonlinear-diffusion-PDE}
	\partial_t \rho = \Delta \phi(\rho) \qquad \text{in }[0,\infty) \times \R^d
\end{equation}
can be written as a gradient flow $\dot \rho = K(\rho)\rmD \calS(\rho)$ of the functional 
\begin{equation}
	\label{eqdef:GS-Otto-Wasserstein}
\calS(\rho) :=  -\int_{\R^d} s(\rho(x))\dd x, \qquad\text{with } \ u \,s''(u) =  \phi'(u),
\end{equation}
with the same operator $K(\rho)$ as in~\eqref{eqdef:S-K-JKO}.

Dirr, Stamatakis, and Zimmer~\cite[(7)]{DirrStamatakisZimmer16} derive the equation~\eqref{eq:nonlinear-diffusion-PDE} as the limit of the \emph{zero-range process}, with a similar functional $\calS$ but a different choice of $s$ and~$K$:
\[
s'(u) := \log \phi(u)\qquad\text{and}\qquad 
K(\rho)\xi := -\div \pra[\big]{\phi(\rho) \nabla \xi}.
\]
The authors refer to $s$ as an entropy.
In models of heat conduction, again the same equation appears but with yet another gradient structure~\cite{PeletierRedigVafayi14}:
\begin{equation}
	\label{eqdef:s-heat-conduction}
s(u) = \log u
\qquad\text{and}\qquad
K(\rho)\xi := -\div \pra[\big]{\alpha(\rho) \nabla \xi},
\end{equation}
for various choices of $\alpha$ including $\alpha(u) = u^2$, where $s$ is  again called an entropy by the authors.

These examples illustrate something that is well known in the \Generic and gradient-flow communities: the same partial differential equation can have multiple gradient-flow or \Generic structures, showing that the answer to question~\ref{q:unique-char-of-GENERIC} is negative. This immediately gives rise to the question `what determines the structure'? \MARK{Do we want to say more? E.g. we can add $\calE$ to $\calS$ without changing the equation, but then we no longer satisfy the NIC?}
\end{Ex}

\begin{Ex}[The Boltzmann equation as \generic system]
\label{ex:hom-BE}

Various authors have shown that the Boltzmann equation,
\[
\partial_t f = -\div_x (vf) +  \int_{\R^d} \!\int_{\bbS^{d-1}} \pra[\big]{f'f'_*-ff_*} B(v-v_*,\omega)
\dd v_*\dd \omega \qquad \text{in }\R^{2d},
\]
(see e.g.~\cite{Erbar16TR} for the notation)
can be written in \Generic form $\dot f = J(f) \rmD \calE(f) + K(f) \rmD \calS(f)$, 
where 
\[
\calE(f) := \frac12 \int_{\R^{2d}} |v|^2 f(x,v)\dd xdv 
\qquad \text{and}\qquad 
\calS(f) := -\int_{\R^{2d}} f \log f. 
\]
The Poisson operator is given by 
\[
J(f) \xi := \div_v \bra[\big]{f\nabla_x \xi} - \div_x \bra[\big]{f\nabla_v \xi},
\]
and for $K = K(f) \xi$ different expressions have been given, both linear in $\xi$~\cite[(11)]{Ottinger97} (see also~\cite{Erbar24}) and nonlinear in $\xi$~\cite[(A7)]{Grmela93}.
\end{Ex}

\smallskip
Many more systems can be written in \generic form, such as  the Vlasov-Fokker-Planck equation~\cite{DuongPeletierZimmer13},  reaction-diffusion systems~\cite{Mielke11}, plasma dynamics~\cite{Morrison86}, and many others. Our aim here is not to be exhaustive but to give some understanding, and we therefore limit ourselves to the examples above.


\bigskip

\subsection{The basic answer}
\label{ss:basic-answer}

We return to the basic question Q0,  `Why does entropy drive an	 evolution?'
An essential concept in this discussion is that of \emph{coarse-graining}, and a corresponding \emph{coarse-graining map}. 
\begin{definition}
	Let $\calX$ and $\calY$ be sets. 
	Any {non-injective} map $\Phi:\calX\to\calY$ is called a \emph{coarse-graining map}. \emph{Coarse-graining} is the mapping under $\Phi$ of an evolution in $\calX$ into an evolution in $\calY$.
\end{definition}
\noindent
It is exactly the non-injectivity of this map that gives rise to the concept of entropy. 

\medskip
In this paper we formulate a single answer to both this basic question and all the related questions listed above. 

\begin{answer}
\label{ans:basic}
Entropy drives evolution because it characterizes the invariant measure of an underlying `microscopic' dynamics after {coarse-graining}. 
Entropy itself should be understood as either `equal to' or `a remnant of' this  invariant measure, depending on the situation. 
The entropy and the `driving' are generated together by the coarse-graining.
\end{answer}

\noindent
The purpose of the rest of this paper is  both to explain this answer and to show how it unites the different examples.

\bigskip


\medskip

The Basic Answer above describes two ways in which entropy may appear as a driver of a given evolution equation. Both ways involve connecting the equation to some `microscopic dynamics', which often is stochastic, and this stochasticity expresses itself in different ways. In the first interpretation---entropy being `equal' to an invariant measure---the stochasticity is visible in the evolution equation that one started with, while in the second interpretation---entropy being a `remnant' of an invariant measure---the stochastic process is only visible `in the backgroud'.

The examples in Sections~\ref{ss:damped-oscillator-intro} and~\ref{ss:intro-other-examples} are all deterministic evolution equations, and in this sense they fall into the second category. Here the stochastic process only plays a role in the background of the evolution equation, and the entropy has an interpretation as a large-deviation rate function. We will discuss such cases in Section~\ref{s:entropy-large-deviations}.

However, I claim that the first interpretation, in which the evolution is random, is actually the more fundamental one, and that the deterministic examples should be considered to be deterministic limits of such random evolutions. We will first explore this basic idea in Section~\ref{s:core-example}, and we will see this in detail for Example~\ref{ex:damped-harmonic-oscillator} in Section~\ref{s:damped-pendulum}.

\begin{remark}[Entropy as volume of macrostates]
Many readers will be familiar with the abstract idea that entropy is the `volume of a region of microstates corresponding to a single macrostate', going back to Boltzmann~\cite{Boltzmann77b}. This `volume' point of view should be seen as a particular instance of the more precise interpretation that we discuss in this paper. 
\end{remark}

\begin{remark}[Entropy and free energy]
	\label{rem:entropy-free-energy}
`Entropy' is closely connected to the various interpretations of `free energy', and there is much that could be said about this connection. In this paper I have decided to mostly leave this aspect aside, and only make a few remarks:
\begin{itemize}
\item Although historically and in other contexts there is good reason to distinguish `entropy' from `free energy', I claim that for the purpose of this paper---understanding why certain functionals appear to drive evolution equations---it does not matter which name we give to such a functional. 
\item In the statistical mechanics community, it is common to call any large-deviation rate function an `entropy'. Given the role of large deviations in the Basic Answer and in Section~\ref{s:entropy-large-deviations} below, this supports the choice not to stress the distinction between entropy and free energy. 
\end{itemize}
For this reason I also call functionals such as in row~\ref{ent:FP} of Table~\ref{table:different-entropies}  `entropies' even though others use the term `free energy' (e.g.~\cite{JordanKinderlehrerOtto97,JordanKinderlehrerOtto98}).
\end{remark}

\begin{remark}[Novelty]
	This paper contains few novel results or insights, in the usual sense of the word; for most of the remarks and statements that I make, there will be  researchers who feel that it is `well known'. On the other hand, I do believe that this particular gathering and organization of insights around the questions of this paper is new, and that it may be of use to many mathematicians working in the fields of gradient flows and GENERIC systems. 
\end{remark}

\section{A core example}
\label{s:core-example}

\MARK{From notes: Can we give a convincing and simple example like this Core Example that does not require $V = V\circ \Phi$?}

We start with an example that illustrates the Basic Answer of Section~\ref{ss:basic-answer} in a very simple setup, in which the microscopic dynamics is stochastic and has the form of a stochastic differential equation (SDE). In this example we can see in detail three ingredients of how the Basic Answer arises:
\begin{enumerate}[label=(\alph*)]
	\item \label{enum:ex-principle1}
	 Coarse-graining an SDE leads to drift terms in the SDE;
	\item The drift terms involve derivatives of the coarse-grained invariant measure, and can also be expressed in terms of the \emph{degeneracy} of the coarse-graining map;
	\item \label{enum:ex-principle3}
	The drift terms are generated by the interaction between the degeneracy and the noise.
\end{enumerate}
This degeneracy is called \emph{entropy.}
While this example is special, it illustrates the basic principles behind all of the examples of this paper.

\subsection{Setup of the core example}
\label{ss:setup-core-example}

The setup is that of a `microscopic' process $X$ in $\R^n$ and its coarse-grained counterpart $Y_t := \Phi_n(X_t) = |X_t|$:
\begin{equation}
	\label{eq:X-to-Y-coarse-graining}
\left\{
		X_t \text{ process in }\R^n
\right\}
\quad
\xrightarrow{\quad \Phi_n(x) \,:=\, |x|\quad }
\quad 
\left\{
		Y_t \text{ process in }\R_+
\right\}.
\end{equation}
The process $X$ is defined to be flat diffusion in the unit ball $B(0,1)$ in $\R^n$, with mobility scaled by a parameter $\beta>0$:
\begin{equation}
	\label{eq:CG-example-SDE-X}
\rmd X_t = \sqrt{\frac2\beta } \,\rmd W_t \qquad\text{in }B_{\R^n}(0,1), 
\qquad\text{with reflecting boundary conditions}.
\end{equation}
Here $W$ is an $n$-dimensional Brownian motion. This process $X$ has a unique stationary probability measure 
\begin{equation}
	\label{eqdef:CG-example-pi_n}
	\mu_n := c_n\Lebesgue^n|_{B(0,1)},
\end{equation}
where $\Lebesgue^n$ is the $n$-dimensional Lebesgue measure and $c_n>0$ is a normalization constant.

The coarse-grained process $Y_t= \Phi_n(X_t) = |X_t|$ is again a Markov process and solves the SDE~\cite[Def.~XI.1.9]{RevuzYor13}
\begin{align}
\rmd Y_t &= \frac{n-1}{\beta Y_t} \dd t + \sqrt{\frac2\beta} \dd B_t\notag  \\
&=\frac1{\beta}\calS_n'(Y_t) \dd t + \sqrt{\frac2\beta} \dd B_t.	
\label{eq:CG-example-SDE-Y}
\end{align}
where now $B$ is a one-dimensional Brownian motion, and where in the second line we used the new function
\begin{equation}
	\label{eqdef:CG-example-S}
\calS_n(y) := (n-1)\log y \qquad \text{for }y>0.
\end{equation}

\medskip
We claim that the appearance of the new drift term in~\eqref{eq:CG-example-SDE-Y}, which is characterized by the function $\calS_n$ and the noise parameter $\beta$, gives a basic insight into what entropy is and why it drives evolution equations.

\medskip
The next lemma gives some first indications; we first state and prove the lemma and afterwards discuss its interpretations.

\begin{lemma}
	\label{l:CG-example-S_n}
	Define the process $X$ as in~\eqref{eq:CG-example-SDE-X} and set $Y_t := \Phi(X_t) = |X_t|$ for all $t$, as above.
	The process $Y$ has the invariant measure $\nu_n := (\Phi_n)_\# \mu_n$ (see~\eqref{eqdef:push-forward}), which can also be characterized as 
	\[
	\nu_n(\rmd y) = c_n'\,\ee^{\calS_n(y)} \rmd y,
	\]
	where $c_n'$ is a normalization constant.
The function $\calS_n$ satisfies
\begin{subequations}
	\label{l:CG-example-S_n-char}
\begin{align}
	\calS_n(y) &= \log \frac{\rmd (\Phi_n)_\# \mu_n}{\rmd\Lebesgue^1}(y) + \mathrm{constant}_{1,n}
	\label{l:CG-example-S_n-char1}\\
	&= \log \int_{\R^n} \pdelta\pra[\big]{\Phi_n(x) - y}(\rmd x)  + \mathrm{constant}_{2,n}.
	\label{l:CG-example-S_n-char2}
\end{align}
\end{subequations}
\end{lemma}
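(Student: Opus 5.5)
The argument has three parts: invariance of $\nu_n$ under $Y$, the explicit density of $\nu_n$ via polar coordinates, and the delta-function reformulation. All are direct computations.

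\emph{Invariance.} Since $\mu_n$ is stationary for $X$, if $X_0\sim\mu_n$ then $X_t\sim\mu_n$ for all $t$. Consequently $Y_t=\Phi_n(X_t)\sim(\Phi_n)_\#\mu_n=\nu_n$ for all $t$. Because $Y$ is itself a Markov process (the cited result from~\cite{RevuzYor13}), this shows that $\nu_n$ is invariant for $Y$.

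As a consistency check one can also verify invariance directly from~\eqref{eq:CG-example-SDE-Y}. The generator is $\beta^{-1}\bra{f''+\calS_n'f'}$ on $(0,1)$ with reflection at $y=1$. Its adjoint annihilates $\ee^{\calS_n}$, since the flux $\beta^{-1}\bra{(\ee^{\calS_n})'-\calS_n'\ee^{\calS_n}}$ vanishes identically. The zero flux is also compatible with the reflecting boundary.

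\emph{Density.} Write $\Lebesgue^n$ in polar coordinates, $\rmd x=r^{n-1}\dd r\,\rmd\sigma(\omega)$. For a test function $f$,
\[
\int f\dd\nu_n=c_n\int_{B(0,1)}f(|x|)\dd x=c_n|\bbS^{n-1}|\int_0^1 f(r)\,r^{n-1}\dd r .
\]
Hence $\nu_n(\rmd y)=c_n|\bbS^{n-1}|\,y^{n-1}\bONE_{(0,1)}(y)\dd y=c_n'\ee^{\calS_n(y)}\dd y$ on $(0,1)$. Taking logarithms gives~\eqref{l:CG-example-S_n-char1}, with $\mathrm{constant}_{1,n}=-\log(c_n|\bbS^{n-1}|)$.

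\emph{Delta-function form.} Interpret $\int_{\R^n}\pdelta[\Phi_n(x)-y](\rmd x)$ as the density with respect to $\rmd y$ of the pushforward $(\Phi_n)_\#\Lebesgue^n$, i.e.\ the coarea density. By the same polar computation it equals $|\bbS^{n-1}|\,y^{n-1}$ for $y\in(0,1)$; equivalently, $\int_{\{|x|=y\}}|\nabla\Phi_n|^{-1}\dd\calH^{n-1}=|\bbS^{n-1}|y^{n-1}$ since $|\nabla\Phi_n|=1$. Its logarithm is $\calS_n(y)+\log|\bbS^{n-1}|$, which gives~\eqref{l:CG-example-S_n-char2}.

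The only real subtlety is giving the formal expression $\pdelta$ a precise meaning. I would make it precise either by the coarea formula or by the limit $\lim_{\e\to0}\int\eta_\e(\Phi_n(x)-y)\dd x$ with mollifiers $\eta_\e$. The same step should also note the restriction to $y\in(0,1)$ coming from $\supp\mu_n$; for $y\ge1$ both sides of~\eqref{l:CG-example-S_n-char1} are only comparable on the support.
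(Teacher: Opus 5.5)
Your proof is correct and follows essentially the same route as the paper. Invariance is inherited from $\mu_n$ through the push-forward, the density $c_n' y^{n-1}$ comes from polar coordinates, and the microcanonical form follows by identifying $\int_{\R^n}\pdelta[\Phi_n(x)-y](\rmd x)$ with the Lebesgue density of $(\Phi_n)_\#\Lebesgue^n$; the paper phrases this last step as the limit of $\frac1h\mu_n(\{y\le\Phi_n(x)<y+h\})$, which amounts to the same thing. Your explicit restriction to $y\in(0,1)$, which the paper's computation over all of $\R^n$ glosses over, is a genuine gain in precision, and the generator check is a correct but optional addition.
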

\noindent
Here $\pdelta$ is the measure on $\R^n$ that is heuristically defined for any $f:\R^n\to\R$ and $y\in \R$ by
\begin{equation}
	\label{eqdef:pdelta}
\int_{\R^n} \varphi(x) \pdelta\pra[\big]{f(x) - y}(\rmd x) := 
  \lim_{h\downarrow 0} \frac1h \int_{\R^n} \varphi(x) \bONE\big\{y \leq f(x) < y+h\big\}\dd x
  \qquad \text{for all }\varphi\in C_c(\R^n).
\end{equation}
The measure $\pdelta\pra[\big]{f(x) - y}$ is concentrated on the level set $f^{-1}(y)$ in $x$-space.
Such measures are often called `microcanonical' in view of their use to describe `microcanonical ensembles' in statistical physics. 
In Appendix~\ref{app:pdelta} we give a rigorous definition and we specifiy the properties that we use.

\begin{proof}[Proof of Lemma~\ref{l:CG-example-S_n}]
The fact that $Y$ has the push-forward measure as invariant measure is a simple consequence of the properties of the push-forward. 
The characterization~\eqref{l:CG-example-S_n-char1} follows by applying~\eqref{eqdef:push-forward} to $\Phi_n$ and $\mu_n$, finding for any measurable $\varphi:[0,\infty)\to\R$ that
\begin{align*}
\int_0^\infty \varphi(y) \, \nu_n(\rmd y) \ 
&\leftstackrel{\eqref{eqdef:push-forward}}= c_n \int_{\R^n} \varphi(|x|) \dd x 
= c_n \int_0^\infty \varphi(r)\, \calH^{n-1}(\partial B(0,r)) \, \dd r\\
&= c_n \int_0^\infty \varphi(r) C_n r^{n-1}\, \dd r
= c_n'\int_0^\infty \varphi(r) \exp\bra[\big]{\;\underbrace{(n-1) \log r}_{\calS_n(r)} \,}\dd r.
\end{align*} 
The expression~\eqref{l:CG-example-S_n-char2} is a simple rewriting of~\eqref{l:CG-example-S_n-char1}, which can be recognized by noting that 
\begin{align*}
\frac{\rmd (\Phi_n)_\# \mu_n}{\rmd\Lebesgue^1}(y)
&= \lim_{n\downarrow0} \frac1h (\Phi_n)_\#\mu_n\bra[\big]{[y,y+h)}
= \lim_{n\downarrow0} \frac1h \mu_n\bra[\big]{\set*{x\in\R^n: y \leq \Phi_n(x)< y+h}}.
\end{align*}
Since $\mu_n$ is a constant times $n$-dimensional Lebesgue measure, this final expression coincides with the right-hand side of~\eqref{eqdef:pdelta} when applied to~\eqref{l:CG-example-S_n-char2}.
\end{proof}


\subsection{Interpretation of the core example}

I used the letter $\calS$ for the function $\calS_n$ in~\eqref{eqdef:CG-example-S} because this function is the first example of the class of `entropies' according to the Basic Answer of Section~\ref{ss:basic-answer}. We discuss two important features. 

\paragraph{$\calS_n$ `is equal to' the invariant measure after coarse-graining.} 
The flat invariant measure $\mu_n = c_n\Lebesgue^d$ for $X$  transforms under $\Phi_n$ to the tilted measure 
\begin{equation}
	\label{eqdef:inv-meas-core-example-e-Sn}
	c_n' \,\ee^{\calS_n(y)}\rmd y,
\end{equation}
and by construction this measure is invariant for $Y_t = \Phi_n(X_t)$. In this sense $\calS_n$ `is equal to' the invariant measure after coarse-graining,  up to taking a Lebesgue density and a logarithm. 

\paragraph{The drift term $\calS_n'(Y_t)/\beta$ in~\eqref{eq:CG-example-SDE-Y} is caused by the combination of  coarse-graining and noise.}
This example also shows that the drift in the SDE~\eqref{eq:CG-example-SDE-Y} is not caused by the coarse-graining  alone, but by the combination of both coarse-graining and noise. At a formal level, this can be observed in the appearance of the noise parameter~$\beta$ in the drift term $\calS_n'/\beta \dd t$ in~\eqref{eq:CG-example-SDE-Y}; smaller noise  (larger $\beta$) implies a smaller drift.

\bigskip\noindent
This example and the function $\calS_n$ therefore illustrate the two components of the Basic Answer:
\begin{itemize}
\item \emph{Entropy should be understood as either `equal to' or `a remnant of' an invariant measure.} In this case the `equal to' applies: $\calS_n$ is (the logarithm of the Lebesgue density of) the coarse-grained invariant measure $\nu_n$.
\item \emph{Both this invariant measure and the `driving' by the entropy are generated by {coarse-graining} more microscopic dynamics.} Indeed, both the invariant measure $\nu_n$ and the drift term $\calS_n'(Y_t)/\beta$ in~\eqref{eq:CG-example-SDE-Y} are shaped by both (a) the microscopic dynamics of $X$ and (b) the coarse-graining $Y = \Phi_n(X)$.
\end{itemize}

\begin{remark}[$\calS_n$ is called the \emph{degeneracy} of $\Phi_n$.]
The function $\Phi_n$ is non-injective, and the size of level sets $\Phi_n^{-1}(y)$ varies with $y$. The characterizations~\eqref{l:CG-example-S_n-char} show that $\calS_n$ is equal to this `size of the level set', up to a logarithm and an additive constant.  In the literature the size of a level set of a coarse-graining map such as $\Phi_n$ is often called the `degeneracy' of the set, and with this terminology $\calS_n$ is the logarithm of the degeneracy of $\Phi_n$.
\end{remark}

\paragraph{Geometry gives another view on emergence of drift.} 
Figure~\ref{fig:geometry} illustrates geometrically the same point as the observation above, that noise and coarse-graining together convert the unbiased diffusion process $X$ into a process $Y$ with drift. The figure shows how it is the combination of \emph{noise} with \emph{curvature of the level sets} of $\Phi_n$ that generates this drift. This is consistent with the observation above that variation in size of level sets generate the drift, since the curvature of a level set exactly characterizes the local variation in level-set area between neighbouring level sets. 
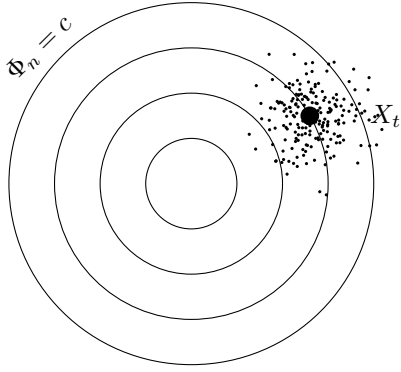
\begin{figure}[ht]

\centering
\begin{tikzpicture}[scale=1]
	\pgfmathsetseed{123456}

	\draw[thin] (0,0) circle (0.6);
	\draw[thin] (0,0) circle (1.2);
	\draw[thin] (0,0) circle (1.8);
	\draw[thin] (0,0) circle (2.4);

	\fill[black] (1.55884572681,0.9) circle (3.5pt);

	\node[rotate=45] at (-2,1.8) {\footnotesize $\Phi_n = c$};

	\begin{scope}[shift={(1.55884572681,0.9)}]

		\def\sigma{0.35}     
		\def\npoints{200}    

		\foreach \i in {1,...,\npoints} {
			\pgfmathsetmacro{\u}{rnd}
			\pgfmathsetmacro{\v}{rnd}
			\pgfmathsetmacro{\zone}{sqrt(-2*ln(\u))*cos(360*\v)}
			\pgfmathsetmacro{\ztwo}{sqrt(-2*ln(\u))*sin(360*\v)}
			\pgfmathsetmacro{\x}{\sigma*\zone}
			\pgfmathsetmacro{\y}{\sigma*\ztwo}
			\fill (\x,\y) circle (0.6pt);
		}

		\node at (1,0) {\footnotesize $X_t$};
	\end{scope}

\end{tikzpicture}

\caption{
A particle $X$ released at time zero at $X_0$ (the large dot) visits neighbouring points as time increases (the small dots). These points are symmetrically spread around $X_0$ in $X$-space, but the curvature of the level sets of $\Phi_n$ causes their $Y$-values to be biased towards larger values of $\Phi_n$. }
\label{fig:geometry}
\end{figure}

\subsection{Non-flat diffusion and connection to `free energy'}

In the example of the previous section the process $X$ has the Lebesgue measure as invariant measure. We now modify this to a measure of the form 
\[
\mu_n(\rmd x) = \ee^{-\beta V(x)}\dd x 
\qquad\text{with }V(x) = \widetilde V (|x|).
\]
This measure is invariant if we add a drift term to~\eqref{eq:CG-example-SDE-X}, 
\begin{equation}
\rmd X_t  = - \nabla V(X_t) \dd t + \sqrt{\frac2\beta }\dd W_t.
\end{equation}
Performing the same coarse-graining by $\Phi_n$ in~\eqref{eq:X-to-Y-coarse-graining} we find that $Y_t = \Phi_n(X_t) = |X_t|$ satisfies the corresponding SDE, which generalizes~\eqref{eq:CG-example-SDE-Y},
\begin{equation}
\rmd Y_t = -  \widetilde V'(Y_t)\dd t + 
\frac1{\beta}\calS_n'(Y_t) \dd t + \sqrt{\frac2\beta} \dd B_t.
\end{equation}
The analogue of Lemma~\ref{l:CG-example-S_n} for this situation is as follows. Note that we now use a different symbol (`$\calF_n$') and name (`free energy') for the logarithmic Radon-Nikodym derivative; this allows us to separate contributions from $V$ (`energy') from those coming from the degeneracy (`entropy'). However, as remarked above, one should not attach too much significance to these names---the aim is to understand how degeneracy enters the situation.
\begin{lemma}
	\label{l:CG-example-S_n-with-V}
	The process $Y$ has the invariant measure $\nu_n := (\Phi_n)_\# \mu_n$, which can also be characterized as 
	\[
	\nu_n(\rmd y) = c_n'\,\ee^{-\beta \calF_n(y)} \rmd y,
	\]
	where $c_n'$ is a normalization constant.
The `free energy' function $\calF_n$ satisfies
\begin{subequations}
\begin{align}
	-\beta \calF_n(y) &= \log \frac{\rmd (\Phi_n)_\# \mu_n}{\rmd\Lebesgue^1}(y) + \mathrm{constant}_{1,n}
	\\
	&= \log \int_{\R^n} \ee^{-\beta V(x)}\pdelta\pra[\big]{\Phi_n(x) - y}(\rmd x)  + \mathrm{constant}_{2,n}.
\end{align}
\end{subequations}
We also have the relation
\[
\calF_n(y) = \wt V(y) - \frac1\beta \calS_n(y),
\]
where $\calS_n$ is given in~\eqref{eqdef:CG-example-S}.
\end{lemma}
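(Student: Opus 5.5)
The plan mirrors the proof of Lemma~\ref{l:CG-example-S_n}, replacing the flat density by $\ee^{-\beta V}$. First I would establish invariance of $\nu_n=(\Phi_n)_\#\mu_n$ for $Y$. The cleanest route is to note that $\mu_n(\rmd x)=\ee^{-\beta V(x)}\dd x$ is invariant for $X$, since the generator $L=\beta^{-1}\Delta-\nabla V\cdot\nabla$ satisfies $L^*\ee^{-\beta V}=0$. Because $Y_t=\Phi_n(X_t)$ is Markov, for any bounded $\varphi$ we have
\[
\bbE_{\mu_n}\varphi(Y_t)=\bbE_{\mu_n}(\varphi\circ\Phi_n)(X_t)=\int\varphi\circ\Phi_n\,\dd\mu_n=\int\varphi\,\dd\nu_n.
\]
Hence $\nu_n$ is invariant for $Y$, exactly as argued before. (If $\mu_n$ is not finite, I would restrict to the ball with reflection as in~\eqref{eq:CG-example-SDE-X}, or assume integrability of $\ee^{-\beta V}$, and normalize.)

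Next I would compute the density of $\nu_n$ using polar coordinates and the radial symmetry $V(x)=\wt V(|x|)$. For measurable $\varphi$,
\[
\int\varphi\,\dd\nu_n=\int_{\R^n}\varphi(|x|)\ee^{-\beta\wt V(|x|)}\dd x=C_n\int_0^\infty\varphi(r)\,\ee^{-\beta\wt V(r)}r^{n-1}\dd r .
\]
This gives $\nu_n(\rmd y)=c_n'\exp\bigl(-\beta\wt V(y)+(n-1)\log y\bigr)\rmd y$. Defining $\calF_n:=\wt V-\beta^{-1}\calS_n$ produces the claimed form $c_n'\ee^{-\beta\calF_n}$ together with the relation $\calF_n=\wt V-\beta^{-1}\calS_n$, where $\calS_n$ is as in~\eqref{eqdef:CG-example-S}. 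The first characterization, $-\beta\calF_n=\log\frac{\rmd\nu_n}{\rmd\Lebesgue^1}+\mathrm{const}$, is then immediate by taking logarithms, with the additive constant absorbing $\log c_n'$.

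For the second characterization I would write the Lebesgue density as the limit $\lim_{h\downarrow0}h^{-1}\mu_n(\{y\le\Phi_n<y+h\})$. This equals $\lim_h h^{-1}\int\ee^{-\beta V(x)}\bONE\{y\le\Phi_n(x)<y+h\}\dd x$, which by definition~\eqref{eqdef:pdelta} (with test function $\ee^{-\beta V}$, justified by the rigorous version in Appendix~\ref{app:pdelta}, cutting off if $V$ is not compactly supported) is $\int\ee^{-\beta V}\pdelta[\Phi_n(x)-y](\rmd x)$.

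The main obstacle is the technical justification of these steps rather than the algebra. Two points need care: that $\ee^{-\beta V}$ is an admissible test function in~\eqref{eqdef:pdelta}, which is not $C_c$ in general, and that the limit exists for every $y>0$ rather than merely a.e. Continuity of $\wt V$ on $(0,\infty)$ handles both points. Indeed, $\pdelta[|x|-y]$ is surface measure on $\partial B(0,y)$, on which $V\equiv\wt V(y)$, so the integral equals $\ee^{-\beta\wt V(y)}C_ny^{n-1}$ directly. This agrees with the polar-coordinate computation.
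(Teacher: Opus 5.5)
Your proposal is correct and follows the route the paper indicates: the paper omits this proof as `very similar' to that of Lemma~\ref{l:CG-example-S_n}, whose steps are invariance of the push-forward, polar coordinates (here using the radial symmetry $V=\wt V(|x|)$), and identification of the Lebesgue density with the $\pdelta$-integral. Your generator check $L^*\ee^{-\beta V}=0$ and your justification for using $\ee^{-\beta V}$ as a test function in the $\pdelta$-definition just make explicit details that the paper leaves implicit.
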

\noindent
The proof is very similar to that of Lemma~\ref{l:CG-example-S_n} and we omit it.

\subsection{Further comments}

\begin{remark}[Combined large-dimension and small-noise limits]
	\label{rem:deterministic-equations-from-combined-limits}
	There is an apparent contradiction in the discussion above: If the drift generated by degeneracy depends on noise, then why does one see these effects also in deterministic equations such as in the examples in Section~\ref{se:central_question}? The answer lies in a combined limit $\beta\to\infty$ and $n\to\infty$: if we consider a sequence of systems of increasingly high dimension $n\to\infty$, and corresponding vanishing noise levels $\beta_n \to\infty$,  in such a way that the ratio $\beta_n/n$ converges to a constant~$\beta_\infty$, then we observe that the drift term in the SDE~\eqref{eq:CG-example-SDE-Y} converges to
\[
	\frac{n-1}{\beta_n Y_t} \;\xrightarrow{n\to\infty} \;\frac1{\beta_\infty Y_t} \;= \;\frac1{\beta_\infty} \calS'(Y_t),
\]
where we define the limiting entropy-per-degree-of-freedom $\calS$ by
\begin{equation}
	\label{eq:conv-Sn-S}
\calS(y) := \lim_{n\to\infty } \frac 1n \calS_n(y) = \log y.
\end{equation}
As $n\to\infty$ the process~$Y$ therefore converges in distribution to the solution of the deterministic equation
\begin{equation}
\label{eq:limit-ODE}
	\rmd Y_t = \frac{1}{\beta_\infty Y_t} \dd t 
	\qquad\text{or equivalently}\qquad 
	\dot y = \frac1{\beta_\infty y},
\end{equation}
which is a gradient-flow equation of the form~\eqref{eq:GF},
\[
\dot y = K \calS'(y), \qquad \text{with} \quad K: \R\to\R, \quad K\xi := \frac1{\beta_\infty}\xi.
\]
Note how the operator $K$ explicitly depends on the characterization $\beta_\infty$ of the scaling of the noise in $n$.

This remark shows how the combined small-noise, large-dimension limit leads to a deterministic gradient flow with drift $K\calS'$ determined by both the noise and the degeneracy. This also is an example of the second option in the Basic Answer of Section~\ref{ss:basic-answer}. 
\end{remark}

\begin{remark}[The scaling $\beta_n/n = O(1)$ is common]
While the choice $\beta_n/n \to \beta_\infty\in (0,\infty)$ in Remark~\ref{rem:deterministic-equations-from-combined-limits} may appear arbitrary, it is a simplified version of a very common situation. In many cases $\Phi_n$ is an average of $n$ objects, as in e.g.~\eqref{eqdef:Phi_n-empirical-measure} below, and the noise in each of these objects is therefore scaled by $1/n$; the choice $\beta_n/n\to \beta_\infty$ above is artificial version of this. In fact, in all of the examples in Section~\ref{s:entropy-large-deviations} the function~$\Phi_n$ is some form of average, and those examples show the same basic behaviour as in Remark~\ref{rem:deterministic-equations-from-combined-limits}.
\end{remark}

\MARK{
\begin{remark}[Drift terms from bijective transformations]
	Need to comment on drift terms that arise from bijective transformations; they are traditionally not called `entropic'.
\end{remark}
}

\section{Entropy as `remnant' of invariant measures}
\label{s:entropy-large-deviations}

In the previous section we showed that coarse-graining an SDE leads to  drift terms involving functions $\calS_n$  that we called `entropy', and which characterize an invariant measure.

Although the coarse-grained evolution is still stochastic, it may become deterministic in a combined small-noise and large-dimension limit, as described in Remark~\ref{rem:deterministic-equations-from-combined-limits}. 
This is the case of the second option of the Basic Answer of Section~\ref{ss:basic-answer}, which states that entropy may be a `remnant' of an invariant measure. The more general and more precise statement is that entropy may be a \emph{large-deviation rate function} of a sequence of invariant measures. We describe entropy in this case as a `remnant', since it is not the density of the invariant measure itself, but something that `remains' of the invariant measure after taking a singular limit.

In a very heuristic sense, the two cases can be compared as follows:
\begin{align*}
&\text{Option 1, stochastic case: } && \text{invariant measure }\ = \ee^{\calS(y)}\dd y\\
&\text{Option 2, large-deviation case: } && \text{invariant measure }\ \sim \ee^{n\calS(y)}\dd y\quad \text{in the limit }n\to\infty.
\end{align*}
In the limit in Remark~\ref{rem:deterministic-equations-from-combined-limits} one can see how the system transitions from the first case to the second. At finite $n$, the invariant measure $\ee^{\calS_n(y)}\dd y$ in~\eqref{eqdef:inv-meas-core-example-e-Sn} is of the first type, but the scaling $\calS_n(y) \sim n\calS(y)$ given by~\eqref{eqdef:CG-example-S} means that in the limit $n\to\infty$ the situation becomes that of the second type.

\medskip

In this section we explore this second interpretation, in which the entropy is a  large-deviation rate function of invariant measures.

\subsection{Relative entropies and Sanov's theorem}
\label{ss:relative-entropies-Sanov}
Behind the `$\rho\log \rho$'-type terms in Table~\ref{table:different-entropies} lies the theorem of Sanov, which gives a general characterization of the large-deviation properties of empirical measures. To describe this we first introduce a class of \emph{relative entropies}.

Let $\phi: \R\to\R\cup\{+\infty\}$ be convex, and let $\Omega$ be a topological space. 
\begin{definition}
\label{def:RelEnt}
The $\phi$--\emph{relative entropy} of two probability measures $\mu$ and $\nu$ on $\Omega$ is defined as
\begin{equation}
	\label{eqdef:RelEnt}
\RelEnt_\phi(\mu|\nu) := \begin{cases}
\ds  \int_\Omega \phi\bra*{\frac{\dd \mu}{\dd\nu}(x)}\nu(\rmd x)
  & \text{if $\mu\ll \nu$},\\
+\infty & \text{otherwise}.
\end{cases}
\end{equation}
The \emph{Boltzmann relative entropy} (also called \emph{Kullback-Leibler divergence}, or simply \emph{relative entropy}) is the special case
\[
\phi(s) = \phi_{\mathrm B}(s) := \begin{cases}
	s\log s &\text{if }s>0,\\
	0 & \text{if }s=0,\\
	+\infty & \text{if }s<0.
\end{cases}\qedhere
\]
\end{definition}
\MARK{Do we actually need the full generality of $\calH_\phi$? Do we actually use it later? Otherwise we could also just make a remark about making the formulas rigorous.}

To illustrate the appearance of the `$\rho\log\rho$'-terms in Table~\ref{table:different-entropies} we describe a simple probabilistic experiment. Fix an open set $\Omega\subset \R^d$ and a function $V:\Omega\to\R$. For simplicity we assume that $\mu(\rmd x) := \ee^{-V(x)}\dd x$ is a probability measure, i.e.\ that $\int_\Omega \ee^{-V(x)}\dd  x=1$. 

Let $X_1,X_2,X_3,\dots$ be a sequence of i.i.d.\ random variables in $\Omega$ with common law $\mu$, and define their empirical measures
\[
\rho^n := \frac1n \sum_{i=1}^n \delta_{X_i},
\]
or more formally in terms of a coarse-graining map $\Phi_n$,
\begin{subequations}
	\label{eqdef:Phi_n-empirical-measure}
\begin{align}
\rho^n &:= \Phi_n(X_1,\dots, X_n), \qquad\text{with}\quad \\
\Phi_n&:	\Omega^n \to \ProbMeas(\Omega), \quad \Phi_n(x_1,\dots ,x_n):=\frac1n\sum_{i=1}^n \delta_{x_i}.
\end{align}
\end{subequations}
For fixed $n$, $\rho^n$ is a random probability measure on $\Omega$, i.e.\ a random element of $\ProbMeas(\Omega)$.

In the limit $n\to\infty$, the following things happen:
\begin{enumerate}
\item Law of large nunbers: $\rho^n$ converges almost surely to the law $e^{-V(x)}\dd x$ of the $X_i$.
\item Large deviations (see Theorem~\ref{t:Sanov} below): As $n\to\infty$, for each $A\subset \ProbMeas(\Omega)$, 
\begin{equation}
	\label{eq:LDP-intro}
\Prob\pra[\big]{\rho^n\in A} \sim \ee^{-nI(A)}, \qquad I(A) := \inf_{\rho\in A} I(\rho), \qquad I(\rho) := \int_\Omega\rho(\log \rho + V). 
\end{equation}
\end{enumerate}
Informally, the statement~\eqref{eq:LDP-intro} means that for each $\rho\in \ProbMeas(\Omega)$ the probability that $\rho^n$ is close to $\rho$ scales as 
\[
\Prob\pra[\big]{\rho^n\approx \rho} \sim \ee^{-nI(\rho)} \qquad \text{as }n\to\infty.
\]
Note that the \emph{rate function} $I$ is the Boltzmann relative entropy with respect to the law~$\mu$ of the $X_i$: writing $\rho(\rmd x) = \rho(x) dx$, 
\begin{align*}
\RelEnt_{\phi_{\mathrm B}}(\rho|\mu) 
&= \int_\Omega \phi_{\mathrm B}\bra*{\frac{\dd\rho}{\dd\mu}} \dd\mu
= \int_\Omega \phi_{\mathrm B}\bra*{\rho(x) \ee^{V(x)}} \ee^{-V(x)}\dd x\\
&= \int_\Omega \bra[\big]{\rho(x)\log \rho(x) + \rho(x) V(x)}\dd x = I(\rho).
\end{align*}
Also note that $I(\mu) = 0$, and this is consistent with the law of large numbers: if $A\subset \ProbMeas(\Omega)$ is an open set containing $\mu$, then the law of large numbers implies $\Prob[\rho^n\in A]\to1$, and this is matched by the fact that $I(A)=I(\mu) = 0$.

The large-deviation statement above is a special case of Sanov's theorem, which applies in much more generality:

\begin{theorem}[Sanov's theorem {\cite[Sec.~6.2]{DemboZeitouni98}}]
\label{t:Sanov}
Let $X_1,X_2, \dots$ be i.i.d.\ random variables taking values in a complete separable metric space $\bfX$ with common law $\mu\in \ProbMeas(\bfX)$. Define the empirical measure of the first $n$ random variables, 
\[
\rho^n := \frac1n \sum_{i=1}^n \delta_{X_i}.
\]
Then the random variable $\rho^n$ satisfies a large-deviation principle with rate function 
\begin{equation}
	\label{eqdef:I-LDP-Sanov}
I(\rho) := \RelEnt_{\phi_{\mathrm B}}(\rho|\mu).
\end{equation}
\end{theorem}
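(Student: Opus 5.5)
The plan is the standard route via Cramér-type arguments in the weak topology on $\ProbMeas(\bfX)$. The approach is to prove the large-deviation principle first on the algebraic dual (the projective limit of finite-dimensional marginals), and then identify the resulting rate function with $\RelEnt_{\phi_{\mathrm B}}(\cdot|\mu)$ via the Donsker--Varadhan variational formula.

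\textbf{Step 1: finite-dimensional projections.} For bounded continuous $f_1,\dots,f_k\in C_b(\bfX)$, the vector $\bra[\big]{\int f_j\,\rmd\rho^n}_{j}$ is an average of i.i.d.\ bounded random vectors in $\R^k$. Cramér's theorem in $\R^k$ gives an LDP with rate function equal to the Legendre transform of
\[
\Lambda(\lambda)=\log\int_\bfX \ee^{\sum_j\lambda_j f_j}\,\rmd\mu .
\]

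\textbf{Step 2: projective limit.} The Dawson--Gärtner theorem lifts these finite-dimensional LDPs to an LDP for $\rho^n$ in the weak topology generated by $C_b(\bfX)$. The rate function is
\[
J(\rho)=\sup_{f\in C_b(\bfX)}\set*{\int f\,\rmd\rho-\log\int \ee^{f}\,\rmd\mu}.
\]
To pass to the actual weak topology of $\ProbMeas(\bfX)$, I need exponential tightness. Since $\bfX$ is Polish, $\mu$ is tight. Choose compacts $K_\ell$ with $\mu(K_\ell^c)\le \ee^{-\ell^2}$. A Chebyshev bound on $\ee^{\sum_\ell \ell\, n\rho^n(K_\ell^c)}$ shows that the sets $\set{\rho:\rho(K_\ell^c)\le 1/\ell\ \forall \ell\ge L}$, which are compact by Prokhorov, carry all but exponentially small mass. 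By the inverse contraction principle, the LDP then holds in $\ProbMeas(\bfX)$ with its weak topology and good rate function $J$.

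\textbf{Step 3: identify $J=\RelEnt_{\phi_{\mathrm B}}(\cdot|\mu)$.} This is the Donsker--Varadhan duality. For the inequality $J\le\RelEnt$, take $\rho\ll\mu$ with $h=\rmd\rho/\rmd\mu$. Jensen's inequality gives $\log\int \ee^f\rmd\mu\ge \int (f-\log h)\,\rmd\rho$, which yields $\int f\,\rmd\rho-\log\int \ee^f\rmd\mu\le\int h\log h\,\rmd\mu$. For the reverse inequality $J\ge\RelEnt$, there are two cases. If $\rho\not\ll\mu$, choose a compact set $K$ with $\mu(K)=0<\rho(K)$ and take $f=M\bONE_K$, approximated by continuous functions. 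Letting $M\to\infty$ gives $J(\rho)=\infty$. If $\rho\ll\mu$, take $f=\log h$ truncated to $[-M,M]$, approximate it in $L^1(\rho+\mu)$ by $C_b$ functions, and pass to the limit using monotone and dominated convergence.

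I expect the main obstacle to be the technical part of Step 3, together with the exponential tightness in Step 2. Specifically, the approximation of the truncated $\log h$ by continuous bounded functions while controlling both $\int f\rmd\rho$ and $\log\int \ee^f\rmd\mu$ needs care, since $h$ may be unbounded and $\log h$ is not integrable a priori. The truncation must be handled so that monotone convergence applies separately on $\set{h\ge1}$ and $\set{h<1}$. Everything else reduces to invoking Cramér's theorem in $\R^k$ and the Dawson--Gärtner theorem from \cite{DemboZeitouni98}.
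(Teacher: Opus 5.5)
Your outline is correct and essentially the same as what the paper relies on: the paper gives no proof of its own and simply cites \cite[Sec.~6.2]{DemboZeitouni98}, where Sanov's theorem is derived along the same lines as yours---a Cram\'er-type LDP for $\rho^n$ with rate $\sup_{f\in C_b(\bfX)}\bigl\{\int f\,\rmd\rho-\log\int\ee^{f}\,\rmd\mu\bigr\}$, exponential tightness via Prokhorov, and the Donsker--Varadhan identification of this supremum with $\RelEnt_{\phi_{\mathrm B}}(\rho|\mu)$. One detail in Step~2 needs fixing: a single Chebyshev bound on $\exp\bigl(n\sum_\ell \ell\,\rho^n(K_\ell^c)\bigr)$ controls the sets $\{\rho:\rho(K_\ell^c)\le 1/\ell\ \forall\ell\ge L\}$ only if you take the $K_\ell$ nested (then $\rho(K_\ell^c)>1/\ell$ forces $\sum_{\ell'\le\ell}\ell'\rho(K_{\ell'}^c)>(\ell+1)/2$); otherwise use the per-$\ell$ bound $\Prob\bigl(\rho^n(K_\ell^c)>1/\ell\bigr)\le\ee^{-n\ell}\bigl(1+\ee^{\ell^2}\mu(K_\ell^c)\bigr)^n\le 2^n\ee^{-n\ell}$ together with a union bound over $\ell\ge L$.
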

\noindent
Much more can be found on large-deviation theory in e.g.~\cite{DeuschelStroock89,DemboZeitouni98,DenHollander00}.

\bigskip

With Sanov's theorem we can explain the form of the expressions in rows~\ref{ent:diffusion} and \ref{ent:FP} of Table~\ref{table:different-entropies}, as follows. In Example~\ref{ex:JKO} the Fokker-Planck equation~\eqref{eq:FPeq} describes the law of a Brownian particle in an energy landscape, which follows the SDE in $\R^d$
\begin{equation}
	\label{eqdef:Stoch-proc-X-FPeq}
\rmd X_t = -\nabla V(X_t)\dd t + \sqrt 2 \dd W_t.
\end{equation}
This process has an invariant measure on $\R^d$
\[
\mu(\rmd x) = \ee^{-V(x)}\dd x.
\]
As above, for simplicity we assume  that $\mu$ is normalizable and has unit mass.

Taking a sequence of independent copies $X_1, X_2, \dots$ of this process, each $X_i$ has the same invariant measure $\mu$; if each of them indeed is distributed according to $\mu$, then by Theorem~\ref{t:Sanov} and the discussion above we find that $\rho^n$ satisfies a large-deviation principle with rate function
\begin{equation}
	\label{eqdef:I-entropy-Sanov-example-FPeq}
I(\rho) = \RelEnt_{\phi_\mathrm B}(\rho|\mu) 
= \int_{\R^d} \rho(x)\bra[\big]{\log \rho(x) + V(x)}\dd x .
\end{equation}
This is the expression in~\eqref{eqdef:S-K-JKO} of Example~\ref{ex:JKO} and in rows~\ref{ent:diffusion} and~\ref{ent:FP} of Table~\ref{table:different-entropies}.

\begin{remark}[Signs of rate functions and entropies]
Rate functions are by construction non-negative, and the minimal value zero corresponds to the highest-probablity behaviour. Under the sign convention of this paper, entropies have the opposite sign, and the highest-probability behaviour corresponds to the \emph{largest} value of $\calS$. Therefore the entropies in Table~\ref{table:different-entropies} resemble the rate functions in this section but all with a minus sign. 
\end{remark}

\subsection{Interacting particles}
\label{ss:interacting-particles}

Sanov's theorem applies to independent particles and the empirical-measure map
\begin{equation}
	\label{eqdef:CG-map-empirical-measure}
(X_1,\dots, X_n) \longmapsto \rho^n := \frac1n \sum_{i=1}^n \delta_{X_i}.
\end{equation} 
If we relax the assumption of independence of the particles, then in many cases a large-deviation principle still holds, with a different rate function. The rate function typically is modified either by adding terms or by changing the function $\phi$. Interactions of `mean-field' type may lead to the addition of convolution integrals, while stronger interaction leads to a different choice of the function $\phi$ in $\RelEnt_\phi$.
We now illustrate this.

\paragraph{Mean-field interaction leads to addition of terms.}
The canonical example of mean-field-interacting particles is an extension of~\eqref{eqdef:Stoch-proc-X-FPeq} to $n$ particles $X^{n,i}$ that evolve with an interaction term that scales as $1/n$, 
\begin{equation}
	\label{eqdef:Stoch-proc-X-VFP}
\rmd X^{n,i}_t = - \frac1n\sum_{j=1}^n \nabla \psi(X_t^{n,i}-X_t^{n,j})\dd t  -\nabla V(X^{n,i}_t)\dd t + \sqrt 2 \dd W^i_t.
\end{equation}
If $\psi$ is smooth and bounded, then under the same integrability condition on $\ee^{-V}$ this process again has a stationary measure on the space $(\R^d)^n$ of $n$-particle positions, and consequently the empirical measure $\rho^n$ of these particles also has a stationary measure. As $n\to\infty$, in stationarity, $\rho^n$ also satisfies a large-deviation principle (e.g.~\cite{DawsonGartner87,HoeksemaHoldingMaurelliTse24}), with rate function
\begin{equation}
I(\rho) = \int_{\R^d} \rho(x)\bra[\Big]{\log \rho(x) + V(x) + \frac12 (\psi{*}\rho)(x)}\dd x.
\end{equation}
Note how including the interaction term in~\eqref{eqdef:Stoch-proc-X-VFP} leads to the \emph{addition} of a term in $I$; in particular, the $\rho\log\rho$-nature of the first term is not changed. This is a hallmark of mean-field or \emph{weak} interaction, in which each particle interacts with each other particle, with a strength that scales as $1/n$.

\paragraph{Strong interaction leads to different nonlinearities.} When the mean-field interaction described above is replaced by stronger forms of interaction, then the `$\rho\log\rho$'-type terms are replaced by different expressions.

For instance, the \emph{hard-rod system} consists of particles in $\R$ that undergo Brownian motion, except for the condition that they do not approach each other closer than distance $\e>0$. (The name `hard rods' comes from the interpretation that a `particle' is in fact a `hard rod' in $\R$ of length $\e$.) In the limit $n\to\infty$, $\e\downarrow 0$, with $n\e\to a$, Rost~\cite{Rost84} showed that the empirical measure $\rho^n$ of the particles converges to a time-dependent measure $\rho(t)$ on $\R$ that satisfies an equation of the form $\partial_t\rho = \Delta \phi(\rho)$. The right-hand side of this equation can be written as $K(\rho)\rmD S(\rho)$ as in~\eqref{eqdef:GS-Otto-Wasserstein}, where 
\[
\calS(\rho) = -\int \rho(x) \log \frac{\rho(x)}{1-a\rho(x)}\dd x.
\]
In~\cite{PeletierGavishNyquist21} we showed that this functional $\calS$ is the large-deviation rate function for the corresponding invariant measure for the particle system, after coarse-graining using the empirical-measure map~\eqref{eqdef:CG-map-empirical-measure}. 

Varadhan~\cite{Varadhan91} also studied systems of diffusion particles with continuous but `strong' interactions: the potential $\psi$ is scaled such that each particle $X_i$ typically interacts with $O(1)$ other particles $X_j$, each with an interaction strength of order $O(1)$. In the limit $n\to\infty$ he also obtained an entropy of the form $- \int s(\rho(x))\dd x$, where this particular function $s$ is determined by a `probabilistic cell problem'. The \emph{Zero Range Process}~\cite{KipnisLandim99,GrosskinskySchutzSpohn03,DirrStamatakisZimmer16} is yet another example where the large-deviation rate function of the empirical measure is of the form $-\int s(\rho(x))\dd x$ for some function $s:\R\to(-\infty,0]$.

\paragraph{Different coarse-graining maps and different invariant measures may lead to different nonlinearities.} The form of large-deviation rate functions depends both on the invariant measure of the particles and on the choice of coarse-graining map. In all the examples above, the stochastic particles move around in space, and their positions are transformed into a probability measure by the coarse-graining map~\eqref{eqdef:CG-map-empirical-measure}. Other processes naturally suggest a different coarse-graining map, and this also gives rise to a different structure for the rate function. 

For instance, the \emph{Brownian Energy Process} with parameter $2$~\cite{RedigVafayi11,PeletierRedigVafayi14} is an SDE for $n$ real-valued stochastic processes $X_i$, $i = 1,2,\dots ,n$. It is a microscopic model for heat conduction, in which energy is randomly exchanged between oscillators located at the points $i$ of a lattice. Its invariant measure is a product of exponential distributions  $\operatorname{Exp}(\eta_0)$ for some $\eta_0>0$, 
\begin{equation}
	\label{eqdef:mn-BEP}
\mu_n (\rmd x_1\cdots\, \rmd x_n)= \prod_{i=1}^n \mu_n^{\mathrm {single}}(\rmd x_i), \qquad\text{where $\mu_n^{\mathrm {single}} := \operatorname{Exp}(\eta_0)$.}
\end{equation}
For this system the coarse-graining map is 
\begin{equation}
	\label{eqdef:CG-map-heat-conduction}
(X_1,\dots, X_n) \longmapsto \eta^n := \frac1n\sum_{i=1}^n X_i \delta_{i/n} ,
\end{equation}
which should be interpreted as placing the energy $X_i$ at position $i$.
Note the difference with~\eqref{eqdef:CG-map-empirical-measure}: here the individual Diracs are fixed in space but change in amplitude, while in~\eqref{eqdef:CG-map-empirical-measure} they are fixed in amplitude but move around in space. 

The invariant measure $\mu_n$ in~\eqref{eqdef:mn-BEP} satisfies a large-deviation principle with rate function~\cite{PeletierRedigVafayi14}
\[
I(\eta) = \int_0^1 \bra*{\frac{\eta(s)}{\eta_0} - 1 - \log \frac{\eta(s)}{\eta_0}}\dd s \;=\; \RelEnt_{\phi}(\eta|\eta_0) \qquad \text{with }\phi(t) = t - 1- \log t.
\]
This function $I$ is of similar form as~\eqref{eqdef:GS-Otto-Wasserstein}.

\subsection{The formula `$\rho \log \rho$' comes from Stirling's approximation}
\label{ss:Stirling}
Although rate functions come in various different forms, as we have seen above, the expression `$\rho\log\rho$' appears so often that it is useful to understand where it comes from. In this section we explore this in a simple example (which is well known; see e.g.~\cite[Sec.~1.6]{Bishop06}). 

Let $J$ be a finite set, and consider $n$ particles in  $J$, i.e. consider a mapping $x:\{1, \dots, n\}\to J$. As in~\eqref{eqdef:Phi_n-empirical-measure} the empirical measure $\rho^n$ is defined as 
\[
\rho^n = \Phi_n(x) := \frac1n\sum_{i=1}^n \delta_{x(i)} \in \ProbMeas(J),
\]
and since $J$ is finite we can write this measure as $\rho^n \simeq(\rho_j)_{j\in J}$, with
\begin{equation}
\label{def:ki}
\rho_j = \frac {k_j}n, \qquad k_j := \#\bigl\{i\in \{1,\dots,n\}: x(i) = j\bigr\}.
\end{equation}
Here $k_j$ is the number of particles at $j\in J$, and $\rho_j$ is the average number of particles at~$j$.

In going from $x$ to $\rho$ there is loss of information;  multiple mappings $x$ produce the same empirical measure $\rho$, or equivalently, the coarse-graining map $\Phi: x\mapsto \rho$ is degenerate. The degree of degeneracy, the number of unique mappings $x$ that correspond to a given~$\rho$, is $n!\,\bigl(\prod_{j\in J}k_j!\bigr)^{-1}$. We now use Stirling's formula in the form
\[
\log m! = m\log m - m + o(m) \qquad\text{as } m\to\infty,
\]
to estimate
\begin{align*}
\log \pra[\bigg]{{n!} \, \bigl(\prod_{j\in J}k_i!\bigr)^{-1}} &= \log n! - \sum_{j\in J} \log k_j!\\
&= n\log n - n - \sum_{j\in J} (k_j\log k_j - k_j) + o(n)\\
&= -n\sum_{j\in J} \rho_j \log \rho_j + o(n) \qquad\text{as }n\to\infty.
\end{align*}
Here we see a similar interpretation as above: the entropy term $-\sum_{j\in J} \rho_j \log \rho_j$ is the rescaled limit, as $n\to\infty$, of the logarithm of the degeneracy.

\medskip
There is a natural probabilistic interpretation of the same insight. 
If we allocate particles independently at random to elements of $J$ with a common law $\mu\in \ProbMeas(J)$, then the probability of obtaining each microsate $x$ with corresponding macrostate $\rho = \Phi_n(x)$ is 
\[
\prod_{i=1}^n \mu_{x(i)} = \prod_{j\in J} \mu_j^{k_j} = \prod_{j\in J} \mu_j^{n\rho_j}.
\]
Since each of the possible microstates $x$ leading to the same macrostate $\rho$, i.e.\ each $x\in \Phi_n^{-1}(\rho)$, has the same probability, the probability of a macrostate $\rho$ satisfies
\begin{align*}
\log \Prob(\rho) &= \log \pra[\bigg]{n!\,\Bigl(\prod_{j\in J}k_j!\Bigr)^{-1}\prod_{j\in J} \mu_j^{n\rho_j}}\\
&=  - n \sum_{j\in J} \rho_j \log \rho_j + n\sum_{j\in J}\rho_j \log \mu_j + o(n) \qquad\text{as }n\to \infty.
\end{align*}
The first two terms on the right-hand side can be combined, leading to a Boltzmann relative entropy with respect to $\mu$:
\[
\sum_{j\in J} \rho_j \log \rho_j - \sum_{j\in J}\rho_j \log \mu_j
= \sum_{j\in J} \rho_j \log \bra[\Big]{\frac{\rho_j}{\mu_j}} = \RelEnt_{\phi_{\mathrm B}}(\rho|\mu).
\]

Summarizing, the expression `$\rho\log\rho$' arises from a counting argument, using Stirling's formula, applied to an empirical-measure-type coarse-graining map. The counting argument is relevant for the probabilistic interpretation because the particles are assumed to be independent.

\subsection{Gradient flows driven by entropies}
\label{ss:LDP-evolutions}

In the sections above we focused on  the interpretation of entropies as large-deviation rate functions of coarse-grained invariant measures. This showed that differences in the expressions for such entropies can be traced back to differences in the invariant measures of the underlying stochastic processes, or differences in choice of coarse-graining map, or both. 

This does not yet explain \emph{how} such entropies drive the limiting process, i.e.\ where the Onsager operator $K$ comes from that appears in all of the examples of Section~\ref{se:central_question}. The core example of Section~\ref{s:core-example} suggests that it is again the noise that should determine this; the operator $K$ should arise from some limiting process involving the noise. We now discuss another large-deviation result that shows how this is indeed the case, at least for gradient flows and some other examples.

For the case of reversible stochastic processes, leading to gradient flows, we have the following general  result\footnote{This result is formal because an expression of the form~\eqref{eq:I-GGS} requires a rigorous definition of $\calR(z,\dot z)$ and $\calR^*\bigl(z,\tfrac12 \rmD \calS(z)\bigr)$, which depends on the details of the situation. Examples of this are~\cite[Def.~4.1]{DawsonGartner87}, \cite[Ch.~1]{AmbrosioGigliSavare08}, \cite[Th.~8.1]{PeletierGavishNyquist21}, or~\cite[Sec.~2]{PeletierSchlottke22}.} from~\cite[Prop.~3.7]{MielkePeletierRenger14}.
\begin{Ftheorem}[Time-course large deviations lead to gradient structures]
\label{t:MPR}
Let  $Z^n$ be a sequence of continuous-time Markov processes in
a topological vector space $\bfZ$ that are reversible with respect to their stationary measures
$\mu^n\in \ProbMeas(\bfZ)$. Assume that the following two
large-deviation principles hold: 
\begin{enumerate}
\item The invariant measures $\mu^n$ satisfy a large-deviation
  principle with (negative) rate function $\calS:\bfZ\to\R$, i.e.
\[
\mu^n \sim \exp\bra{n\calS}, \qquad \text{as }n\to\infty 
\qquad\text{(note the sign of $\calS$);}
\]
\item The time courses $(Z^n_t)_{0\leq t\leq T}$ satisfy a large-deviation principle in
  $C([0,T];\bfZ)$ with rate function $\scrJ:C([0,T];\bfZ)\to[0,\infty]$,
  i.e.
\begin{equation}
\label{ldp:time-courses}
\Prob\bigl(Z^n \approx z \,\big|\, 
Z^n_0 \approx z(0)\bigr) \sim \exp\bigl(-n\scrJ(z)\bigr),
\qquad \text{as }n\to\infty.
\end{equation}
\end{enumerate}
Then $\scrJ$ can be written as 
\begin{equation}
\label{eq:I-GGS}
\scrJ(z) = \tfrac12 \calS(z(0)) - \tfrac12 \calS(z(T)) + \int_0^T \bigl[ \calR(z,\dot z) + \calR^*\bigl(z,\tfrac12 \rmD \calS(z)\bigr)\bigr]\dd t,
\end{equation}
for some symmetric \emph{dissipation potential} $\calR$.
\end{Ftheorem}

\medskip
A \emph{dissipation potential} $\bfZ\ti\bfZ\ni (z,\dot z)\mapsto \calR(z,\cdot z)$ is a function such that for each $z$, $\calR(z,\cdot)$ is convex and lower semicontinuous with $\min \calR(z,\cdot) = \calR(z,0) = 0$. Its Legendre dual is 
\[
\calR^*(z,\xi) := \sup_{\dot z} \langle \xi,\dot z\rangle - \calR(z,\dot z).
\]
By construction, the properties of $\calR$ and $\calR^*$ imply that the functional $\scrJ$ is non-negative, and that curves $z$
satisfying $\scrJ(z)=0$ are solutions of the \emph{generalized} gradient-flow equation
\begin{equation}
  \label{eq:GF-general-MPR}
  \dot z = \rmD_\xi \calR^*(z,\tfrac12 \rmD \calS(z)).
\end{equation}

The large-deviation principle~\eqref{ldp:time-courses} implies that  a
  sequence of realizations $Z^n$ of the stochastic process almost surely converges
  (along subsequences) to a curve $z$ satisfying $\scrJ(z)=0$. The
  property $\scrJ(z)=0$ therefore identifies the limiting behavior of the
  stochastic process $Z^n$ as solutions of the generalized gradient-flow equation~\eqref{eq:GF-general-MPR}.

In many cases the operator $\xi \mapsto \calR^*(z,\xi)$ is quadratic and non-negative, and can be written as $\calR^*(z,\xi) = \frac12 \langle \xi,K(z)\xi\rangle$ in terms of some symmetric and non-negative operator~$K$; in that case the generalized gradient-flow equation~\eqref{eq:GF-general-MPR} reduces to the more classic version,
\begin{equation}
	\label{eq:GF-MPR-OnsagerK}
\dot z = \frac12 K(z) \rmD\calS(z).
\end{equation}
This shows how the Onsager operator $K$ may arise as the dissipation potential $\calR$ in the large-deviation rate function for time courses. In particular, it is the type and the `colour' of the noise that determines $K$, or in the terminology of Maes~\cite{Maes20}, the `frenesy' of the stochastic process.

We now illustrate these ideas  with an example.

\begin{Ex}[Anisotropic Wasserstein gradient flows]
	\label{Ex:anisotropic-Wasserstein}
Consider the following an\-iso\-tropic modification of~\eqref{eqdef:Stoch-proc-X-FPeq}, the process in $\R^d$ given by
\begin{equation}
	\label{eqdef:Stoch-proc-X-FPeq-anisotropic}
	\rmd X_t = -\Sigma\Sigma^\top \nabla V(X_t)\dd t + \sqrt  2 \Sigma\dd W_t.
\end{equation}
Here $\Sigma\in \R^{d\times d}$ is an arbitrary matrix, which modulates the noise $\rmd W_t$ by amplifying it in some directions and reducing it in others. 

The appearance of $\Sigma\Sigma^\top$ in the drift term implies that this equation has the same invariant measure $\mu(\rmd x) = \ee^{-V(x)}\dd x$ as~\eqref{eqdef:Stoch-proc-X-FPeq}, regardless of $\Sigma$. Consequently, following the same steps---defining the entropy as the large-deviation rate function of empirical measures, where the points are i.i.d.\ distributed according to~$\mu$---we find the same characterization
\[
\calS(\rho) = - \int\rho(x)\bra[\big]{\log \rho(x) + V(x)}\dd x.
\]

Applying Theorem~\ref{t:MPR} (see~\cite[Sec.~6(c)]{AdamsDirrPeletierZimmer13}, which builds on the seminal results of Dawson and G\"artner~\cite{DawsonGartner87}) we find that the limiting evolution is a gradient flow of the form~\eqref{eq:GF-MPR-OnsagerK}, where $K$ is the `anisotropic Wasserstein' operator
\begin{equation}
	\label{eqdef:K-Sigma}
	K_\Sigma(\rho)\xi  := -\div \pra[\big]{\rho \Sigma\Sigma^\top \nabla \xi} \qquad \text{ for }\xi:\R^d\to\R.
\end{equation}
Note that for  $\Sigma = I$ this operator coincides with $K$ in~\eqref{eq:FPeq}. The resulting evolution equation is 
\[
\partial_t \rho = \frac12 \div \pra*{\rho \Sigma\Sigma^\top\nabla  \bra*{\log \rho +  V}} \qquad\text{in } \R^d.
\]
\end{Ex}
\MARK{We moeten ook even heuristisch motiveren waar de Wasserstein-$K$ vandaan komt.}

\subsection{\Generic systems driven by entropies and energies}

Theorem~\ref{t:MPR}  characterizes  gradient-flow structure in terms of large-deviation rate functions. For some \Generic systems this characterization also is known. 

\begin{Ex}[Rate-functional characterization of a \Generic system]
Duong, Peletier, and Zimmer~\cite{DuongPeletierZimmer13} consider the generalized Kramers equation for $\rho\in \calP(\R^{2d})$ and $e \in \R$,
\begin{subequations}
\label{pb:VFPExtended}
\begin{align}
\label{pb:VFPExtended:rho}
\partial_t \rho &= - \div_q\Bigl(\rho \frac pm\Bigr) + \div_p  \rho \Bigl(\nabla_q V + \nabla_q \psi * \rho + \gamma\frac pm \Bigr)+\gamma \theta \Delta_p \rho,\\[\jot]
\frac d{dt} e &= \gamma \int_{\R^{2d}} \frac{p^2}{m^2}\,\rho(dqdp) - \frac{\gamma \theta d}m.
\label{pb:VFPExtended:e}
\end{align}
\end{subequations}
Here $\gamma$, $\theta$, and $m$ are positive parameters, $V$ and $\psi$ are appropriate functions on $\R^{2d}$, and the state space $\R^{2d}$ has coordinates $(q,p)$. The authors show that this system can be written in the \Generic form~\eqref{eq:GENERIC} for the unknown $z:= (\rho,e)\in \calP(\R^{2d})\ti\R$, where the components $J$, $K$, $\calE$, and $\calS$ can all be expressed in the elements of~\eqref{pb:VFPExtended} (see~\cite[(30)]{DuongPeletierZimmer13}).

The authors also show that solutions of~\eqref{pb:VFPExtended} satisfy the equation $J(z)=0$ for the rate function
\begin{equation}
\label{def:J2}
2\theta J(z) := \calS(z(T)) - \calS(z(0)) + \frac12 \int_0^T \Bigl[ \|\dot z-  J \rmD \calE(z) \bigr\|_{ K^{-1}(z)}^2+ \|  \rmD \calS(z)\bigr\|_{ K(z)}^2 \Bigr] \, dt.
\end{equation}
This rate function describes the large deviations of a more microscopic system of $N$ particles $(Q_i,P_i)$ in the limit $N\to\infty$. In the expression~\eqref{def:J2} one recognizes the same structure~\eqref{eq:I-GGS} of the gradient-flow case, with a particular modification: the potential $\calR(z,\dot z) = \frac12 \|\dot z\|_{K^{-1}(z)}^2$ is applied not simply to the time derivative $\dot z$, but to the `biased' time derivative $\dot z -  J \rmD \calE$ instead. 
\end{Ex}

\subsection{Discussion}

Collecting the insights of this section and the previous one, we recognize a general division of roles:
\begin{enumerate}
\item 
The entropy arises from differences in degeneracy, either at finite noise and parameter levels (Section~\ref{s:core-example} and also in Section~\ref{s:damped-pendulum} below) or in a large-deviation limit (as in this section). In the examples of this section the degeneracy results from the transformation map: for instance, from a vector of positions $(X_1,\dots,X_n)$ to the empirical measure of those $n$ positions, or from a vector of energies $(X_1,\dots,X_n)$ to a discrete `energy measure'~\eqref{eqdef:CG-map-heat-conduction}. 
The typical `$\rho\log \rho$' expression results from Stirling's approximation in the independent case (Section~\ref{ss:Stirling}), and other forms result from strong interaction (Section~\ref{ss:interacting-particles}).

\item The Onsager operator $K$, which converts the derivative of the entropy into a drift in the equation, arises from the structure and the strength of the noise. In Example~\ref{Ex:anisotropic-Wasserstein} this is visible in the noise matrix $\Sigma$ which enters the definition of $K$; in a general sense, the dissipation potentials $\calR$ and $\calR^*$ are a direct reflection of the structure of the noise. Even in the core example of Section~\ref{s:core-example} we observe the dependence of $K$ on the noise,  in the implicit expression $K = 1/\beta$ in the drift term in equation~\eqref{eq:CG-example-SDE-Y}. 
\end{enumerate}

\MARK{We now need to go back and make sure that up to here those questions that can be answered have been answered. Would we want to make an explicit list of all the questions, now interspresed with our answers? }

\section{The example of the damped oscillator}
\label{s:damped-pendulum}

We now turn to Example~\ref{ex:damped-harmonic-oscillator}, which has the \Generic structure
\begin{equation}
	\label{eq:damped-oscillator-eq-repeated}
\frac{\dd}{\dd t} \begin{pmatrix}
	Q \\ P \\e
\end{pmatrix}
= 
\underbrace{\begin{pmatrix}
	0 & 1 & 0 \\ 
	-1& 0 & 0 \\
	0 & 0 & 0
\end{pmatrix}}_{J}
\underbrace{\begin{pmatrix}
	kQ \\ P/m \\ 1
\end{pmatrix}}_{\rmD\calE}
+
\underbrace{\frac\gamma\beta \begin{pmatrix}
	0 &    0 & 0\\
	0 &    1 & -P/m\\
	0 & -P/m & P^2/m^2\\
\end{pmatrix}}_{K}
\underbrace{\begin{pmatrix}
	0 \\ 0 \\ \beta 
\end{pmatrix}}_{\rmD \calS},
\end{equation}
with energy and entropy
\begin{equation}
	\label{eqdef:GENERIC-E-S-repeated}
	\calE(Q,P,e) := \frac k2 Q^2 + \frac{P^2}{2m} + e
	\qquad \text{and}\qquad
	\calS(Q,P,e) = \beta e.
\end{equation}

In this section we focus on how we should understand the formula `$\calS = \beta e$', and how and why this particular function drives the evolution equation. 
This understanding will proceed along the following lines:
\begin{enumerate}
	\item We start by inventing a Hamiltonian system in a larger state space with $2+2n$ variables $(Q,P,q,p)$, in such a way that $(Q,P,e)$ can be considered as a coarse-grained version of $(Q,P,q,p)$ (Section~\ref{ss:setup-micro-HamSys}).
	\item We let the dynamics of this Hamiltonian system start from  random initial data. The initial data are  drawn from a microcanonical measure that is invariant under the Hamiltonian flow (Section~\ref{ss:HamSys-random-initial-data}).
	\item We characterize the coarse-grained version of this microcanonical measure, and show how entropy appears (Section~\ref{ss:CG-CGinvmeas-entropy-ExA}).
	\item We study the coarse-grained dynamics in the limit $n\to\infty$, and show that it converges to a noisy version of the dynamics~\eqref{eq:damped-oscillator-eq-repeated} of Example~\ref{ex:damped-harmonic-oscillator} (Section~\ref{ss:HamSys-dynamics}).
\end{enumerate}

\subsection{A larger, Hamiltonian system}
\label{ss:setup-micro-HamSys}

The \Generic system with variables $(Q,P,e)$ above will be understood by considering it as the coarse-grained version of a purely Hamiltonian system with variables $(Q,P,q,p)$. Using notation similar to that in~\eqref{eq:X-to-Y-coarse-graining} we can write that we are looking for a system with variables $(Q,P,q,p)$ and a map $\Phi_n$ such that
\begin{equation}
	\label{eqdef:CG-map-Phi_n}
	(Q,P,q,p) 
	\quad
	\xrightarrow{\quad \Phi_n\quad }
	\quad 
	(Q,P,e).
\end{equation}

We choose a setup for $(Q,P,q,p)$ of `system plus heat bath' that has been used and studied many times; see e.g.~\cite{FordKacMazur65,CaldeiraLeggett81,Zwanzig01,MielkePeletierZimmer25}. The full system is a Hamiltonian system that consists of two coupled subsystems, each a Hamiltonian system in its own right. Figure~\ref{fig:setup-HamSys} illustrates this. 
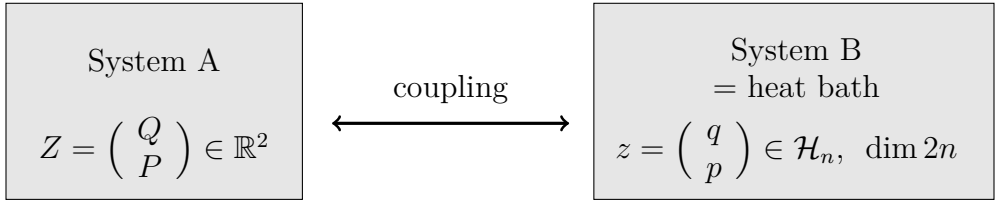
\begin{figure}[h]
	\centering
\begin{tikzpicture}[scale=0.65]
	\begin{scope}[xshift=-5.0cm] 
	\draw[fill= gray!20] (-4,0) rectangle (2,4);
	\node[color=black] at (-1,2.8){System~A};
	\node[color=black] at (-1,1){$Z = \Big(\begin{array}{c}Q\\P \end{array}\Big) \in \R^2$};
	\end{scope}
	\begin{scope}[yshift=10]
	\draw[very thick,<->] (-2.4,1.2)--(2.4,1.2);
	\node[color=black] at (0,1.9){coupling}; 
	\end{scope}
	
	\begin{scope}[xshift=5.0cm] 
	\draw[fill= gray!20] (-2.1,0) rectangle (6,4);
	\node[color=black] at (2,3){System~B};
	\node[color=black] at (2,2.3){$=$ heat bath};
	\node[color=black] at (1.8,1){$z = \Big(\begin{array}{c}q\\p \end{array}\Big)\in \calH_n$, \ $\dim 2n$};
	\end{scope}
\end{tikzpicture}
\caption{The setup of the Hamiltonian system consisting of two subsystems, `System A' and a heat bath (`System B').}
\label{fig:setup-HamSys}
\end{figure}

System A has the same variables $Z = (Q,P)$ as in Example~\ref{ex:damped-harmonic-oscillator}, with the same interpretation as position and momentum of a harmonic oscillator. The Hamiltonian for System A is
\[
H_A(Q,P) := \frac k2 Q^2 + \frac1{2m}P^2.
\]
On its own, System  A would have the evolution
\[
\dot Z = \underbrace{\begin{pmatrix}
	0 & 1  \\ 
	-1& 0  
\end{pmatrix}}_{J_A}
\underbrace{\begin{pmatrix}
	kQ \\ P/m 
\end{pmatrix}}_{\rmD H_A}
\qquad\text{i.e.}\qquad
\dot Q = \frac Pm , \quad \dot P = -kQ.
\]
Note that this evolution coincides with that of~\eqref{eq:damped-oscillator-eq-repeated} when $\gamma=0$.

System B is a separate Hamiltonian system consisting of $n$ independent harmonic oscillators. The state $z = (q,p) = (q_1,\dots,q_n,p_1,\dots,p_n)$ is an element of $\calH_n := \R^{2n}$. We equip the space $\calH_n$ with the norm 
\[
\|(q,p)\|_{\calH_n}^2 := \Delta \omega\sum_{j=1}^n \bra*{q_j^2 + p_j^2}.
\]
Note how this norm and the corresponding inner product are scaled by $\Delta\omega$. One way to interpret this expression is that the sequences $q$ and $p$ are discretizations of functions $\sfq$, $\sfp$ of frequency $\omega\in(0,\infty)$ with discretization step $\Delta \omega$, and $\|(q,p)\|_{\calH_n}^2$ is an approximation of the $L^2$-norm $\int_0^\infty \bra[\big]{\abs{\sfq(\omega)}^2 + \abs{\sfp(\omega)}^2}\dd\omega$. We choose $\Delta \omega$ to be a function of $n$ such that 
\[
\Delta \omega \longrightarrow 0 \qquad\text{as }n\to\infty.
\]
We do not indicate the $n$-dependence in $\Delta \omega$, to keep the notation light. 

The Hamiltonian of System  B is defined to be
\begin{equation}
	\label{eqdef:HB}
H_B(q,p) := \frac12 \Delta \omega\sum_{j=1}^n \bra*{q_j^2 + p_j^2} = \frac12 \|(q,p)\|_{\calH_n}^2,
\end{equation}
and the symplectic operator $J_B$ is chosen to be
\begin{equation}
\label{eqdef:JB-heat-bath}
J_B := \begin{pmatrix}
	&&&&\omega_1&\\
	&&&&& \ddots \\
	&&&&&& \omega_n\\
	-\omega_1 &&&\\
	& \ddots & \\
	&& -\omega_n
\end{pmatrix},
\end{equation}
with frequencies $\omega_j := j\Delta\omega>0$. 
Since\footnote{We consider $\rmD H_B$ to be the \emph{gradient} of $H_B$ in the space $\calH_n$, i.e.\ the Riesz representative of the Fr\'echet derivative. This explains why the factor $\Delta \omega$ in~\eqref{eqdef:HB} does not appear in $\rmD H_B$. If one prefers to use the usual gradient in $\R^{2n}$, then the vector $\rmD H_B$ has a factor $\Delta\omega$, and the coefficients of $J_B$ should be rescaled accordingly.} $\rmD H_B(z) = z$, the equation describing the heat-bath evolution in isolation is
\begin{equation}
\dot z = J_B z,
\qquad\text{or equivalently}\qquad
\label{eq:evol-eq-heat-bath}
\dot q_j = \omega_j p_j, \qquad \dot p_j = -\omega_j q_j, \qquad \text{for }j=1,\dots , n.
\end{equation}
One recognizes in these equations that each pair $(q_j,p_j)$ oscillates at its own frequency~$\omega_j$. 

Finally, the two subsystems are coupled in the following manner: the zero point of System B is shifted by a quantity $C_n Z$, where $C_n: \R^2 \to \calH_n$ is the operator
\begin{equation}
	\label{eqdef:C_n-choice}
C_n\begin{pmatrix} Q\\P\end{pmatrix}
:= \sqrt{\frac{2\gamma}\pi}\,Q\begin{pmatrix} 
	1\\\vdots \\ 1 \\ 0 \\ \vdots \\ 0 
\end{pmatrix}, 
\quad\text{for some $\gamma>0$, with adjoint}\quad 
C_n^*\begin{pmatrix} q\\p\end{pmatrix}
= \bra*{ \sqrt{\frac{2\gamma}\pi}\Delta \omega\sum_{j=1}^n q_j\ , \ 0 \ }.
\end{equation}
The particular form of the prefactor $\sqrt{2\gamma/\pi}$ is chosen to simplify the formulas later. 

\medskip
Combining these specifications of Systems A and B and the coupling leads to the total Hamiltonian (again writing $Z=(Q,P)$ and $z=(q,p)$ for brevity)
\begin{equation}
	\label{eqdef:Htotal}
H_{\mathrm{total}}(Z,z) := H_A(Z) + {\frac12 \|z - C_n Z\|_{\calH_n}^2}.
\end{equation}
Using the same symplectic operators $J_A$ and $J_B$ we then find the equations
\begin{subequations}
	\label{eq:Zz-coupled-system}
\begin{align}
\dot Z &= J_A \bra[\big]{\rmD H_A(Z) - C_n^* (z-C_nZ)}\label{eq:dot-Z}\\
\dot z &= J_B \bra*{z - C_n Z }.
\label{eq:dot-z}
\end{align}
\end{subequations}
The system~\eqref{eq:Zz-coupled-system} can also be written in coordinates as
\begin{alignat*}2
	\dot Q &= \frac Pm, \qquad & \dot P & = -kQ + c\,\Delta \omega \sum_{j=1}^n  \bra*{q_j -c\, Q},\\
	\dot q_j &= \omega_j p_j, \qquad & \dot p_j &= -\omega_j \bra*{q_j- c\,Q}.
\end{alignat*}

\subsection{Random initial data}
\label{ss:HamSys-random-initial-data}

We now introduce randomness, in the form of random initial data. Fix $\beta>0$, $n\in \N$, and $\calE_0\in\R$, and define the positive measure  $\mu_{\beta,n,\calE_0}$ on $\R^2\times \calH_n$ by
\begin{subequations}
	\label{eqdef:mu-beta-n-E0}
\begin{align}
\mu_{\beta,n,\calE_0}(\rmd Z\rmd z)
&:= \pdelta\pra*{H_{\mathrm{total}}(Z,z) - \frac n\beta -\calE_0}(\rmd Z\rmd z)\\
&= \pdelta\pra*{H_A(Z) + \frac12 \|z - C_n Z\|_{\calH_n}^2 - \frac n\beta -\calE_0}(\rmd Z\rmd z).
\end{align}
\end{subequations}
The measure $\mu_{\beta,n,\calE_0}$ is the microcanonical measure concentrated on the energy level $n/\beta  + \calE_0$.
This choice of energy level ensures that as $n\to\infty$ the total energy of the full system scales as $n/\beta + O(1)$. Since for large $n$ almost all the energy will be stored in the heat bath, this implies that each of the $2n$ scalar heat-bath variables will have on average energy $1/2\beta$. From a statistical-physics standpoint, this is the energy per degree of freedom that corresponds to temperature $1/\beta$. 

Note that the measure $\mu_{\beta, n,\calE_0}$ is finite and non-negative, but not normalized, and this will be important in the discussion below. We define $\sfZ_{\beta,n,\calE_0}$ to be the normalization constant, i.e.
\[
\sfZ_{\beta,n,\calE_0} := \mu_{\beta, n,\calE_0}(\R^2\times \calH_n),
\]
such that $(\sfZ_{\beta,n,\calE_0})^{-1}\mu_{\beta, n,\calE_0}$ is a probability measure on $\R^2\times\calH_n$. Below we will write  statements such as ``the random variable $X$ has law $\mu$'' or ``$X\sim \mu$'' even if $\mu$ is not normalized, in which case normalization is implicitly implied.

\begin{lemma}[Invariance under Hamiltonian flow]
	\label{l:mu-beta-n-E0-is-invariant}
The measure $\mu_{\beta,n,\calE_0}$ is invariant under the flow~\eqref{eq:Zz-coupled-system}, i.e.\ if $(Z_t,z_t)$ evolve by~\eqref{eq:Zz-coupled-system}, then
\[
(Z_0,z_0) \sim \mu_{\beta,n,\calE_0} \qquad 
\Longrightarrow
\qquad \forall t\in \R: \ (Z_t,z_t) \sim \mu_{\beta,n,\calE_0}.
\]
\end{lemma}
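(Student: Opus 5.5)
The plan is to combine two classical facts: Liouville's theorem for the Hamiltonian flow~\eqref{eq:Zz-coupled-system}, and conservation of $H_{\mathrm{total}}$ along that flow. Let $\Psi_t$ be the flow map, $(Z_0,z_0)\mapsto(Z_t,z_t)$. The system is linear with constant coefficients, so $\Psi_t$ is a globally defined linear map for all $t\in\R$. The claim is then $(\Psi_t)_\#\mu_{\beta,n,\calE_0}=\mu_{\beta,n,\calE_0}$. By the defining property~\eqref{eqdef:pdelta}, this reduces to showing
\[
\int \varphi(\Psi_t(w))\,\bONE\{y\le H_{\mathrm{total}}(w)<y+h\}\,\rmd w = \int \varphi(w)\,\bONE\{y\le H_{\mathrm{total}}(w)<y+h\}\,\rmd w
\]
for every $h>0$, with $y=n/\beta+\calE_0$; dividing by $h$ and letting $h\downarrow0$ then gives the result.

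\emph{Step 1: energy conservation.} We write the flow as $\dot w = \mathbb J\,\nabla_{\calH}H_{\mathrm{total}}(w)$, where $\mathbb J=\operatorname{diag}(J_A,J_B)$ and the gradient is taken in the inner product of $\R^2\times\calH_n$. This inner product is the Euclidean one on $\R^2$ and $\Delta\omega$ times the Euclidean one on $\calH_n$. Comparing with~\eqref{eq:dot-Z}--\eqref{eq:dot-z} confirms that $\nabla H_{\mathrm{total}}=(\rmD H_A - C_n^*(z-C_nZ),\, z-C_nZ)$. Then $\frac{\rmd}{\rmd t}H_{\mathrm{total}}=\langle \nabla H,\mathbb J\nabla H\rangle=0$, because $J_A$ and $J_B$ are antisymmetric with respect to these inner products. $J_B$ has the required antisymmetric block form, and the weight $\Delta\omega$ is a scalar multiple on the whole $\calH_n$ block, so it does not spoil this. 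Hence $H_{\mathrm{total}}\circ\Psi_t=H_{\mathrm{total}}$, and therefore $\bONE\{y\le H_{\mathrm{total}}(\Psi_t w)<y+h\}=\bONE\{y\le H_{\mathrm{total}}(w)<y+h\}$.

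\emph{Step 2: Lebesgue measure is preserved.} The vector field $F(w)=\mathbb J\nabla H_{\mathrm{total}}(w)$ is linear, $F(w)=\mathbb J A w$, with $A$ symmetric with respect to the weighted inner product. In Euclidean coordinates this reads $\mathbb J G^{-1}\tilde A$, where $G$ is the diagonal weight matrix and $\tilde A$ is Euclidean-symmetric. We need to show its trace vanishes; equivalently, $\div F=0$. The most direct route is to compute in coordinates from the displayed component equations. There, $\partial\dot Q/\partial Q=0$, $\partial\dot P/\partial P=0$, $\partial\dot q_j/\partial q_j=0$ and $\partial\dot p_j/\partial p_j=0$, so $\div F=0$. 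By Liouville's formula, $\det D\Psi_t=\exp(t\operatorname{tr}(\mathbb JA))=1$.

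\emph{Step 3: conclude.} Substitute $w'=\Psi_t(w)$ in the left-hand integral. The Jacobian is one, and by Step 1 the indicator is unchanged. The left-hand side therefore equals the right-hand side for every $h$, and the limit $h\downarrow 0$ gives the claim. This works both for the heuristic definition~\eqref{eqdef:pdelta} and for the rigorous version in Appendix~\ref{app:pdelta}, since that version is characterized by the same coarea/limit identity. The invariance extends from $\varphi\in C_c$ to all bounded measurable $\varphi$ by density. This density extension is the only place where care is needed. The $\Delta\omega$-scaled inner product also causes a minor bookkeeping issue, but the coordinate-form divergence computation sidesteps it entirely, so I expect no real obstacle.
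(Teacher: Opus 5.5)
Your proof is correct and follows the same route as the paper. The paper's one-line argument is exactly that the flow preserves Lebesgue measure and every indicator $\bONE\{a\leq H_{\mathrm{total}}\leq b\}$, and that these two facts, inserted into the limit definition~\eqref{eqdef:pdelta} of $\pdelta$, give invariance; you supply the details the paper leaves implicit: the coordinate divergence computation with Liouville's formula, the energy-conservation check in the $\Delta\omega$-weighted inner product, and the change of variables $w'=\Psi_t(w)$.
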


\begin{proof}
The conservation of $\mu_{\beta,n,\calE_0}$ by the flow follows from the definition~\eqref{eqdef:pdelta} and the fact that the flow conserves both the Lebesgue measure $\rmd Z \rmd z$ and all characteristic functions of the form
\[
\bONE\{a\leq H_{\mathrm{total}}(Z,z)\leq b\}(Z,z).
\qedhere
\]
\end{proof}

With this microcanonical invariant measure, we can define a stochastic process $t\mapsto (Z_t,z_t)$. We define two versions, which differ in their initial datum for $Z$. In both cases we consider fixed values of $\beta>0$, $n\in\N$, and $\calE\in\R$.

\begin{definition}[Microscopic stochastic process, stationary version]
	\label{def:microscopic-process-stationary}
	At $t=0$ we sample $(Z_0,z_0)$ from $\mu_{\beta,n,\calE_0}$; the value of $(Z,z)$ at time $t$ is given by solving the deterministic Hamiltonian evolution~\eqref{eq:Zz-coupled-system} to time $t$, from starting point $(Z_0,z_0)$.
\end{definition}

For the second version we need the conditional measure of $\mu_{\beta,n,\calE_0}$, conditioned on the value of $Z$. This measure is defined by disintegrating the probability measure $(\sfZ_{\beta,n,\calE_0})^{-1}\mu_{\beta, n,\calE_0}$:
\[
\frac1{\sfZ_{\beta,n,\calE_0}}\mu_{\beta, n,\calE_0}(\rmd Z \rmd z) 
=: \wh \mu_{\beta,n,\calE_0}(\rmd Z )\, \overline\mu_{\beta,n,\calE_0}(\rmd z|Z).
\]
By construction, $\wh \mu_{\beta,n,\calE_0}$ is a probability measure on $\R^2$ and, for each $Z\in\R^2$, $\overline\mu_{\beta,n,\calE_0}(\,\cdot\, |Z)$ is a probability measure on $\calH_n$.

\begin{definition}[Microscopic stochastic process, version with prescribed $Z_0$]
	\label{def:microscopic-process-given-Z0}
	Fix $Z_0\in \R^2$. 	At $t=0$ we sample $z_0$ from the conditional stationary measure $\overline\mu_{\beta,n,\calE_0}(\,\cdot\,|Z_0)$, conditioned on the value of $Z_0$; the value of $(Z,z)$ at time $t$ is given by solving the deterministic Hamiltonian evolution~\eqref{eq:Zz-coupled-system} to time $t$, from starting point $(Z_0,z_0)$.
\end{definition}

In the first version, the process $t\mapsto (Z_t,z_t)$ is stationary, and the initial data~$Z_0$ and~$z_0$ are both random. In the second version, $Z_0$ is fixed, $z_0$ is random, and the process is not stationary in time. 

We have the following characterization of the conditional stationary measure. 
\begin{lemma}[Conditional stationary measure]
	\label{l:char-cond-stat-measure}
For given $\beta>0$, $n\in \N$, and $e\in \R$, define the following measure on $\calH_n$,
\begin{equation}
	\label{eqdef:nu-beta}
\nu_{\beta,n,e}(\rmd z) := \pdelta_{\calH_n}\pra[\Big]{\frac12 \|z\|_{\calH_n}^2 - \frac n\beta - e}(\rmd z).
\end{equation}
Fix $Z_0\in\R^2$. 
Then we have the following equivalent characterizations of the random variable~$z_0$:
\begin{equation}
	\label{char:conditional-stationary-measure}
z_0\sim \overline \mu_{\beta,n,\calE_0}(\,\cdot\,|Z_0)
\quad\Longleftrightarrow\quad 
z_0 - C_n Z_0 \sim \nu_{\beta,n,\calE_0-H_A(Z_0)}.
\end{equation}
\end{lemma}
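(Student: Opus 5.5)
The plan is to compute the disintegration of $\mu_{\beta,n,\calE_0}$ with respect to $Z$ directly from the heuristic definition~\eqref{eqdef:pdelta}, and then use translation invariance of Lebesgue measure on $\calH_n$.

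Take test functions $\varphi\in C_c(\R^2)$ and $\psi\in C_c(\calH_n)$. By~\eqref{eqdef:pdelta},
\[
\int \varphi(Z)\psi(z)\,\mu_{\beta,n,\calE_0}(\rmd Z\rmd z)
= \lim_{h\downarrow0}\frac1h\int_{\R^2}\varphi(Z)\int_{\calH_n}\psi(z)\,\bONE\Bigl\{0\le H_A(Z)+\tfrac12\|z-C_nZ\|_{\calH_n}^2-\tfrac n\beta-\calE_0<h\Bigr\}\dd z\,\rmd Z .
\]
For fixed $Z$ we substitute $w:=z-C_nZ$ in the inner integral. This is a translation, so its Jacobian is one. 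The indicator then becomes $\bONE\{0\le \frac12\|w\|^2-\frac n\beta-(\calE_0-H_A(Z))<h\}$. Divided by $h$ and in the limit, the inner integral is therefore $\int\psi(w+C_nZ)\,\nu_{\beta,n,\calE_0-H_A(Z)}(\rmd w)$, by definition~\eqref{eqdef:nu-beta}.

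To pass the limit inside the $Z$-integral we use dominated convergence. The inner quantities are uniformly bounded in $h$ and in $Z\in\supp\varphi$, because they are bounded by $\|\psi\|_\infty$ times the normalized volume of a thin spherical shell in $\R^{2n}$, which has an explicit form. This yields the disintegration
\[
\mu_{\beta,n,\calE_0}(\rmd Z\rmd z)=\rmd Z\;\bigl[(\tau_{C_nZ})_\#\nu_{\beta,n,\calE_0-H_A(Z)}\bigr](\rmd z),
\]
where $\tau_a(w):=w+a$. The $Z$-marginal is $m(Z)\,\rmd Z$ with $m(Z):=\nu_{\beta,n,\calE_0-H_A(Z)}(\calH_n)$; this mass is a finite multiple of a sphere's area when the level $\frac n\beta+\calE_0-H_A(Z)$ is positive, and zero otherwise. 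So $\wh\mu_{\beta,n,\calE_0}(\rmd Z)=\sfZ^{-1}m(Z)\,\rmd Z$. By uniqueness of disintegration ($\wh\mu$-a.e.), the conditional measure $\overline\mu_{\beta,n,\calE_0}(\cdot|Z_0)$ is then the normalized push-forward of $\nu_{\beta,n,\calE_0-H_A(Z_0)}$ under $w\mapsto w+C_nZ_0$. This is exactly the statement that $z_0-C_nZ_0\sim\nu_{\beta,n,\calE_0-H_A(Z_0)}$, using the paper's convention of implicit normalization.

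The main obstacle is rigor rather than computation. Two points need care: the interchange of the $h\to0$ limit with the $Z$-integral, and the fact that the conditional measure is only defined $\wh\mu$-a.e., whereas the statement is phrased for every fixed $Z_0$. I would handle the first with the rigorous construction of $\pdelta$ in Appendix~\ref{app:pdelta}, for instance via the coarea formula applied to $(Z,z)\mapsto H_{\mathrm{total}}$, which has nonvanishing gradient off a null set. For the second, I would note that the formula above gives a version of the disintegration that is weakly continuous in $Z$ wherever $m(Z)>0$, and take that continuous version as the definition of $\overline\mu(\cdot|Z_0)$ for every $Z_0$. The degenerate case $m(Z_0)=0$ is empty on both sides and can be excluded or declared vacuous.
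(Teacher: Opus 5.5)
Your proposal is correct. Its overall strategy matches the paper's: test against functions of $(Z,z)$, translate $z\mapsto z-C_nZ$, identify $\mu_{\beta,n,\calE_0}$ as $\rmd Z$ times the translated fibre measure $\nu_{\beta,n,\calE_0-H_A(Z)}$, and read off the conditional measure by comparing disintegrations. The genuine difference is how you obtain the fibre identity.

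The paper stays inside its $\pdelta$-calculus. After the translation it uses the product rule for $\pdelta$'s (Lemma~\ref{l:delta-times-delta}, identity~\eqref{eq:product-rule-for-deltas}) to split the delta into $\int \rmd e\,\pdelta[H_A(Z_0)-e](\rmd Z_0)\,\pdelta[\tfrac12\|\overline z_0\|^2-\tfrac n\beta-\calE_0+e](\rmd\overline z_0)$. It then formally sets $e=H_A(Z_0)$ inside the second delta, and integrates out $e$ with the defining identity~\eqref{eqdef:pdelta-LRS} to produce $\rmd Z_0$.

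You instead apply Fubini directly to the thin-shell definition~\eqref{eqdef:pdelta} for $H_{\mathrm{total}}$, slicing in $Z$ first. You then pass to the limit $h\downarrow0$ by dominated convergence, using the explicit bound on the volume of a thin spherical shell.

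The paper's route is a short formal computation that needs no estimates and reuses the same product lemma as Lemma~\ref{l:CG-inv-meas}. Your route is more elementary and self-contained, and it gains three things:
\begin{itemize}
\item it avoids the formal substitution $e=H_A(Z_0)$;
\item it avoids the ``for almost every level'' caveat attached to Lemma~\ref{l:delta-times-delta};
\item it deals explicitly with the fact that the conditional measure is a priori defined only $\wh\mu_{\beta,n,\calE_0}$-a.e. You do this by selecting the weakly continuous version and setting aside the degenerate case $m(Z_0)=0$. The paper does not address this point; it simply ``removes the integral over $Z_0$''.
\end{itemize}
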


\begin{proof}
The characterization above follows by the calculation
\begin{align*}
\MoveEqLeft\sfZ_{\beta,n,\calE_0}\int F(Z_0) G(z_0) \wh\mu_{\beta,n,\calE_0}(\rmd Z_0) \overline \mu_{\beta,n,\calE_0}(\rmd z_0|Z_0)=\\
&= \int F(Z_0) G(z_0) \mu_{\beta,n,\calE_0}(\rmd Z_0\rmd z_0)\\
&=\int F(Z_0) G(z_0) \pdelta\pra*{H_A(Z_0) + \frac12 \|z_0 - C_n Z_0\|_{\calH_n}^2 - \frac n\beta -\calE_0}(\rmd Z_0\rmd z_0)\\
&=\int F(Z_0) G(\overline z_0 + C_n Z_0) \pdelta\pra*{H_A(Z_0) + \frac12 \|\overline z_0 \|_{\calH_n}^2 - \frac n\beta -\calE_0}(\rmd Z_0\rmd \overline z_0)\\
&\leftstackrel{\eqref{eq:product-rule-for-deltas}}= \int \rmd e \int F(Z_0) G(\overline z_0 + C_n Z_0) 
\pdelta\pra*{H_A(Z_0) -e}(\rmd Z_0)
\pdelta\pra*{ \frac12 \|\overline z_0 \|_{\calH_n}^2 - \frac n\beta -\calE_0 + e}(\rmd \overline z_0)\\
&= \int \rmd e  \int F(Z_0) G(\overline z_0 + C_n Z_0) 
\pdelta\pra*{H_A(Z_0) -e}(\rmd Z_0)\\
&\hspace*{7cm} \times \, 
\pdelta\pra*{ \frac12 \|\overline z_0 \|_{\calH_n}^2 - \frac n\beta -\calE_0 + H_A(Z_0)}(\rmd \overline z_0)\\
&\leftstackrel{\eqref{eqdef:pdelta-LRS}}= \int F(Z_0) G(\overline z_0 + C_n Z_0) 
\pdelta\pra*{ \frac12 \|\overline z_0 \|_{\calH_n}^2 - \frac n\beta -\calE_0 + H_A(Z_0)}(\rmd \overline z_0)\dd Z_0\\
&= \int F(Z_0) G(\overline z_0 + C_n Z_0) \,\nu_{\beta,n,\calE_0-H_A(Z_0)}(\rmd\overline z_0)\dd Z_0.
\end{align*}
Comparing the first and last line and removing the integral over $Z_0$ we find~\eqref{char:conditional-stationary-measure}. 
\end{proof}

\subsection{Coarse-graining map, coarse-grained invariant measure, and entropy}
\label{ss:CG-CGinvmeas-entropy-ExA}

We now construct the coarse-graining map~$\Phi_n$ in~\eqref{eqdef:CG-map-Phi_n}. Comparing the expressions for the total Hamiltonian $H_{\mathrm{total}}$ in~\eqref{eqdef:Htotal} and the conserved \Generic energy~$\calE$ in~\eqref{eqdef:GENERIC-E-S-repeated} suggests to consider $e$ a placeholder for $\frac12 \|z - C_n Z\|_{\calH_n}^2$. We follow this suggestion and define the coarse-graining map
\begin{equation}
	\label{eqdef:Phin-damped-oscillator}
\Phi_n: \R^2\ti \calH_n \longrightarrow \R^2\ti\R, 
\qquad 
(Z,z) \longmapsto \bra*{\,Z\,,\, e = \frac12 \|z - C_n Z\|_{\calH_n}^2 - \frac n\beta}.
\end{equation}
Note that we renormalize $e$ by subtracting a constant. This follows from the energy scaling that is implicit in the definition~\eqref{eqdef:mu-beta-n-E0}:  we expect the total energy of the heat bath to be roughly $n/\beta$, and by subtracting this in the definition of $e$ we expect $e$ to remain bounded as $n\to\infty$.

Note that with this choice we have 
\begin{equation}
\label{eq:H-total-to-E}
H_{\mathrm{total}}(Z,z) \stackrel{\eqref{eqdef:Htotal}}=
H_A(Z) +  \frac12 \|z - C_n Z\|_{\calH_n}^2 
= \underbrace{H_A(Z) + e}_{=:\ \calE(Z,e)} \;+ \;\frac n\beta.
\end{equation}
This implies that level sets $\{(Z,z): H_{\mathrm{total}}(Z,z)=c\}$ are mapped by $\Phi_n$ to level sets $\{(Z,e): \calE(Z,e)=c'\}$, and in particular that if we first sample $(Z,z)$ from the microcanonical measure $\mu_{\beta,n,\calE_0}$ and then apply the coarse-graining map $\Phi_n$, we have $\calE(\Phi_n(Z,z)) = \calE_0$.

\bigskip

As in the first option of the Basic Answer of Section~\ref{ss:basic-answer} we claim that entropy can be recognized in the density of the coarse-grained version of the invariant measure $\mu_{\beta,n,\calE_0}$. The following lemma describes this coarse-grained measure and makes the  entropy appear in this way. 

\begin{lemma}[Coarse-grained invariant measure]
	\label{l:CG-inv-meas}
	Fix $\beta>0$ and $\calE_0$. 
The measure $\mu_{\beta,n,\calE_0}$ transforms under the map $\Phi_n$ into
\begin{equation}
	\label{eq:CG-entropy-n-beta}
(\Phi_n)_\# \mu_{\beta,n,\calE_0}(\rmd Z \rmd e)
= \ee^{\calS_{n,\beta}(e)} \pdelta\pra[\big]{H_A(Z) + e - \calE_0}(\rmd Z) \dd e.
\end{equation}
Here
\begin{equation}
	\label{eqdef:Snbeta}
\calS_{n,\beta}(e) := \log \int_{\calH_n} \pdelta\pra[\Big]{\frac12 \|z\|_{\calH_n}^2 - \frac n\beta - e}(\rmd z).
\end{equation}
In addition, there exists $C(n,\beta)$ such that as $n\to\infty$ we have for each $e\in \R$
\begin{equation}
	\label{eq:l:CG-inv-meas-conv-Snbeta}
\calS_{n,\beta}(e) - C(n,\beta) \longrightarrow \beta e.
\end{equation}
\end{lemma}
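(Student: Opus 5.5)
The proof has two parts: identifying the push-forward, and then computing the asymptotics of $\calS_{n,\beta}$. For the first part I will follow the calculation in the proof of Lemma~\ref{l:char-cond-stat-measure}. Take a test function $F(Z,e)$. Then
\[
\int F\,\rmd(\Phi_n)_\#\mu_{\beta,n,\calE_0} = \int F\bra[\big]{Z,\tfrac12\|z-C_nZ\|_{\calH_n}^2-\tfrac n\beta}\,\mu_{\beta,n,\calE_0}(\rmd Z\rmd z).
\]
I then make the translation $\overline z = z - C_nZ$, which preserves Lebesgue measure for each fixed $Z$. Next I apply the product rule for $\pdelta$'s~\eqref{eq:product-rule-for-deltas}. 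This introduces an auxiliary integral $\int\rmd e$ and splits the microcanonical measure into $\pdelta[H_A(Z)+e-\calE_0](\rmd Z)$ times $\pdelta[\frac12\|\overline z\|^2-\frac n\beta-e](\rmd\overline z)$. On the support of the second factor, the $e$-argument of $F$ equals the new variable $e$. So $F(Z,e)$ comes out of the $\overline z$-integral, and what remains of that integral is exactly $\ee^{\calS_{n,\beta}(e)}$ by~\eqref{eqdef:Snbeta}. This gives~\eqref{eq:CG-entropy-n-beta}. I will need to be careful that the sign conventions in the two $\pdelta$'s match those used in the earlier lemma.

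For the second part I compute $\calS_{n,\beta}$ explicitly. The measure $\pdelta[\frac12\|z\|_{\calH_n}^2 - r](\rmd z)$ on $\R^{2n}$ is, up to the constant Jacobian $(\Delta\omega)^{-n}$ coming from the scaled norm, the derivative in $r$ of the Lebesgue volume of the ball $\{\frac12\Delta\omega|z|^2<r\}$. That volume is $c_n(2r/\Delta\omega)^n$, so the total mass is $\kappa_n r^{n-1}$, with $\kappa_n$ depending only on $n$ and $\Delta\omega$. This is the same computation as in the proof of Lemma~\ref{l:CG-example-S_n}. Substituting $r = n/\beta + e$ gives
\[
\calS_{n,\beta}(e) = \log\kappa_n + (n-1)\log\bra[\big]{\tfrac n\beta + e} \qquad (\text{for } e > -n/\beta).
\]

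I then set $C(n,\beta) := \log\kappa_n + (n-1)\log(n/\beta)$. With this choice,
\[
\calS_{n,\beta}(e) - C(n,\beta) = (n-1)\log\bra[\big]{1+\tfrac{\beta e}{n}},
\]
which converges to $\beta e$ as $n\to\infty$ for each fixed $e$ (note $e > -n/\beta$ eventually). This is the same mechanism as in Remark~\ref{rem:deterministic-equations-from-combined-limits}, except that here $\beta$ is fixed and the energy per degree of freedom is prescribed.

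The main obstacle is the rigorous justification of the first part, namely the manipulation of $\pdelta$-measures. This relies on the product rule and on~\eqref{eqdef:pdelta-LRS} from Appendix~\ref{app:pdelta}, which requires nondegenerate gradients; this fails only on the null set $\overline z=0$, $Z=0$. The asymptotic second part is elementary.
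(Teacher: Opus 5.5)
Your proposal is correct and follows essentially the paper's proof: the product rule for $\pdelta$'s (Lemma~\ref{l:delta-times-delta}) together with a shift of the origin by $C_nZ$ yields~\eqref{eq:CG-entropy-n-beta}, and an explicit computation of the mass of the sphere measure gives $\calS_{n,\beta}(e)=C(n,\beta)+(n-1)\log(1+\beta e/n)$. The differences are cosmetic. The paper applies the lemma through the auxiliary map $T_n$ and shifts the origin only at the end; you shift first, which has the small merit that the splitting $H_A(Z)+g(\overline z)$ required by Lemma~\ref{l:delta-times-delta} then holds literally. The paper also evaluates the sphere mass with the co-area formula~\eqref{eq:pdelta-coarea}, whereas you take the $r$-derivative of a ball volume.
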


The expression~\eqref{eq:CG-entropy-n-beta} gives a characterization of the coarse-grained version of $\mu_{\beta,n,\calE_0}$ in two parts:
\begin{itemize}
\item The reference measure  $\pdelta\pra[\big]{H_A(Z) + e - \calE_0}(\rmd Z \rmd e)$ is another microcanonical measure, but now for the \Generic evolution~\eqref{eq:damped-oscillator-eq-repeated}. It is concentrated on the level set of $\calE = \calE(Z,e)$ with value $\calE_0$. By an argument similar to that of Lemma~\ref{l:mu-beta-n-E0-is-invariant} one can verify that this measure is preserved by the \emph{reversible} part (the term $JD\calE$) of the \Generic evolution~\eqref{eq:damped-oscillator-eq-repeated}.  
\item The density $\exp\bra[\big]{\calS_{n,\beta}(e)}$ with respect to this measure reflects the variation in degeneracy of the coarse-graining map $\Phi_n$, similar to the discussion in Section~\ref{s:core-example}.
\end{itemize}
We comment in more detail on the structure of \Generic SDEs in Appendix~\ref{s:generic}; see also~\cite{PeletierSeri25TR}.

\medskip

Lemma~\ref{l:CG-inv-meas} suggests the following
\begin{definition}[Entropy for the damped oscillator]
We define the entropy $\calS$ as the limit expression in~\eqref{eq:l:CG-inv-meas-conv-Snbeta},
\begin{equation}
	\label{eqdef:S-damped-oscillator}
\calS(e) := \beta e.
\end{equation}
\end{definition}
The convergence result~\eqref{eq:l:CG-inv-meas-conv-Snbeta} finally shows how the simple formula `$\calS = \beta e$' appears in the limit $n\to\infty$.
Note that the $e$-independent constant in~\eqref{eq:l:CG-inv-meas-conv-Snbeta} is not important, since in the context of \Generic one can add constants to an entropy without changing the structure. 

Also note that the entropy in this system is a property of the heat bath, not of the oscillator itself; it also depends on the parameter $\beta$ that characterizes the energy per degree of freedom in the heat bath.

\begin{proof}[Proof of Lemma~\ref{l:CG-inv-meas}]
	We follow the ideas of~\cite[Sec.~4.6]{MielkePeletierZimmer25}.
We prove the identity~\eqref{eq:CG-entropy-n-beta} by remarking that if we define the auxiliary map 
\[
T_n: \R^2\ti \calH_n \longrightarrow \R^2\ti\calH_n\ti\R, 
\qquad 
(Z,z) \longmapsto \bra*{\,Z\,,\, z\, , \, e = \frac12 \|z - C_n Z\|_{\calH_n}^2 - \frac n\beta},
\]
then for any measure $\nu$ on $\R^2\ti\calH_n$ we have
\begin{equation}
	\label{eq:relation-Phi-Psi}
(\Phi_n)_\# \nu (\rmd Z\rmd e) = \int_{z\in\calH_n} (T_n)_\# \nu (\rmd Z\rmd z\rmd e).
\end{equation}
Lemma~\ref{l:delta-times-delta} in the Appendix implies that 
\begin{align*}
(T_n)_\# \mu_{\beta,n,\calE_0} (\rmd Z\rmd z\rmd e)
&= (T_n)_\# \pdelta\pra[\Big]{H_A(Z) +  \frac12 \|z - C_n Z\|_{\calH_n}^2 - \frac n\beta -\calE_0}(\rmd Z\rmd z\rmd e) \\
&= 
\pdelta\pra[\Big]{\frac12 \|z - C_n Z\|_{\calH_n}^2 - \frac n\beta -e}(\rmd z)
\,
\pdelta\pra[\big]{H_A(Z) + e -\calE_0}(\rmd Z)
\dd e.
\end{align*}
Applying~\eqref{eq:relation-Phi-Psi} we find
\begin{align*}
\MoveEqLeft(\Phi_n)_\# \mu_{\beta,n,\calE_0} (\rmd Z\rmd e)\\[2\jot]
&= \bra[\bigg]{\,
	\int_{z\in\calH_n}
\pdelta\pra[\Big]{\frac12 \|z - C_n Z\|_{\calH_n}^2 - \frac n\beta -e}(\rmd z)}
\,
\pdelta\pra[\big]{H_A(Z) + e -\calE_0}(\rmd Z)
\,
\dd e\\[3\jot]
&\leftstackrel{\eqref{eqdef:Snbeta}}= \ee^{\calS_{n,\beta}(e) }\pdelta\pra[\big]{H_A(Z) + e -\calE_0}(\rmd Z)\,\dd e.
\end{align*}
In the final identity we used the fact that in the definition of $\calS_{n,\beta}$ we can shift the origin to $C_n Z$ without changing the value of the integral. 

To show the convergence~\eqref{eq:l:CG-inv-meas-conv-Snbeta}, remark that $\pdelta\pra{\frac12 \|z\|_{\calH_n}^2 - \frac n\beta - e}$ is supported on a sphere in $2n$ dimensions of radius $r=\sqrt{2n/\beta+2e}$. Using the co-area formula~\eqref{eq:pdelta-coarea}  we find
\begin{align*}
\int_{\calH_n} \pdelta\pra[\Big]{\frac12\|z\|_{\calH_n}^2 - \frac n{\beta} - e}(\rmd z)
\ =\ \  r^{-1}  \!\!\!\!\!\underbrace{2n\omega_{2n} r^{2n-1}}_{\text{area of sphere of radius $r$}}
= \ 2n\omega_{2n} \bra*{\frac{2n}{\beta}}^{n-1} \bra*{1 + \frac{\beta e}n}^{n-1}
\end{align*}
so that 
\begin{align*}
\calS_{n,\beta}(e)&=
\underbrace{\log 2n\omega_{2n} \bra*{\frac{2n}{\beta}}^{n-1}}_{=:\, C(\beta,n)} \  + \ (n-1)\log \bra*{1 + \frac{\beta e}n},
\end{align*}
and the convergence~\eqref{eq:l:CG-inv-meas-conv-Snbeta} follows.
\end{proof}

\subsection{Dynamics and coarse-graining}
\label{ss:HamSys-dynamics}

In the previous section we identified, in the limit $n\to\infty$, the expression $\calS = \beta e$ as an `entropy' in the sense of the Basic Answer.
We now return to the dynamics of the microscopic system, and how it is transformed by the coarse-graining map $\Phi_n$.

In Section~\ref{ss:setup-micro-HamSys} we derived the ordinary differential equations~\eqref{eq:Zz-coupled-system} for the full Hamiltonian system. After applying the coarse-graining map $\Phi_n$ in~\eqref{eqdef:Phin-damped-oscillator} we find the equations for the coarse-grained variables $(Z,e)$ with $e = \frac12 \|z - C_n Z\|_{\calH_n}^2 - \frac n\beta$:
\begin{subequations}
\label{eq:dot-Z-e-CG}
\begin{align}
\dot Z &= J_A \bra[\big]{\rmD H_A(Z) - C_n^* (z-C_nZ)}\label{eq:dot-Z-CG}\\
	\label{eq:dot-e-CG}
\dot e &= -\bra[\Big]{J_A \nabla H_A (Z), \, C_n^*(z-C_n Z)}_{\R^2}.
\end{align}
\end{subequations}
Note that this set of equations is not closed, since the right-hand side depends on~$z$, which is not fully determined by $Z$ and $e$.

Inspecting the two equations~\eqref{eq:dot-Z-CG} and~\eqref{eq:dot-e-CG} for the two coarse-grained quantities~$Z$ and~$e$ we observe that both only depend on $z$ through the combination $C_n^*(z-C_n Z)$. Using Duhamel's principle we derive the following expression for this object, in terms of the initial data $(Z_0,z_0)$ and the time-dependent function $t\mapsto Z_t$:
\begin{align}
C_n^*(z_t-C_n Z_t) 
&= Y_n(t) - \int_0^t \kappa_n(t-s) \dot Z_s \dd s,
\label{eq:Cz-CCz-RHS}
\end{align}
where we defined
\begin{equation}
	\label{eqdef:Yn-kappan}
Y_n(t) := C_n^* \ee^{tJ_B}z_0- \kappa_n(t) Z_0
\qquad \text{and}\qquad
\kappa_n(t) := C_n^*\ee^{tJ_B}C_n.
\end{equation}

\bigskip
We now consider the first of the two types of dynamics, given by Definition~\ref{def:microscopic-process-stationary}, in which we fix $\calE_0\in \R$ and sample the initial data $(Z_0,z_0)$ from the measure $\mu_{\beta,n,\calE_0}$ given in~\eqref{eqdef:mu-beta-n-E0}. Then $Y_n$ is a stochastic process in $\R^2$, and $\kappa_n$ is a deterministic time-dependent $2\ti2$ matrix. 

To study this process, we note that upon setting $\zeta_0 :=z_0 - C_nZ_0$ we find that  
\begin{equation}
	\label{eq:char-Y_n-zeta_0}
Y_n(t) = C_n^* \ee^{tJ_B}\zeta_0
\end{equation}
and  Lemma~\ref{l:char-cond-stat-measure} implies that conditioned on $Z_0$, $\zeta_0$ has law $\nu_{\beta,n,\calE_0-H_A(Z_0)}$.

We now consider the limit $n\to\infty$.
\begin{lemma}
	\label{l:conv-HamSys}
As $n\to\infty$, assume that $\Delta \omega\to0$ and $n\Delta \omega \to \infty$. Then
\begin{itemize}
	\item the matrix-valued function $\kappa_n$ converges on $[0,\infty)$ to a matrix-valued delta function $\tilde \gamma \delta_0(\rmd t)$, where $\tilde \gamma = \begin{pmatrix} \gamma & 0\\ 0 & 0 \end{pmatrix}$;
	\item $ Y_n$ converges to white noise with intensity $\sqrt{\dfrac{2 \gamma}{\beta}} \begin{pmatrix}1\\0\end{pmatrix}$.
\end{itemize}
\end{lemma}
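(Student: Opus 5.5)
The plan is to compute everything explicitly, because $e^{tJ_B}$ acts diagonally as a rotation in each pair $(q_j,p_j)$. By \eqref{eqdef:JB-heat-bath}, $e^{tJ_B}(q,p)$ has components $q_j\cos\omega_j t + p_j\sin\omega_j t$ and $-q_j\sin\omega_j t+p_j\cos\omega_j t$. Write $c:=\sqrt{2\gamma/\pi}$ and apply \eqref{eqdef:C_n-choice}. Then only the $(1,1)$ entry of $\kappa_n$ survives, and
\[
\kappa_n(t)=\begin{pmatrix} k_n(t) & 0\\ 0&0\end{pmatrix},\qquad k_n(t):=\frac{2\gamma}{\pi}\,\Delta\omega\sum_{j=1}^n \cos(j\Delta\omega\, t).
\]
Similarly, with $\zeta_0=(\xi,\eta)$ as in \eqref{eq:char-Y_n-zeta_0}, the first component of $Y_n$ is
\[
c\,\Delta\omega\sum_{j=1}^n \bigl(\xi_j\cos\omega_j t+\eta_j\sin\omega_j t\bigr),
\]
and the second component is zero. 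Both statements therefore reduce to scalar Fourier-type sums.

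For the first bullet, I would interpret the convergence in the sense of measures on $[0,\infty)$ and test against $\varphi\in C_c^\infty([0,\infty))$. Let $\varphi_e$ be the even extension of $\varphi$ and $\hat\varphi_e(\omega)=\int_\R\varphi_e(t)\cos(\omega t)\,\rmd t$. Then
\[
\int_0^\infty k_n\varphi\,\rmd t=\frac{\gamma}{\pi}\,\Delta\omega\sum_{j=1}^n\hat\varphi_e(\omega_j).
\]
This is a Riemann sum with mesh $\Delta\omega\to0$ over $(0,n\Delta\omega]$, and $n\Delta\omega\to\infty$. The Fourier transform $\hat\varphi_e$ decays rapidly, so the tail and discretization errors vanish. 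Hence the sum converges to
\[
\frac{\gamma}{\pi}\int_0^\infty\hat\varphi_e\,\rmd\omega=\frac{\gamma}{2\pi}\int_\R\hat\varphi_e\,\rmd\omega=\gamma\varphi_e(0)=\gamma\varphi(0)
\]
by Fourier inversion. Strictly, $\varphi_e$ is only Lipschitz at $0$, so $\hat\varphi_e$ decays only like $\omega^{-2}$. That is still integrable and enough for the Riemann-sum argument. The factor $\tfrac12$ from restricting to the half-line is exactly what makes the normalization $\sqrt{2\gamma/\pi}$ produce $\gamma$.

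For the second bullet, I would show convergence of $\langle Y_n,\varphi\rangle=\int Y_n(t)\varphi(t)\,\rmd t$ jointly for finitely many test functions $\varphi\in C_c^\infty(\R)$. The target is a centred Gaussian limit with covariance $\frac{2\gamma}{\beta}\int\varphi\psi$. I proceed in two steps.

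First I compute covariances. Conditionally on $Z_0$, $\zeta_0$ is uniform (microcanonical) on the sphere $\{\tfrac12\|\zeta\|^2_{\calH_n}=n/\beta+e\}$ of Lemma~\ref{l:char-cond-stat-measure}. By symmetry, its $2n$ coordinates are uncorrelated with variance $(2n/\beta+2e)/(2n\Delta\omega)=(1+O(1/n))/(\beta\Delta\omega)$. The cross terms $\cos\omega_j t\,\sin\omega_j t$ have zero covariance, so
\[
\mathbb E\,Y_n^{(1)}(t)Y_n^{(1)}(s)=\frac{1+O(1/n)}{\beta}\,k_n(t-s).
\]
This is the fluctuation--dissipation relation. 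Running the Fourier computation of the first step, now on the whole line where $k_n$ is even and tends to $2\gamma\delta_0$, gives the limiting covariance $\frac{2\gamma}{\beta}\int\varphi\psi$.

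Second, I establish Gaussianity. I write $\zeta_0=R\,g/\|g\|_{\calH_n}$ with $g$ having i.i.d.\ centred Gaussian coordinates of variance $1/(\beta\Delta\omega)$ and $R=\sqrt{2n/\beta+2e}$. By the law of large numbers $R/\|g\|_{\calH_n}\to1$ almost surely, uniformly for $e=\calE_0-H_A(Z_0)$ in compacts. For $g$ in place of $\zeta_0$, $\langle Y_n,\varphi\rangle$ is exactly Gaussian with the covariance above, and Slutsky transfers the limit to $\zeta_0$.

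The main obstacle is making this uniform enough to call the limit "white noise". I would settle for convergence in finite-dimensional distributions of the pairings, i.e.\ in law in $\mathcal D'(\R)$ by a L\'evy-continuity argument. I would also check that conditioning on $Z_0$, which only shifts $e$ by $O(1)$, does not affect the limit.
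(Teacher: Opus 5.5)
Your proposal is correct. For the first bullet it follows the paper's argument: explicit rotations, the Fourier transform of the test function, a Riemann sum using $\Delta\omega\to0$ and $n\Delta\omega\to\infty$, then Fourier inversion.

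Your treatment of the half-line is cleaner than the paper's. For the statement on $[0,\infty)$ the paper only tests against even Schwartz functions, which force $\varphi'(0)=0$. Your even extension, with $\hat\varphi_e=O(\omega^{-2})$, covers all of $C_c^\infty([0,\infty))$. Restricting to smooth test functions is genuinely necessary here: $k_n$ is a rescaled Dirichlet kernel whose $L^1(0,T)$ norm grows like $\log(n\Delta\omega)$. By Banach--Steinhaus, the appendix's phrase `in duality with $C_c$' therefore cannot hold literally.

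For the second bullet your route is genuinely different. The paper proves a functional statement: $B_n=\sqrt{\beta/(2\gamma)}\int_0^{\cdot}Y_n$ converges in law in $C([0,T];\R^2)$ to $(W,0)$. It gets the finite-dimensional marginals from sphere projections via Diaconis--Freedman, and tightness from sub-Gaussian moment bounds (Vershynin) plus a Kolmogorov-type criterion.

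You instead prove convergence in law of the pairings $\langle Y_n,\varphi\rangle$, in two steps.
\begin{itemize}
\item The exact identity $\mathbb E\,Y_n^{(1)}(t)Y_n^{(1)}(s)=\beta^{-1}(1+\beta e/n)\,k_n(t-s)$. This makes the fluctuation--dissipation link between the two bullets explicit, and reduces the limiting covariance to the first-bullet computation on $\R$.
\item The representation $\zeta_0=Rg/\|g\|_{\calH_n}$ combined with Slutsky. This is an elementary, self-contained substitute for the equivalence-of-ensembles citation.
\end{itemize}

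What your route buys is transparency. What it does not deliver is the mode of convergence the paper actually uses. Passing from~\eqref{eq:Cz-CCz-RHS} to the Stratonovich SDE~\eqref{eq:CG-QPe-Strat-SDE} needs $\int_0^t Y_n$ to converge as a process, and pairings with fixed smooth test functions do not give this.

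Your own tools close that gap cheaply:
\begin{itemize}
\item The covariance computation applies verbatim to $\varphi=\bONE_{[0,t]}$, because the resulting $h$ is Lipschitz with $\hat h=O(\omega^{-2})$. This gives the marginals of $B_n$.
\item With $g$ in place of $\zeta_0$, the increments of $W_n$ are exactly Gaussian with variance $\frac1\pi\Delta\omega\sum_j 2(1-\cos\omega_j h)/\omega_j^2\le Ch$, uniformly in $n$. To see this, bound the summand by $\min(h^2,4/\omega_j^2)$.
\item Since $\|g\|$ and $g/\|g\|$ are independent, these moment bounds transfer to $\zeta_0$ up to the factor $R^p/\mathbb E\|g\|_{\calH_n}^p\to1$.
\end{itemize}
Kolmogorov's criterion then gives tightness in $C([0,T])$, with no sub-Gaussian machinery.
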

\noindent
This is a fairly standard result; similar constructions are in e.g.~\cite[Sec.~3]{KupfermanStuartTerryTupper02} or \cite[Sec.~5.1]{PavliotisStuart08}. We prove it in Appendix~\ref{app:proof-of-convergence}.

\medskip

With this convergence, the right-hand side in~\eqref{eq:Cz-CCz-RHS} converges to 
\begin{equation}
	\label{eq:limit-of-CzZ}
  \sqrt{\frac{2 \gamma}\beta}\begin{pmatrix} 1\\0\end{pmatrix} \dd W_t - \begin{pmatrix} \gamma & 0 \\ 0 & 0\end{pmatrix} \dd Z_t,
 \qquad\text{for }t>0,
\end{equation}
where $W$ is a one-dimensional Brownian motion.  
Formally combining~\eqref{eq:dot-Z-e-CG} and~\eqref{eq:Cz-CCz-RHS} with the limit $n\to\infty$ and the expression~\eqref{eq:limit-of-CzZ} we then find the Stratonovich limit SDE 
\begin{subequations}
	\label{eq:CG-QPe-Strat-SDE}
\begin{align}
\rmd Q_t &= \frac1m P_t \dd t \\
\rmd P_t &= \bra*{-k Q_t - \frac\gamma m P_t } \dd t + \sqrt{\frac{2 \gamma}\beta}\circ \rmd W_t\\
\rmd e_t &= \gamma \frac {P_t^2}{m^2}  \dd t - \frac {P_t}{m} \sqrt{\frac{2 \gamma}\beta}\circ \rmd W_t,
\label{eq:CG-QPe-Strat-SDE-e}
\end{align}
\end{subequations}
which has the equivalent It\^o formulation (see e.g.~\cite[Ch.~6]{Evans06}; note the additional term in the equation for $e$)
\begin{subequations}
	\label{eq:CG-QPe-SDE}
\begin{align}
\rmd Q_t &= \frac1m P_t \dd t \\
\rmd P_t &= \bra*{-k Q_t - \frac\gamma m P_t } \dd t + \sqrt{\frac{2 \gamma}\beta}\dd W_t\\
\rmd e_t &= \frac\gamma m\bra[\Big]{ \frac {P_t^2}{m}  - \frac1{\beta }} \dd t - \frac {P_t}{m} \sqrt{\frac{2 \gamma}\beta}\dd W_t.
\label{eq:CG-QPe-SDE-e}
\end{align}
\end{subequations}

These equations can also be written in the form of a `\Generic SDE' (see~\eqref{eq:GSDE-def}): 
\begin{multline}
	\label{eq:damped-oscillator-generic-sde}
\rmd \begin{pmatrix}
	Q \\ P \\e
\end{pmatrix}
= 
\underbrace{\begin{pmatrix}
	0 & 1 & 0 \\ 
	-1& 0 & 0 \\
	0 & 0 & 0
\end{pmatrix}}_{J}
\underbrace{\begin{pmatrix}
	kQ \\ P/m \\ 1
\end{pmatrix}}_{\rmD\calE}
\dd t
+
\underbrace{\frac\gamma\beta \begin{pmatrix}
	0 &    0 & 0\\
	0 &    1 & -P/m\\
	0 & -P/m & P^2/m^2\\
\end{pmatrix}}_{K}
\underbrace{\begin{pmatrix}
	0 \\ 0 \\ \beta
\end{pmatrix}}_{\rmD \calS} \dd t
\\[3\jot]
+ \underbrace{\frac\gamma\beta \begin{pmatrix} 0 \\ 0 \\ -1/m\end{pmatrix}}_{\div K}\dd t 
+ \sqrt2
  \underbrace{\sqrt\frac\gamma \beta
  \begin{pmatrix} 0 \\ 1 \\ -P/m \end{pmatrix}}_{\Sigma}
  \dd W_t.
\end{multline}
where $J$, $\calE$, $K$, and $\calS$ are as in~\eqref{eq:damped-oscillator-eq-repeated} and~\eqref{eqdef:GENERIC-E-S-repeated} (or~\eqref{eq:damped-oscillator-generic-sde}), and the noise mobility $\Sigma$ satisfies the fluctuation-dissipation relation $\Sigma\Sigma^\top = K$. 

Note how the first two drift terms of the SDEs~\eqref{eq:CG-QPe-SDE} and~\eqref{eq:damped-oscillator-generic-sde} correspond to the equations~\eqref{eq:damped-oscillator-eq-repeated}, while the $\beta$-dependent terms are new. We discuss the \Generic-SDE structure in detail in Appendix~\ref{s:generic}.

\begin{remark}[Origin of $\Sigma$ and $K$]
	\label{rem:Origin-of-Sigma-and-K}
The structure of $\Sigma $ and $K$ in~\eqref{eq:damped-oscillator-generic-sde} can be understood from the convergence of~\eqref{eq:dot-Z-e-CG} to~\eqref{eq:CG-QPe-Strat-SDE}, if one takes into account that the noisy part $Y_n$ of $C_n^* (z-C_nZ)$ converges to white noise with intensity $\sqrt{2\gamma/\beta} \,(1,0)^\top$. Writing~\eqref{eq:dot-Z-e-CG} as 
\begin{align*}
\dd Z &=  J_A Y_n \dd t + \textrm{other terms}\\
\dd e &= (J_A \nabla H_A(Z), Y_n)_{\R^2}\dd t + \textrm{other terms}
\end{align*}
we observe that the limit of this pair of equations can be written as 
\begin{align*}
\dd \begin{pmatrix}
	Q \\ P \\ e
\end{pmatrix}
&= \begin{pmatrix}
0 & 1 \\ -1 & 0 \\ P/m & -V'(Q) \end{pmatrix}
\sqrt{\frac {2\gamma}{\beta}} 
\begin{pmatrix} 1 \\ 0 \end{pmatrix} \cdot \textrm{white noise}
\quad + \textrm{other terms}\\
&= \underbrace{\sqrt {\frac {2\gamma}{\beta}}
\begin{pmatrix}
0 \\ -1  \\ P/m  \end{pmatrix}}_{\sqrt 2 \, \Sigma}
\cdot \textrm{ white noise}
\quad + \textrm{other terms}.
\end{align*}
The intensity of the white noise above is $\sqrt 2 \,\Sigma$, as in~\eqref{eq:damped-oscillator-generic-sde}. From the fluctuation-dissipation relation $\Sigma\Sigma^\top = K$ then follows the form of $K$.
\end{remark}

\begin{remark}[Origin of $\calE$ and $J$]
	\label{rem:Origin-of-E-J}
By following the convergence of~\eqref{eq:dot-Z-e-CG} to~\eqref{eq:CG-QPe-SDE} we can also recognize how the \Generic components $J$ and $\calE$ arose from the microscopic evolution in $Z$ and $z$. By the observation~\eqref{eq:H-total-to-E}, $\calE$ has the interpretation of the total energy of the microscopic system, up to a renormalization by the subtraction of $n/\beta$ to keep the value finite in the limit $n\to\infty$. This also explains why it should be conserved in the coarse-grained evolution. 

The Poisson operator $J$ in~\eqref{eq:damped-oscillator-generic-sde} is a remapping of the macroscopic operator $J_A$ to the variables $(Z,e)$, as can also be recognized in the expression~\eqref{eq:dot-Z-CG}. This also is consistent with e.g.\ \"Ottinger's prescription for the coarse-grained Poisson operator~\cite[Eq.~(6.68)]{Oettinger05}.
\end{remark}

\begin{remark}[Deterministic initial datum $Z_0$]
In the setup above we assumed that the full initial data $(Z_0,z_0)$ is a random sample from $\mu_{\beta,n,\calE_0}$, as described in Definition~\ref{def:microscopic-process-stationary}, and we obtained convergence as $n\to\infty$ to the \Generic SDE~\eqref{eq:damped-oscillator-generic-sde}. If instead we fix $Z_0\in \R^2$ and choose $z_0$ from the conditional stationary measure, as described by Definition~\ref{def:microscopic-process-given-Z0}, then the same convergence takes place; the only difference is that the initial data for $Q$ and $P$ are non-random and given by the value of $Z_0 = (Q_0,P_0)$.
\end{remark}

\subsection{Conclusions about this example}

From the various discussions in Section~\ref{s:damped-pendulum} we draw a few conclusions.

\paragraph{The \Generic system~\eqref{eq:damped-oscillator-eq-repeated} can be seen as a coarse-grained version of a Hamiltonian system.} By replacing the variable $e$ in $\calE$ by a high-dimensional heat bath, with an appropriate coarse-graining operator, we can see the \Generic system~\eqref{eq:damped-oscillator-eq-repeated} as the limit of a coarse-grained Hamiltonian system with random initial data. 

\paragraph{The natural coarse-grained evolution is random, not deterministic.} In first instance this coarse-graining leads to a \emph{random} evolution, as illustrated in~\eqref{eq:CG-QPe-SDE}. This observation prompts us to think about versions of \Generic that are random evolutions rather than deterministic ones; in Appendix~\ref{s:generic} we describe such \Generic SDEs in detail.

\paragraph{Entropy arises as density of the coarse-grained invariant measure.} The expression~\eqref{eq:CG-entropy-n-beta} clearly shows this in combination with the convergence~\eqref{eq:l:CG-inv-meas-conv-Snbeta}.

\paragraph{$\calS$ and $K$ canonically depend on $\beta$.}
In Example~\ref{ex:damped-harmonic-oscillator} in Section~\ref{se:central_question} the definitions of both $\calS$ and $K$ contain $\beta$, despite $\beta$ not appearing in the equation~\eqref{eq:ODE-harmonic-oscillator}. Indeed, in the structure~\eqref{eq:Generic-ODE-damped-harmonic-oscillator} one could redefine $\calS$ and $K$ without the factors $\beta$,  and the resulting structure would both represent~\eqref{eq:ODE-harmonic-oscillator} and be of \Generic form.

We can understand the role of $\beta$ from the combination of Sections~\ref{s:core-example} and~\ref{s:damped-pendulum}. The core example in Section~\ref{s:core-example} shows how it is natural that $K$ has a dependence $1/\beta$ on the size of the noise. The limit~\eqref{eq:l:CG-inv-meas-conv-Snbeta} shows how $\calS_{n,\beta}$ and its limit characterize the dependence on $e$ of the degeneracy of macrostates in the heat bath, namely as $\beta e$. 

This last remark is an important one. In a heat bath of temperature $1/\beta$, the size of macrostates really does depend on~$\beta$; if $\beta$ is larger, then an infinitesimal increment in $e$ leads to a larger change in degeneracy than in if $\beta$ is smaller. This is related to the fact that the constant $\beta$ characterizes the amount of energy per degree of freedom in the heat bath (see~\eqref{eqdef:mu-beta-n-E0}), and therefore is a property of the heat bath; at the same time  $\calS$ also is a property of the heat bath.

Incidentally, in other treatments of an oscillator as a \Generic system have the same dependence of $K$ and $\calS$ on temperature (e.g.~\cite[Sec.~5]{Ottinger18}; see~\cite{JungelStefanelliTrussardi21} for an example with a finite heat bath).

\medskip

\paragraph{Why is \emph{friction} driven by \emph{entropy}?}
In Section~\ref{s:damped-pendulum} we saw how the forces between the system $Z$ and the heat-bath variables $z$ are mediated by the expression $C_n^*(z-C_nZ)$, which converges to the biased white noise of~\eqref{eq:limit-of-CzZ}. The bias in this noise is generated by the degeneracy of constant-energy level sets in the heat bath, i.e.\ by the entropy.

To see this, let us formally write~\eqref{eq:dot-Z-CG} with the replacement~\eqref{eq:limit-of-CzZ} as 
\[
\dot Z = J_A \bra[\Big]{\rmD H_A (Z) 
  + {\textstyle \begin{psmallmatrix} \gamma & 0 \\ 0 & 0 \end{psmallmatrix}}\dot Z + \text{noise }},
\]
or after comnbining the two terms with $\dot Z$, 
\[
\bra[\Big]{I - J_A \begin{psmallmatrix} \gamma & 0 \\ 0 & 0 \end{psmallmatrix}}\dot Z = J_A \bra[\Big]{\rmD H_A (Z) 
   + \text{noise }}.
\]
Using $(I- J_A \begin{psmallmatrix} \gamma & 0 \\ 0 & 0 \end{psmallmatrix})^{-1}J_A = J_A - \begin{psmallmatrix} 0 & 0 \\ 0 & \gamma \end{psmallmatrix}$, we rewrite this as 
\[
\dot Z = \bra[\Big]{J_A - \begin{psmallmatrix} 0 & 0 \\ 0 & \gamma \end{psmallmatrix}} \bra[\big]{\rmD H_A(Z) + \text{noise }}.
\]
This implies that the energy $H_A(Z)$ of the system satisfies
\[
\frac{\rmd}{\rmd t} H_A(Z) =  \underbrace{-\rmD H_A(Z)^\top \begin{psmallmatrix} 0 & 0 \\ 0 & \gamma \end{psmallmatrix} \rmD H_A(Z)}_{\leq 0} {}+ \text{noise }.
\]
This shows that $H_A$ has a drift towards lower values. Since at finite $n$ the total energy is fixed, this implies a drift towards higher energy $e$ of the heat bath, and since degeneracy in the heat bath is proportional to $e$ (see~\eqref{eqdef:S-damped-oscillator}) this leads to higher degeneracy.

The answer to the question `why is friction driven by entropy' is therefore that (by the argument above) the random motion of the heat bath favours directions in which $H_A$ decreases, and this translates into `friction'.

\section{Conclusion}

We have seen that entropy drives evolutions because it characterizes the invariant measure after coarse-graining of an underlying process. 

\medskip

We have also seen the two forms that this driving can take. In the first form, the coarse-grained equations are stochastic and can be written e.g.\ as an SDE for a finite-dimensional stochastic process. In this SDE the derivative of the entropy appears as a drift term. 
We have seen two examples of this:
\begin{itemize}
\item In the core example of Section~\ref{s:core-example}, 
the evolution of the microscopic process $X_t$ is random and given by an SDE.\@ The coarse-graining map $\Phi_n$ maps the invariant measure~$\mu_n$ of the process $X_t$ to the measure $\ee^{\calS_n(y)}\rmd y$ of the process $Y_t$. The derivative $\calS_n'(y)$ appears in the drift in the resulting SDE~\eqref{eq:CG-example-SDE-Y} for $Y_t$.
\item In the damped-oscillator Example~\ref{ex:damped-harmonic-oscillator} in Section~\ref{s:damped-pendulum} the evolution of the microscopic variables $(Z,z)$ is deterministic, but the initial datum is random. The coarse-graining map~$\Phi_n$ in~\eqref{eqdef:Phin-damped-oscillator} maps the invariant measure $\mu_{\beta,n,\calE_0}$ to a measure with density $\ee^{\calS_{n,\beta}(y)}\dd y$ for $y=(Q,P,e)$. The expression $\calS(Q,P,e) = \beta e$ arises as a rescaled version of $\calS_{n,\beta}(Q,P,e)$ in the limit $n\to\infty$, and its derivative appears in the \Generic SDE~\eqref{eq:damped-oscillator-generic-sde} in the drift term $K\rmD \calS$. 
\end{itemize}

The second form in which entropy drives evolutions is in deterministic systems, such as gradient flows and deterministic \generic systems. Here $\calS$ typically is the large-deviations rate functional of the invariant measures $\mu_n$ of a sequence of Markov processes. In the limit $n\to\infty$ these Markov processes converge to a deterministic limit, and the invariant measures $\mu_n$ become singular; $\calS$ characterizes the behaviour of $\mu_n$ in this limit. We have also seen a few examples of this:
\begin{itemize}
\item The functional $-\int \rho \log \rho - \int \rho V$ that drives the Fokker-Planck equation (Example~\ref{ex:JKO}) arises as the large-deviation rate function of independent Brownian particles. Here the coarse-graining map is the empirical-measure map (see Section~\ref{ss:relative-entropies-Sanov})
\[
(X_1,\dots X_n) \longmapsto \rho^n := \frac1n \sum_{i=1}^n \delta_{X_i}.
\]
\item Other functional expressions for the entropy arise from different stochastic processes, such as the hard-rod system, the Zero-Range Process, and the Brownian Energy Process (see Section~\ref{ss:interacting-particles}).
\item The limit $n\to\infty$ in the examples of Section~\ref{s:core-example} and Example~\ref{ex:damped-harmonic-oscillator} is a similar large-deviation limit, since the functionals $\calS_n$ and $\calS_{n,\beta}$ in those examples scale as $n\calS$ for some fixed $\calS$.
\end{itemize}

In both cases, the entropy \emph{drives} the evolution through the effect of noise, either at finite noise levels (the first form) or in a large-deviation limit of vanishing noise (the second form). The Onsager operator $K$ characterizes this driving and takes its form from the combination of the structure of the noise and the form of the coarse-graining map (see e.g.~Example~\ref{Ex:anisotropic-Wasserstein}).

\section{Discussion}

Let us summarize the answers to the questions that we have found. 

\paragraph{Question~\ref{q:what-is-entropy-from-modelling-perspective}: What does it mean to be called an `entropy'? How should one interpret this from a modelling perspective?}
The Basic answer of Section~\ref{ss:basic-answer} gives insight into this: Entropy is a characterization of the coarse-grained version of an underlying invariant measure. Consequently, in modelling terms, one needs to understand (or choose) the microscopic dynamics in order to be able to formulate an expression for the entropy.

\paragraph{Question~\ref{q:why-entropy-in-so-many-forms}: Why does entropy come in so many different functional forms?} 
The Basic answer of Section~\ref{ss:basic-answer} also gives insight into this: If entropy is a reflection of the invariant measure of some underlying system, then the variation in such underlying systems naturally gives rise to variation in the expressions for the entropy.

\paragraph{Question \ref{subq:increasing}: Does entropy always increase along an evolution?} For many scientists, it is `evident' that entropy increases along an evolution. What we see here is that the situation is slightly more subtle:
\begin{enumerate}
\item In the second interpretation of the Basic Answer of Section~\ref{ss:basic-answer}, entropy is the large-deviation rate function of a coarse-grained invariant measure, as discussed in Section~\ref{s:entropy-large-deviations}. In Section~\ref{ss:LDP-evolutions} we saw how this function may simultaneously be the driver of a gradient flow or a \generic evolution.  In both cases this structure automatically causes the entropy to be monotonic.
\item In the first interpretation, on the other hand,  entropy is the Lebesgue density of the push-forward of a microscopic invariant measure, as e.g.\ in~\eqref{l:CG-example-S_n} or~\eqref{eqdef:inv-meas-core-example-e-Sn}, or in~\eqref{eqdef:S-damped-oscillator}. Another way of saying the same thing is that the coarse-grained evolution~$Y$ has an invariant measure of the form $\nu (\rmd y) = \ee^{\calS(y)}\dd y$ (see~\eqref{eqdef:inv-meas-core-example-e-Sn}). Therefore $\calS(Y_t)$ will continue to fluctuate over time, and along this evolution $\calS$ definitely is not monotonic.  The theory of Stochastic Thermodynamics deals with these aspects in much more detail~\cite{Sekimoto10,Seifert12}.
\end{enumerate}
Of course there is a continuous transition between the two cases above, as we discussed in Remark~\ref{rem:deterministic-equations-from-combined-limits}. In the combined limit $n,\beta\to\infty$ the non-monotonic fluctuations in~$\calS_n$ become smaller and smaller, to completely vanish when $n=\infty$. For each finite $n$, however,~$\calS_n$ has non-monotonic fluctations, and there can be no  `second law of thermodynamics' at the level of the process $Y_t$. (From this point of view it seems strange that `violations of the second law' tend to create a stir (see e.g.~\cite{Gerstner02})).

\medskip
However, there is a twist:
\begin{enumerate}[resume]
\item Any $\phi$-relative entropy \emph{of the law $\rho_t$ of $Y_t$} with respect to $\nu$ is monotonic in time (see e.g.~\cite{Voigt81}). Therefore one can associate at least two entropies with a stochastic process such as $Y_t$ in Section~\ref{ss:setup-core-example}:
\begin{align*}
&\calS_n(y)  &&\text{defined in~\eqref{eqdef:CG-example-S}, which is not monotonic in time; }\\
&\overline \calS(\rho) := \calH_\phi(\rho|\nu) &&\text{defined in~\eqref{eqdef:RelEnt}, which is monotonic (non-increasing) in time. }
\end{align*}
In fact, the law $\rho_t$ is a solution of the corresponding Fokker-Planck equation, which is a deterministic \Generic equation in its own right (see e.g.~\cite[Sec.~1.2.5]{Oettinger05}).
\end{enumerate}

\paragraph{Question~\ref{q:modelling-interpretation-K}: Given that the operator $K$ characterizes {how} $\calS$ drives the evolution, what is the modelling interpretation of $K$?}
We have seen three ways in which $K$ arose in the coarse-graining, and in all three ways the operator $K$ embodies the size and type of the noise, even though the details vary.

In the Core example of Section~\ref{s:core-example}, by comparing the coarse-grained equation~\eqref{eq:CG-example-SDE-Y} for~$Y$ with the \Generic SDE described in Appendix~\ref{s:generic}, we can recognize an operator~$K$ as `multiplication with $1/\beta$', so that the drift term $(1/\beta) \calS_n'(Y_t)\, dt$ can be written as $K\calS_n'(Y_t)\, dt$.  (Note that since $Y$ is a one-dimensional variable, the operator $K$ maps $\R$ to~$\R$, and a linear operator from $\R$ to $\R$ is multiplication by a constant.) 

In Section~\ref{s:entropy-large-deviations} we saw that $K$ arises in quadratic dissipation potentials $\calR^*$ in large-deviation rate functionals. These dissipation potentials characterize the large-deviation behaviour of the noise, and this can be recognized  for instance in Example~\ref{Ex:anisotropic-Wasserstein}, where the anisotropy $\Sigma$ of the noise returns in the operator $K$~\eqref{eqdef:K-Sigma}. The connection with large-deviation theory also shows that the linearity of an operator $K$ is only a special case; in general one should consider $K$ a nonlinear operator of the form $
\xi \mapsto \partial_\xi \calR^*(z,\xi)$, for some~$\calR^*$ that is convex in $\xi$.

In Section~\ref{s:damped-pendulum} (see Remark~\ref{rem:Origin-of-Sigma-and-K}) we saw that the two sub-Systems A and B exchange energy; as the parameter $n$ tends to infinity, System B evolves faster and faster, and in the limit this energy exchange takes the form of white noise, whose origin can be traced back to the random initial datum of System B. The mobility of this noise can be traced back via the definition of $\kappa_n$ in~\eqref{eqdef:Yn-kappan} to the combination of the intrinsic evolution in System B (given by $J_B$) and the coupling between the two parts (given by $C_n$).

\paragraph{Question~\ref{q:interp-E-J}: what is the modelling interpretation of $\calE$ and $J$?}
For the specific case of Example~\ref{ex:damped-harmonic-oscillator}, we addressed this in Remark~\ref{rem:Origin-of-E-J}: $\calE$ is a renormalization of the total Hamiltonian energy of the microscopic system, and $J$ can be interpreted as a remapping of the System-A Poisson operator $J_A$ to the variables $(Z,e)$. It seems natural that this interpretation should hold much more widely: when a \Generic system derives from a Hamiltonian system by coarse-graining, the \Generic energy $\calE$ should be a renormalized version of the Hamiltonian energy. 

\paragraph{Question~\ref{q:unique-char-of-GENERIC}: Does the equation uniquely characterize $\calE$, $\calS$, $J$, and $K$?} We already saw in Example~\ref{ex:DSZ} that the answer is negative: the same evolution equation can have different gradient strucures, and the same holds for \Generic structures. See~\cite[Cor.~3.3.1]{Mielke16a} for the impact of this indeterminacy on evolutionary $\Gamma$-convergence.

\bigskip

\MARK{\cite{MladaSipkaPavelka24} refer to Jaynes~\cite{Jaynes67} for the claim ``If we see all the positions and momenta of the particles, there is no room for entropy, which measures our lack of knowledge about the actual state of the system''. Comment on this. }	

\MARK{Do we want to say anything about deterministic ergodic microdynamics? Even if only to say that I personally don't know enough about it?}

We now continue with some other remarks.

\paragraph{Other discussions of the interpretation of entropy.}
Every text on thermodynamics has some discussion of entropy and free energy, and in this section I will simply discuss a few that I think are significant. 

In Section~\ref{ss:basic-answer} we already touched on the idea that entropy characterizes the `volume of a region of microstates corresponding to a single macrostate', and sometimes this formulation is even used as a definition of entropy. A more precise version is given in~\eqref{l:CG-example-S_n-char2}, in which $\calS_n$ is characterized in terms of the total mass of the micro-canonical measure~$\pdelta$. 

Leli\`evre, Rousset, and Stoltz~\cite[Sec.~1.3]{LelievreRoussetStoltz10} give a careful motivation of their definitions. In particular, they discuss the role of the coarse-graining function $\Phi_n$ (which they call a `reaction coordinate' $\xi$), and their definition of `free energy'  is consistent with our~\eqref{l:CG-example-S_n-char2}. However, they do not deal with dynamics.

Evans~\cite{Evans01} and De Graaf~\cite{DeGraaf18TR} both address the question `what is entropy' from a strongly mathematical point of view, and both approach it probabilistically. Evans, however, is more interested in entropy as characterization of irreversibility and uncertainty than in its role in evolution equations, and De Graaf is motivated by understanding the geometric structure of thermodynamics.

Berdichevsky~\cite[(1.22)]{Berdichevsky97} focuses on defining  entropy for a Hamiltonian system, and gives a careful discussion and motivation of his choice. He requires that entropy be an `adiabatic invariant', and this requirement leads to the definition of entropy at energy level~$e$ to be the volume of the sublevel set $\{(q,p): H(q,p)\leq e\}$. This is different from the definition~\eqref{l:CG-example-S_n-char2}, which can be recognized as surface area of the level set $\{(q,p): H(q,p) = e\}$ equipped with an appropriate surface measure. Berdichevsky discusses these two definitions in his Section~2.11, and he remarks that in many cases the two definitions coincide in the limit of infinite degrees of freedom. 

\paragraph{Separation between system and heat bath.}
In this paper we considered the simplest microscopic precursor of a \Generic system, in which a finite-dimensional system is coupled to a $2n$-dimensional heat bath. In the limit $n\to\infty$ the heat bath has infinite energy and infinite heat capacity, leading to a fixed temperature $1/\beta$ of the heat bath and a heat-bath entropy of the form $\calS = \beta e$ as in~\eqref{eqdef:S-damped-oscillator}. In this setup only the bath is coarse-grained, and therefore all entropy is generated by the bath. 

However, \Generic is not limited to setups of this form; in most applications the `system' and the `heat bath' are the same object, for instance an interacting particle system, which typically is spatially extended. In the thermodynamical literature it is common to make an assumption of `local equilibrium', under which one can consider a complete heat bath to be present at each macroscopic spatial point, which equilibrates instantaneously. In this case both internal energy and entropy become local fields, depending on space and time, and one enters the realm of hydrodynamics. For extensive discussions about such systems, see e.g.~\cite{MullerRuggeri98,Oettinger05,PavelkaKlikaGrmela18}.

\paragraph{`Entropy production' and local detailed balance.}
Example~\ref{ex:damped-harmonic-oscillator} illustrates how removing a `heat bath' from a Hamiltonian system leads to a reduced system with an associated entropy---where the entropy is in fact the entropy of the heat bath.  Katz, Lebowitz, and Spohn~\cite{KatzLebowitzSpohn83,KatzLebowitzSpohn84} and later Maes, Neto\v cn\'y, and others~\cite{Maes99,MaesRedigVan-Moffaert00,Maes03,MaesNetocny03} developed the pair of concepts of \emph{local detailed balance} and \emph{entropy production} to capture  the structure of this type of open systems in much more generality, in particular including multiple `heat baths'.  In simple terms, a stochastic process satisfies local detailed balance if the ratio of forward to backward paths can be expressed in terms of a function  of those paths;  states; this change is then called the `entropy production'.

For systems with reversible dynamics (including possibly flipping the signs of velocities) this `entropy production' reduces to the change of an actual entropy, as described in this paper; see e.g.~\cite{Maes21}. For fundamentally non-equilibrium systems, such as those driven by multiple reservoirs, there does not seem to be a clearly defined entropy. This is also illustrated by the derivation in e.g.~\cite{Maes21} from multiple-reservoir systems, in which there is one entropy for each reservoir.

\MARK{Do we want to discuss why there is no assumption of the Compression Property here? Why we can make do with very weak requirements on the frequencies and such?}

\paragraph{Acknowledgements.}
I first learned about the interpretation~\eqref{l:CG-example-S_n-char1} of entropy and free energy from Tony Leli\`evre at the Lorentz Center in Leiden, and I am grateful to both for this. I am also very grateful to Victor Berdichevsky, Nino Dekkers, Jin Feng, Klaas Landsman, Christian Maes, Micha\l\ Pavelka,  Celia Reina Romo, and Andr\'e Schlichting for animated and enlightening discussions about entropy and other things. I also am very grateful for the comments in the `Wednesday morning session' in Eindhoven on this paper. 

This paper grew out of the collaboration with Alex Mielke and Johannes Zimmer on~\cite{MielkePeletierZimmer25}, and I am very grateful for all the good discussions that we had together in the course of that project. 

\appendix

\section{Notation}
\label{s:notation}

\subsection{Generalities}

In these notes we use $\rmD F$ for an unspecified derivative of a functional $F$ on a state space~$\bfZ$, which can be infinite-dimensional. Depending on the situation this could be either a Fr\'echet derivative or a gradient, i.e.\ the Riesz representative of that same Fr\'echet derivative. (This does not lead to inconsistencies, because all such derivatives are inputs to operators $J$ and $K$; the choice of the derivative representation forces a corresponding choice for these operators.) The symmetry and anti-symmetry properties of $J$ and $K$ are defined in terms of their behaviour as bilinear forms. We reserve the notation $\nabla$ for the usual gradients in $\R^d$.

\medskip

Recall that the pushforward $T_\# \mu$ of the measure $\mu\in \ProbMeas(\calX)$ under a map $T:\calX \to\calY$ is the measure defined by $T_\#\mu(A) = \mu(T^{-1}(A))$ for any set $A$, or equivalently by 
\begin{equation}
\label{eqdef:push-forward}
\int_\calY \varphi(y) \, T_\# \mu(\rmd y) = \int_\calX \varphi(T(x))\, \mu(\rmd x)
\qquad \text{for all measurable }\varphi: \calX\to\R.
\end{equation}
Yet again equivalently, if $\mu$ is the law of a random variable $X$,  then $T_\#\mu$ is the law of the random variable $T(X)$.

\subsection{Microcanonical measures}
\label{app:pdelta}

In Section~\ref{s:core-example} we introduced the `microcanonical' measure $\pdelta$ as the measure on $\R^n$ that is heurtistically defined for any $f:\R^n\to\R$ by
\begin{equation}
	\label{eqdef:pdelta-app}
\int_{\R^n} \varphi(x) \pdelta\pra[\big]{f(x) - a}(\rmd x) := 
  \lim_{h\downarrow 0} \frac1h \int_{\R^n} \varphi(x) \bONE\big\{a \leq f(x) < a+h\big\}\dd x
  \qquad \text{for all }\varphi\in C_c(\R^n).
\end{equation}
Leli\`evre, Rousset, and Stoltz give a more precise definition.
\begin{definition}[{\cite[(1.25)]{LelievreRoussetStoltz10}}]
  \label{def:pdelta}
Let $f:\R^n\to\R$ be continuous.
The measure $ \pdelta\pra*{f(x)-a}$ on $\R^n$ is defined by the property that for all $F\in C_c(\R^n)$ and $G\in C(\R)$, 
\begin{equation}
\label{eqdef:pdelta-LRS}
\int_\R G(a) \int_{\R^n} F(x) \pdelta\pra*{f(x)-a}(\rmd x) \dd a
= \int_{\R^n} G(f(x)) F(x) \dd x.
\end{equation}
\end{definition}
\noindent
The right-hand side above is finite for all $F\in C_c(\R^n)$ and $G\in C(\R)$,
since $\supp F$ is compact and $G\circ f$ therefore is bounded on this
support. This implies that $\pdelta\pra*{f(x)-a}(\rmd x) \dd a$ can be
considered a locally finite non-negative measure on $(x,a)\in \R^n\ti \R$. For
almost all $a\in \R$ the disintegration $\pdelta\pra*{f(x)-a}(\rmd x)$ is
then a locally finite non-negative measure on~$\R^n$.

When $f$ is regular and $\nabla f$ does not vanish, the co-area formula provides the alternative characterization of $\pdelta$ in terms of the Hausdorff measure as 
\begin{equation}
	\label{eq:pdelta-coarea}
\pdelta\pra[\big]{f(x) - a}(\rmd x) = \frac1{|\nabla f(x)|}\calH^{n-1}\Big|_{f^{-1}(a)}(\rmd x) \qquad \text{on }\R^n.
\end{equation}
This shows that the measure is concentrated on the level set, but not equal to the Hausdorff measure on that set; the density depends on the rate of change of $f$ normal to the level set. 

\begin{remark}
	Note that $\pdelta[f(x)-a]$ is a non-negative measure, but it has no reason to have unit mass. 
\end{remark}

The following lemma and its proof are given in~\cite[Lemma~B.2]{MielkePeletierZimmer25}. It gives a characterization of a specific type of products of $\pdelta$-measures.


\begin{lemma}[Products of $\pdelta$'s]
  \label{l:delta-times-delta}
Let $X$ and $Y$ be finite-dimensional spaces, and let $f\colon X\to\R$ and $g\colon Y\to \R$ be continuous. 
Set 
\[
T\colon X\ti Y \to X\ti Y \ti \R,
\qquad
(x,y) \mapsto (x,y,e := g(y)).
\]
For Lebesgue almost all $a\in \R$ we then have
\[
\bra[\Big]{T_\# \pdelta\pra*{f(x)+g(y)- a}} (\rmd x\dd y \dd e)
= \pdelta\pra*{f(x)-a+e}(\rmd x) \pdelta\pra*{g(y)-e}(\rmd y ) \dd e.
\]
Equivalently, for all $\varphi\in C_c(X\ti Y\ti \R)$ we have
\begin{multline}
\int_{X\ti Y} \varphi(x,y,g(y)) \pdelta\pra*{f(x)+g(y)- a} (\rmd x\rmd y)\\
= \int_{X\ti Y\ti \R} \varphi(x,y,e) \pdelta\pra*{f(x)-a+e}(\rmd x) \pdelta\pra*{g(y)-e}(\rmd y ) \dd e.
\label{eq:product-rule-for-deltas}
\end{multline}
\end{lemma}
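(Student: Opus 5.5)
The plan is to work with the weak formulation~\eqref{eqdef:pdelta-LRS} of Definition~\ref{def:pdelta} and test both sides against an additional function of $a$. We do not prove the identity for each fixed $a$; instead we prove it after integrating against $G(a)\dd a$ for arbitrary $G\in C(\R)$ (or $C_c(\R)$), and then use that these test functions separate locally finite measures. That gives the statement for almost every $a$. Concretely, fix $\varphi\in C_c(X\ti Y\ti\R)$ and $G\in C_c(\R)$. We show
\[
\int_\R G(a)\,\mathrm{LHS}(a)\dd a=\int_\R G(a)\,\mathrm{RHS}(a)\dd a,
\]
where $\mathrm{LHS}(a)$ and $\mathrm{RHS}(a)$ are the two sides of~\eqref{eq:product-rule-for-deltas}.

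For the left-hand side we apply~\eqref{eqdef:pdelta-LRS} on the space $X\ti Y$ with the function $(x,y)\mapsto f(x)+g(y)$. We take $F(x,y)=\varphi(x,y,g(y))$, which is continuous and compactly supported in $(x,y)$. This gives
\[
\int_\R G(a)\,\mathrm{LHS}(a)\dd a=\int_{X\ti Y}G\bigl(f(x)+g(y)\bigr)\,\varphi\bigl(x,y,g(y)\bigr)\dd x\dd y .
\]
For the right-hand side we first substitute $a=b+e$, so the $x$-measure becomes $\pdelta[f(x)-b]$. We then use~\eqref{eqdef:pdelta-LRS} on $X$ in the variable $b$, with test function $b\mapsto G(b+e)$ and $F=\varphi(\cdot,y,e)$; this turns the $x$-integral into $\int_X G(f(x)+e)\varphi(x,y,e)\dd x$. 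Next we apply~\eqref{eqdef:pdelta-LRS} on $Y$ in the variable $e$, with integrand $(y,e)\mapsto\int_X G(f(x)+e)\varphi(x,y,e)\dd x$. This collapses the $\pdelta[g(y)-e]\dd e$ integration by setting $e=g(y)$, and we recover exactly the same expression as for the left-hand side. Fubini is used throughout to reorder integrals.

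The main obstacle is that~\eqref{eqdef:pdelta-LRS} is stated only for products $G(a)F(x)$ with $F\in C_c$. In the last step the integrand depends jointly on $(y,e)$, and after the change of variables $G(b+e)$ depends on $b$ jointly with other variables. To handle this, I would first extend~\eqref{eqdef:pdelta-LRS} to general $\Psi\in C_c(\R^n\ti\R)$ in the form
\[
\int\!\!\int\Psi(x,a)\,\pdelta[f(x)-a](\rmd x)\dd a=\int\Psi(x,f(x))\dd x .
\]
This follows by approximating $\Psi$ uniformly on compacts by finite sums $\sum G_k(a)F_k(x)$ (Stone--Weierstrass), together with the local finiteness of $\pdelta[f-a]\dd a$ noted after Definition~\ref{def:pdelta}. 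With this joint version all three steps above are immediate; compact support of $\varphi$ and $G$ guarantees the integrability needed for Fubini. Finally, since both sides define locally finite measures in $(x,y,e,a)$ that agree on all such test functions, their disintegrations in $a$ agree for Lebesgue-a.e.\ $a$, simultaneously for a countable dense family of $\varphi$, hence for all $\varphi$.
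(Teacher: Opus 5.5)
Your proof is correct. The paper gives no argument of its own for this lemma; it refers to \cite[Lemma~B.2]{MielkePeletierZimmer25}. Your route is the natural proof from Definition~\ref{def:pdelta}: test against $G(a)\dd a$, reduce both sides via~\eqref{eqdef:pdelta-LRS} to $\int G\bigl(f(x)+g(y)\bigr)\varphi\bigl(x,y,g(y)\bigr)\dd x\dd y$, and then localize in $a$ using a countable dense family of $\varphi$.

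Two small remarks. First, the joint (non-product) extension of~\eqref{eqdef:pdelta-LRS} is genuinely needed only in the $(y,e)$-step. In the $x$-step you can freeze $(y,e)$ by Fubini, and then $b\mapsto G(b+e)$ and $x\mapsto\varphi(x,y,e)$ are already of product form. Second, that extension amounts to saying $\pdelta[f(x)-a](\rmd x)\dd a=(\mathrm{id},f)_\#\Lebesgue$. With this, both integrated sides are visibly the image of Lebesgue measure on $X\ti Y$ under $(x,y)\mapsto\bigl(x,y,g(y),f(x)+g(y)\bigr)$.
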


\section{\Generic systems and \Generic SDEs}
\label{s:generic}

\MARK{Add a better description of full GENERIC, not just GSDEs}

The name \emph{\Generic} was coined by Grmela and \"Ottinger~\cite{GrmelaOttinger97,OttingerGrmela97} and refers to a class of deterministic evolution equations that contain both dissipative (gradient-flow) and conservative (Hamiltonian) components. This class generalizes the earlier concept of \emph{metriplectic} evolutions~\cite{Kaufman84,Morrison84,Morrison86}. We briefly described this class of evolution equations in the introduction, and for general information about \Generic we refer to~\cite{Oettinger05,Mielke10TRa,PavelkaKlikaGrmela20}.

\paragraph{Deterministic \Generic.}

The deterministic version of \Generic was heuristically described in Section~\ref{se:central_question}, and can be formulated for evolutions in infinite-dimensional normed vector spaces $\bfZ$. The core components are two functionals $\calE,\calS:\bfZ\to \R$ and two duality operators $J$ and $K$. We assume that $\calE$ and $\calS$ are Fr\'echet differentiable, and we indicate the Fr\'echet derivative as $\rmD \calE$ and $\rmD\calS$. For each $z\in \bfZ$, $J(z)$ and $K(z)$ are bounded or unbounded operators $J(z),\, K(z): \bfZ^* \to \bfZ$, that are required to satisfy a number of conditions:
\begin{itemize}
\item The symmetry conditions
\begin{equation}
	\label{eq:JK-symmetry}
	J(z)^\top = -J(z)\qquad  \text{and} \qquad K(z)^\top = K(z)\geq0.
\end{equation}
Here the superscript $\top$ indicates the Banach dual, i.e.\ $J(z)^\top:\bfZ^*\to \bfZ$ is defined by the requirement that 
\[
{}_{\bfZ}\langle J(z)^\top \xi, \eta\rangle_{\bfZ^*}
= {}_{\bfZ^*}\langle \xi, J(z)\eta\rangle_{\bfZ}
\qquad\text{for all }\xi,\eta\in \dom{J(z)}.
\]
\item The \emph{non-interaction conditions}
\begin{equation}
	\label{eq:NIC}
	J(z)\rmD \calS(z) = 0 \qquad \text{and} \qquad K(z)\rmD \calE(z) = 0.
\end{equation}
\item The \emph{Jacobi identity}: The Poisson bracket generated by $J$, 
\[
\{ F,G\}(z) := {}_{\bfZ}\langle J(z)^\top \rmD F(z), \rmD G(z)\rangle_{\bfZ^*}
\]
should satisfy
\[
\{F,\{G,H\}\} + \{G,\{H,F\}\} + \{H,\{F,G\}\} = 0
\qquad\text{for all admissible } F,G,H:\bfZ\to\R.
\]
\end{itemize}
Under these conditions, the \Generic evolution equation is 
\[
	\dot z = J(z) \rmD \calE(z) + K(z) \rmD \calS(z).
\]

\paragraph{\Generic SDEs.}
Grmela and \"Ottinger also identified an extension that introduces randomness while preserving the main properties (`\generic with fluctuations'~\cite{GrmelaOttinger97,Oettinger05}). In this paper we describe a slight variation of this stochastic extension, that deals with changes of variables in a different way. To distinguish from `\generic with fluctations' we call such extensions `\Generic SDEs'. 

For \Generic SDEs we take the state space $\bfZ$ to be $\R^d$, and we write $\nabla F$ for the usual gradient of a function $F:\R^d\to\R$. We also use matrix notation, representing elements of~$\bfZ$ as column vectors and operators as matrices. With this choice, $J(z)^\top$ is the usual transpose of the matrix $J(z)$, and $\div K(z)$ is the column vector of divergences of the rows of $K(z)$.

\Generic SDEs are characterized by the five components $\bfZ$, $\calE$, $\calS$, $J$, and $K$ that we already discussed, together with two more components, $\Sigma$, and $a$. For each $z\in \bfZ$, 
$\Sigma(z)$ is a matrix satisfying the fluctuation-dissipation relation
\begin{equation}
	\label{eq:FDR-general}
\Sigma \Sigma^\top  = K.
\end{equation}
The function $a:\bfZ\to (0,\infty)$ is a positive density, which is required to satisfy the \emph{unimodularity} constraint with $J$~\cite{Weinstein97,PeletierSeri25TR},
\begin{equation}
\label{eq:unimodularity}
\div \pra[\big]{aJ}= 0.
\end{equation}

With these components, the \Generic SDE is
\begin{equation}
	\label{eq:GSDE-def}
\rmd z_t = \bra[\Big]{J(z_t)\nabla \calE(z_t) + K(z_t) \nabla \calS(z_t) + \frac1{ a(z_t)} \div \pra[\big] {aK}(z_t)}\dd t
+ \sqrt{2} \Sigma(z_t) \dd W_t.
\end{equation}
Here $W$ is a standard Brownian motion on $\R^d$.

\medskip

\begin{remark}[Geometric formulation]
For simplicity of the exposition we have given these \Generic SDEs in $\R^d$. A better geometric description places the evolution in the context of a manifold in which the components above have well-defined geometric meaning; we describe this in~\cite{PeletierSeri25TR}.
\end{remark}

\begin{remark}[Fokker-Planck equation and stationary measure]
By applying It\^o's lemma in the usual way one finds that the law $\rho_t\in \ProbMeas(\bfZ)$ of $z_t$ satisfies the Fokker-Planck equation
\begin{subequations}
\label{eq:G-FPeq}
\begin{align}
	\partial_t \rho_t + \div \pra[\big]{\rho_t{J\nabla \calE}}
	&= \div \pra*{\rho_t K \nabla \log \frac{\rho_t}{a\ee^{\calS}}}
	\label{eq:G-FPep-1}\\
	&= \div \pra[\big]{K \nabla \rho_t} - \div\pra[\big]{\rho_t K\bra*{\nabla \log a +  \nabla \calS}}.
\end{align}
\end{subequations}

\end{remark}

\begin{remark}[Almost-sure conservation of $\calE$]
In Section~\ref{se:central_question} we already described how in a deterministic \Generic equation the non-interaction conditions~\eqref{eq:NIC} lead to conservation of $\calE$. 
The setup of the \Generic SDE~\eqref{eq:GSDE-def} implies the same conservation of $\calE$, almost surely. This can be recognized by applying It\^o's lemma:
\begin{align*}
\rmd \calE(z_t) &= \nabla\calE(z_t)^\top   \bra[\Big]{J(z_t)\nabla\calE(z_t) + K(z_t)\nabla\calS(z_t)
+  \frac1{a(z_t)} \div\pra[\big]{a\Sigma\Sigma^\top}(z_t) }\dd t \\
&\qquad {} + \sqrt{2}\nabla\calE(z_t)^\top   \Sigma(z_t) \dd W_t 
  +  \nabla^2\calE(z_t)  \,{:}\, {\Sigma\Sigma^\top(z_t)}  \dd t \\
&\leftstackrel{(*)}=  \frac1{a(z_t)}\nabla\calE(z_t)^\top \div\pra[\big]{a\Sigma\Sigma^\top}(z_t) \dd t
 +  \nabla^2\calE(z_t)  \,{:}\, {\Sigma\Sigma^\top(z_t)}  \dd t \\
&\qquad {} +  \sqrt{2}\nabla\calE(z_t)^\top    \Sigma(z_t) \dd W_t \\
&=  \div \pra[\big]{\bra{\log a}\nabla\calE^\top \Sigma\Sigma^\top}(z_t) \dd t 
+ \sqrt{2}\nabla\calE(z_t)^\top   \Sigma(z_t) \dd W_t \\
&= 0.
\end{align*}
The identity $(*)$ follows from antisymmetry and the non-interaction conditions~\eqref{eq:NIC}, and the final two terms vanish because the range of $\Sigma$ is included in the range of $K$ by~\eqref{eq:FDR-general}, which  is orthogonal to $\nabla\calE$ for the same reason. 
\end{remark}

\begin{remark}[Stationary measures]
From the expression~\eqref{eq:G-FPep-1} one deduces that the measure $\mu(\rmd z) = \ee^{\calS(z)}a(z) \dd z$ is stationary for the flow.  In fact there are many more stationary measures: since $\calE$ is almost surely conserved by the SDE~\eqref{eq:GSDE-def}, any measure of the form $\tilde\mu(\rmd z) = f\bra[\big]{\calE(z)}\mu(\rmd z)= f\bra[\big]{\calE(z)}\ee^{\calS(z)}a(z) \dd z$ also is conserved. 
\end{remark}

\begin{remark}[No explicit temperature]
In the \Generic SDE~\eqref{eq:GSDE-def} the temperature parameter $\beta$ does not appear explicitly; this contrasts with many common formulations of randomly perturbed systems, in which the strength of the noise (here $\Sigma$) is parametrized by a temperature. 

Such an explicit-temperature formulation can be obtained by redefining $\calS$, and $K$ as 
\[
\wt\calS(z) := \frac1\beta \calS(z)
\qquad\text{and}\qquad
\wt K(z) := \beta K(z),
\]
leading to the equation
\[
\rmd z_t = \bra[\Big]{J(z_t)\nabla \calE(z_t) + \wt K(z_t) \nabla \wt\calS(z_t) + \frac1{ \beta a(z_t)} \div \pra[\big] {a\wt K}(z_t)}\dd t
+ \sqrt{\frac2\beta} \Sigma(z_t) \dd W_t.
\]
In terms of $\wt K$ the fluctuation-dissipation relation now reads as 
\[
\Sigma\Sigma^\top = \frac1\beta\wt K,
\]
and in this form one recognizes the parametrization of $\Sigma$ by temperature $1/\beta$. 

The explicit formulas for $\calS$ and $K$ in e.g.~\eqref{eq:Generic-ODE-damped-harmonic-oscillator} and the linear-heat-bath limit~\eqref{eq:l:CG-inv-meas-conv-Snbeta} both suggest that in this case $\wt\calS$ and $\wt K$ may be natural alternatives to $\calS$ and $K$. In more general situations, however, `temperature' is not a fixed parameter but the function $\partial \calS/\partial e$, and no such simple formula exists. 
\end{remark}

\begin{remark}[The unimodularity condition]
The unimodularity condition~\eqref{eq:unimodularity} is necessary for the stationarity of $e^{\calS(z)}a(z)\dd z$, and can be motivated from the fact that the Liouville measure is conserved in Hamiltonian dynamics; see~\cite{PeletierSeri25TR}.
\end{remark}

\section{Proof of Lemma~\ref{l:conv-HamSys}}
\label{app:proof-of-convergence}

In this section we establish Lemma~\ref{l:conv-HamSys} as a consequence of Lemma~\ref{l:conv-stoch-processes-app} below. We recall the setup. 

Fix $\beta,\gamma>0$, and let $C_n$ and $J_B$ be given as in Section~\ref{s:damped-pendulum}. For given $\zeta_{0n}\in \calH_n$ we define two $\R^2$-valued processes
\begin{equation}
	\label{eqdef:Yn-appendix}
Y_n(t) := C_n^* \ee^{tJ_B}\zeta_{0n}
\qquad \text{and}\qquad
B_n(t) := \sqrt{\frac\beta{2\gamma}}\int_0^t Y_n(s)\dd s.
\end{equation}
In addition we set 
\[
\kappa_n(t) := C_n^*\ee^{tJ_B}C_n.
\]

\begin{lemma}
	\label{l:conv-stoch-processes-app}
	Fix $\beta,\gamma>0$ and $e\in \R$, and assume for each $n$ that  $\zeta_{0n}$ is sampled from~$\nu_{\beta,n,e}$ in~\eqref{eqdef:nu-beta}. Then as $n\to\infty$, 
\begin{enumerate}
\item \label{l:conv-HamSys-appendix:i:Bn} $B_n$ converges in distribution in $C([0,T];\R^2)$ to a process
\[
B(t) = \begin{pmatrix}
	W(t)\\0
\end{pmatrix},
\]
where $W$ is a one-dimensional standard Brownian motion.
\item \label{l:conv-HamSys-appendix:i:Kn} 
The sequence $\kappa_n$ converges vaguely (i.e.\ in duality with $C_c$) to 
\begin{align}
2\gamma \begin{pmatrix} 1&0\\0&0 \end{pmatrix}
 \qquad & \text{on $\R$, and }
 \label{l:conv-HamSys-appendix:eq:Kn-on-R}\\[\jot]
\gamma \begin{pmatrix} 1&0\\0&0 \end{pmatrix}
 \qquad & \text{on $[0,\infty)$}.
 \label{l:conv-HamSys-appendix:eq:Kn-on-[0,infty)}
\end{align}
\end{enumerate}
\end{lemma}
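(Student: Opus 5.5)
The plan is to make everything explicit. The coupling only sees the $q$-block through $C_n$ and $C_n^*$, and $\ee^{tJ_B}$ acts on each pair $(q_j,p_j)$ as a rotation by angle $\omega_j t$. Writing $c:=\sqrt{2\gamma/\pi}$, a direct computation from~\eqref{eqdef:JB-heat-bath} and~\eqref{eqdef:C_n-choice} gives
\[
\kappa_n(t)=\begin{pmatrix} k_n(t) & 0\\ 0&0\end{pmatrix},\qquad k_n(t):=\frac{2\gamma}{\pi}\,\Delta\omega\sum_{j=1}^n\cos(\omega_j t),
\]
and
\[
Y_n(t)=\Bigl(c\,\Delta\omega\sum_{j=1}^n\bigl[\cos(\omega_jt)\,\zeta_{q,j}+\sin(\omega_jt)\,\zeta_{p,j}\bigr],\,0\Bigr).
\]
So the second components of both $B_n$ and $\kappa_n$ vanish identically, and everything reduces to scalar statements.

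\textbf{Part~\ref{l:conv-HamSys-appendix:i:Kn}.} For a test function $\varphi$ we have
\[
\int\varphi\,k_n\,\rmd t=\frac{2\gamma}{\pi}\,\Delta\omega\sum_{j=1}^n \Re\hat\varphi(\omega_j),
\]
with $\hat\varphi(\omega)=\int\varphi(t)\ee^{i\omega t}\dd t$. This is a Riemann sum with mesh $\Delta\omega\to0$ on $[0,n\Delta\omega]$, and $n\Delta\omega\to\infty$. For $\varphi\in C_c^\infty$, $\hat\varphi$ is Schwartz, so the sum converges to
\[
\frac{2\gamma}{\pi}\int_0^\infty\Re\hat\varphi=\frac{2\gamma}{2\pi}\int_\R\hat\varphi=2\gamma\varphi(0)
\]
by Fourier inversion; this gives~\eqref{l:conv-HamSys-appendix:eq:Kn-on-R}. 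Since $k_n$ is even, restricting to $[0,\infty)$, i.e.\ testing against $\varphi\bONE_{[0,\infty)}$ with $\varphi$ even after symmetrization, gives half the mass, $\gamma\varphi(0)$, which is~\eqref{l:conv-HamSys-appendix:eq:Kn-on-[0,infty)}. The main obstacle here is passing from $C_c^\infty$ to $C_c$ test functions: $k_n$ is a Dirichlet-type kernel without uniformly bounded total variation. I would first try to check whether the intended use (integration against $\dot Z$ in~\eqref{eq:Cz-CCz-RHS}) only needs smooth or $H^1$ test functions. If $C_c$ is genuinely required, I would instead work with the primitives $\int_0^t k_n$, which converge boundedly, and state the convergence in that sense.

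\textbf{Part~\ref{l:conv-HamSys-appendix:i:Bn}, Gaussian comparison.} I first replace the microcanonical $\zeta_{0n}$ by a Gaussian. Let $G\in\calH_n$ have i.i.d.\ coordinates $N(0,1/(\beta\Delta\omega))$. Then $\|G\|_{\calH_n}^2=\Delta\omega\sum G_i^2$ has mean $2n/\beta$, so by the law of large numbers $\|G\|^2_{\calH_n}/(2n/\beta)\to1$. By rotational invariance, $\zeta_{0n}\stackrel{d}{=}rG/\|G\|_{\calH_n}$ with $r=\sqrt{2n/\beta+2e}$, where the factor $r/\|G\|_{\calH_n}\to1$ in probability. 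Since $B_n$ is linear in $\zeta_{0n}$, Slutsky reduces the claim to the Gaussian process $B_n^G$.

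\textbf{Part~\ref{l:conv-HamSys-appendix:i:Bn}, convergence of $B_n^G$.} The process $B_n^G$ is centred Gaussian with
\[
\bbE\bigl[Y_n^G(t)_1\,Y_n^G(s)_1\bigr]=\frac1\beta k_n(t-s).
\]
Hence
\[
\bbE\bigl[B^G_n(t)B^G_n(s)\bigr]=\frac1{2\gamma}\int_0^t\!\!\int_0^s k_n(u-v)\dd u\dd v\to\min(t,s),
\]
using Part~\ref{l:conv-HamSys-appendix:i:Kn}; here the test function is an indicator, so I would compute directly via the Riemann sum of $\frac{2\gamma}{\pi}\bigl(\sin\omega t\,\sin\omega s+(1-\cos\omega t)(1-\cos\omega s)\bigr)/\omega^2$, which is integrable. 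Gaussian finite-dimensional distributions then converge to those of $W$.

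For tightness I use Kolmogorov's criterion. For $h=t-s$,
\[
\bbE|B^G_n(t)-B^G_n(s)|^2=\frac{2}{\pi}\,\Delta\omega\sum_j\frac{1-\cos\omega_jh}{\omega_j^2},
\]
and bounding $(1-\cos\omega h)/\omega^2\le\min(h^2/2,\,2/\omega^2)$ gives at most $C(h+\Delta\omega\,h^2)\le C_T h$. Gaussianity then gives $\bbE|\cdot|^4\le 3C_T^2h^2$, which yields tightness in $C([0,T];\R^2)$ and completes the proof.
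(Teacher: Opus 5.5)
Your argument is correct. Part~(ii) is the paper's argument: Fourier transform, Riemann sum, inversion, and evenness of $k_n$. Your worry about $C_c$ is justified rather than pedantic. On $[-1,1]$ one has $k_n(t)\approx \frac{2\gamma}{\pi}\sin\bigl((n+\tfrac12)\Delta\omega\,t\bigr)/t$, so $\|k_n\|_{L^1(-1,1)}$ grows like $\log(n\Delta\omega)$. Banach--Steinhaus then yields $\varphi\in C_c$ for which $\int\varphi k_n$ is unbounded, so the lemma is only true for smoother test functions.

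The paper's proof likewise only tests against Schwartz functions, and on $[0,\infty)$ only against even ones. Your phrase ``even after symmetrization'' has the same looseness: the even extension of a smooth $\varphi$ on $[0,\infty)$ with $\varphi'(0)\neq0$ is merely Lipschitz, and its Fourier transform is not Schwartz. Your fallback is the right fix. The primitive $K_n(t):=\int_0^t k_n=\frac{2\gamma}{\pi}\sum_{j=1}^n \sin(j\Delta\omega\, t)/j$ is bounded uniformly in $n$ and $t$ and tends to $\gamma\,\mathrm{sgn}(t)$. A single integration by parts then gives $2\gamma\varphi(0)$ on $\R$ and $\gamma\varphi(0)$ on $[0,\infty)$ for every $\varphi\in C^1_c$, with no symmetrization needed.

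For Part~(i) your route is genuinely different. The paper stays with the uniform measure on the sphere. It gets finite-dimensional marginals from the Diaconis--Freedman projection theorem applied to $(\zeta_{0n},\alpha_n)_{\calH_n}$, after an unshown computation of $\|\alpha_n\|_{\calH_n}^2$ (including the vanishing of cross terms between increments). It gets time regularity from sub-Gaussian bounds for the sphere, fed into the Gikhman--Skorokhod criterion.

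You instead replace the sphere by the Gaussian $G$, with variance $1/(\beta\Delta\omega)$ per coordinate; this is exactly the canonical measure $\propto \ee^{-\beta H_B}$ of the bath. You write $\zeta_{0n}=rG/\|G\|_{\calH_n}$ and remove the radius by Slutsky. This is valid because $B_n$ is linear in $\zeta_{0n}$, $r/\|G\|_{\calH_n}\to1$ in probability, and $\|B^G_n\|_\infty$ is tight. What remains is an exactly Gaussian process. Convergence of finite-dimensional distributions then reduces to one explicit Riemann sum giving $\min(t,s)$, and the fourth-moment Kolmogorov bound is automatic.

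Your approach is a self-contained, hands-on version of the equivalence of ensembles that the paper imports as a black box. It also makes visible that the noise covariance $\beta^{-1}k_n(t-s)$ is the friction kernel itself, i.e.\ the fluctuation--dissipation relation behind the white-noise limit. The paper's version keeps the microcanonical measure throughout and is shorter on the page. The price is reliance on two external results and a computation left to the reader.
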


\begin{proof}
By integrating~\eqref{eq:evol-eq-heat-bath} we find that the solution $(q,p)(t) = \ee^{tJ_B}(q^0,p^0)$ starting at some $(q^0,p^0)$ of the pure heat-bath evolution is given by
\[
q_j(t) = q_j^0 \cos \omega_j t + p_j^0 \sin \omega_j t
\qquad\text{and}\qquad 
p_j(t) = -q_j^0 \sin \omega_j t + p_j^0 \cos \omega_j t .
\]
It follows that for this same starting point $\zeta_{0n} = (q^0,p^0)$, the process $Y_n$ in~\eqref{eqdef:Yn-appendix} satisfies 
\begin{align*}
Y_n(t) &= \bra[\bigg]{\sqrt{\frac{2\gamma}{\pi}}\Delta \omega \sum_{j=1}^n \pra[\Big]{q_j^0 \cos \omega_j t + p_j^0 \sin \omega_j t},\, 0 \, },\\
\noalign{\noindent and writing $W_n$ for the first coordinate of $B_n$ we have }
W_n(t) &= \Delta \omega \sqrt{\frac\beta\pi}\sum_{j=1}^n \frac1{\omega_j}\pra[\Big]{q_j^0 \sin \omega_j t + p_j^0 (1-\cos \omega_j t)}.
\end{align*}
By similar calculations we also find for any $(Q,P)^\top\in \R^2$
\[
\kappa_n(t)\begin{pmatrix} Q\\P \end{pmatrix} := C_n^* \ee^{tJ_B} C_n \begin{pmatrix} Q\\P \end{pmatrix}
= \bra[\bigg]{Q\,\frac{2\gamma}{\pi}  \Delta \omega \sum_{j=1}^n \cos \omega_j t\;, \; 0 \; }^\top.
\]

We now first prove part~\ref{l:conv-HamSys-appendix:i:Kn} of the Lemma.
For a function $\psi$ in the Schwartz space $\scrS(\R)$ of smooth rapidly decaying functions we have the Fourier forward and backward transform identities (where the first is a definition, the second a property)
\[
\wh\psi(b) := \int_\R \ee^{-i ab} \psi(a) \dd a
\qquad\text{and}\qquad 
\psi(a) = \frac1{2\pi}\int_\R \ee^{i ab} \wh \psi(b) \dd b.
\]
Fix $\varphi \in \scrS(\R)$, and note that since $\varphi$ is real-valued we have $\wh \varphi(-\omega) = \overline{\wh \varphi(\omega)}$. We calculate
\begin{align*}
\int_\R \varphi(t) \Delta \omega \sum_{j=1}^n \cos (\omega_j t) \dd t
&= \Delta \omega \sum_{j=1}^n  \Re \wh\varphi(\omega_j)\\
&\longrightarrow \int_0^\infty \Re \wh \varphi(\omega)\dd \omega \qquad \text{as }n\to\infty\\
&= \frac12 \int_\R \wh\varphi(\omega) \dd \omega  
= \pi \varphi(0).
\end{align*}
This proves~\eqref{l:conv-HamSys-appendix:eq:Kn-on-R}. The corresponding result~\eqref{l:conv-HamSys-appendix:eq:Kn-on-[0,infty)} on $[0,\infty)$ is proved  by noting that $\kappa_n$ is even, and that therefore for even functions $\varphi\in \scrS(\R)$ we have 
\[
\int_0^\infty \varphi(t) \kappa_n(t)\dd t
= \frac12 \int_\R \varphi(t) \kappa_n(t)\dd t
\xrightarrow{n\to\infty}
\frac{c^2\pi}2 \varphi(0)\begin{pmatrix} 1&0\\0&0 \end{pmatrix}.
\]

\medskip

We now prove part~\ref{l:conv-HamSys-appendix:i:Bn}. By~\cite[p.~450]{GikhmanSkorokhod69} it is sufficient to show convergence in distribution of all finite-time marginals of $W_n$ (the first coordinate of $B_n$) to those of a standard Brownian motion $W$, together with control of time variation. 

To show convergence of finite-time marginals, fix $m\in \N$ and $0\leq t_0 \leq \dots \leq t_m$, and choose any $a_1,\dots,a_m\in\R$. We show that the scalar random variable
\[
A_n := \sum_{i=1}^m a_i(W_n(t_i)-W_n(t_{i-1}))
\]
converges in distribution to a centered scalar Gaussian random variable with variance 
\[
\sum_{i=1}^m a_i^2 (t_i-t_{i-1}).
\]
This implies that all finite-time  marginals of $W_n$ converge in distribution to multidimensional Gaussian random variables with the same covariance matrix as the marginals of~$W$.

To show the convergence of $A_n$, note that $A_n$ can be written as 
\[
A_n = (\zeta_{0n},\alpha_n)_{\calH_n}, 
\]
where $\zeta_{0n} = (q^0_j,p^0_j)_{j=1}^n\sim \nu_{\beta,n,e}$ and 
\[
\alpha_{n,j} = \sqrt{\frac\beta\pi } \sum_{i=1}^m \frac{a_i}{\omega_j} \bra[\Big]{(\sin\omega_j t_i - \sin\omega_j t_{i-1}),\, (-\cos \omega_j t_i + \cos \omega_j t_{i-1})},
\qquad j=1,\dots,n.
\]
By lengthy but elementary calculations, using the fact that $\Delta\omega\to0$ and $n\Delta \omega\to\infty$,  we find that
\begin{align*}
\|\alpha_n\|_{\calH_n}^2 
&\longrightarrow \sum_{i=1}^m a_i^2 (t_i-t_{i-1}),
\qquad\text{as }n\to\infty.
\end{align*}
The random variable $\zeta_{0n}$ is drawn from a uniform measure on a sphere in the $2n$-dimensional space $\calH_n$ with
\[
\|\zeta_{0n}\|_{\calH_n}^2 = n\bra[\Big]{2 -\frac{2e}n}.
\]
By standard equivalence-of-ensemble results (e.g.~\cite{DiaconisFreedman87}) it follows that the one-dimensional projections $(\zeta_{0n},\alpha_n)_{\calH_n}$ converge in total variation (and therefore in distribution) to a centered Gaussian random variable with variance
\[
\lim_{n\to\infty} \frac1{\dim\calH_n} \|\zeta_{0n}\|_{\calH_n}^2 \|\alpha_n\|_{\calH_n}^2
= 
 \sum_{i=1}^m a_i^2 (t_i-t_{i-1}),
\]
as claimed. 

\medskip

Finally, to show control of time variation, take any $0\leq t_0< t_1$ and set 
\[
X_n := W_n(t_1) - W_n(t_0) = (\zeta_{0n},\alpha_n)_{\calH_n},
\]
where we use the same $\alpha_n$ as above but with just two times $t_0$ and $t_1$. 
By e.g.~\cite[Th.~3.4.6]{Vershynin18}, $\zeta_{0n}$ is a sub-Gaussian random variable in $\R^{2n}$ with a bound  $\|\zeta_{0n}\|_{\psi_2}\leq C\Delta\omega^{-1/2}$ with a constant $C$ that is independent of $n$ (see~\cite{Vershynin18} for the definition of the $\psi_2$-norm). By definition of sub-Gaussian random variables, the one-dimensional reduction $(\zeta_{0n},\alpha_n)_{\calH_n} = \Delta\omega (\zeta_{0n},\alpha_n)_{\R^{2n}}$ is again sub-Gaussian, with $\psi_2$-norm bounded by 
\[
\|X_n\|_{\psi_2}
= \Delta \omega \|(\zeta_{0n},\alpha_n)_{\R^{2n}}\|_{\psi_2}
\leq \Delta\omega \|\alpha_n\|_{\R^{2n}} \|\zeta_{0n}\|_{\psi_2}
\leq C \Delta\omega^{1/2} \|\alpha_n\|_{\R^{2n}}.
\]
Since we have  shown above that 
\[
\Delta\omega \|\alpha_n\|_{\R^{2n}}^2 = \|\alpha_n\|_{\calH_n}^2\;\xrightarrow{n\to\infty} \;\beta(t_1-t_0),
\]
it follows that $\|X_n\|_{\psi_2}$ is asymptotically bounded by $C \sqrt{t_1-t_0}$. 

By Proposition~2.5.2.(ii) of \cite{Vershynin18} it then follows that for all $p\geq 1$ we have 
\[
\limsup_{n\to\infty} \Expectation \|X_n\|_{\R^{2n}}^p \leq 
C_p \limsup_{n\to\infty} \|X_n\|_{\psi_2}^p
\leq C_p' |t_1-t_0|^{p/2},
\]
for some constants $C_p$, $C_p'$ independent of $n$. This proves the time control that allows us to apply~\cite[p.~450]{GikhmanSkorokhod69}. 

\bigskip
We conclude that the sequence $W_n$ of processes converges in distribution on $C([0,T])$ to $W$. This concludes the proof of the lemma. 
\end{proof}

\bibliographystyle{alphainitials}
\bibliography{ref}

\end{document}